\def \Rtbt {{\R^{2\times 2}}}
\newcommand{\R}{{\mathbb R}}
\newcommand{\Rnxn}{{\R^{n\times n}}}
\newcommand{\bmat}[1]{ \begin{bmatrix}#1\end{bmatrix}}
\newcommand{\smat}[1]{ \left[\begin{smallmatrix} #1 \end{smallmatrix}\right]}
\newcommand{\cont}{{\mathcal C}}
\newcommand{\diag}{\operatorname{diag}}
\newcommand{\Rn}{{\R^n}}
\newcommand{\abs}[1]{\left| #1 \right|}
\newcommand{\itext}[1]{{\qquad\text{#1}\qquad}}
\renewcommand{\ss}{\scriptstyle}
\def\sddots{\mathinner{\raise3pt\vbox{\hbox{$\ss .$}}
    \raise1.5pt\hbox{$\ss .$}\hbox{$\ss .$}}}
\let\hat\widehat
\theoremstyle{plain}
\newtheorem{thm}{Theorem}[section]
\newtheorem{lem}[thm]{Lemma}
\newtheorem{cor}[thm]{Corollary}
\theoremstyle{definition}
\newtheorem{rem}[thm]{Remark}
\newtheorem{rems}[thm]{Remarks}
\newtheorem{defn}[thm]{Definition}
\newtheorem{exm}[thm]{Example}
\newcounter{algo}[section]
\renewcommand{\thealgo}{\thesection.\arabic{algo}}
\newcommand{\algo}[3]{\refstepcounter{algo}
\begin{center}\begin{figure*}[h!]
\framebox[\textwidth]{
\parbox{0.95\textwidth} {\vspace{\topsep}
{\bf Algorithm \thealgo : #2}\label{#1}\\
\vspace*{-\topsep} \mbox{ }\\
{#3} \vspace{\topsep} }}
\end{figure*}\end{center}}
\begin{document}

\title{Decompositions and coalescing eigenvalues of \\ symmetric definite pencils
depending on parameters}
\author[Dieci]{Luca Dieci}
\address{School of Mathematics, Georgia Institute of Technology,
Atlanta, GA 30332 U.S.A.}
\email{dieci@math.gatech.edu}
\author[Papini]{Alessandra Papini}
\address{Dept. of Industrial Engineering, Univ. of Florence, viale
G. Morgagni 40-44, 50134 Florence, Italy}
\email{alessandra.papini@unifi.it}
\author[Pugliese]{Alessandro Pugliese}
\address{Dept. of Mathematics, Univ. of Bari ``A. Moro,''
Via Orabona 4, 70125 Italy}
\email{alessandro.pugliese@uniba.it}
\subjclass{15A18, 15A23, 65F15, 65F99, 65P30.}

\keywords{Coalescing eigenvalues, generalized eigenvalue problem, conical intersections}

\begin{abstract}
In this work, we consider symmetric positive definite pencils depending on two
parameters.  That is, we are concerned with the generalized eigenvalue problem
$A(x)-\lambda B(x)$,
where $A$ and $B$ are symmetric matrix valued functions in $\Rnxn$,
smoothly depending on parameters $x\in \Omega\subset \R^2$;
further, $B$ is also positive definite.   %It is well understood that,
In general, the eigenvalues of this multiparameter problem will not be smooth, the lack
of smoothness resulting from eigenvalues being equal at some parameter values (conical
intersections).   We first give general theoretical results on the smoothness of eigenvalues
and eigenvectors for the present generalized eigenvalue problem, and hence for the
corresponding projections, and then 
%Indeed, our specific goal in the present work is to 
perform a numerical study of the statistical properties of coalescing eigenvalues for
pencils where $A$ and $B$ are either full or banded, for several bandwidths.
%Unlike the more standard GOE ensemble, 
Our numerical study will be performed with respect to a 
random matrix ensemble which respects %is more faithful to 
the underlying engineering problems motivating our study.
\end{abstract}

\begin{flushright} \small Version of \today $, \,\,\,$ \xxivtime \end{flushright}

\maketitle

\pagestyle{myheadings}
\thispagestyle{plain}
\markboth{Dieci, Papini and Pugliese}{Coalescing eigenvalues of definite pencils}

\section{Introduction}\label{sec1}
An important and well studied problem in structural engineering is the second order problem
\begin{equation}\label{MEstatic}
M\ddot y+D\dot y+Cy\ = \ F(t)\ ,
\end{equation}
where the matrices $M,\ D,\ C\ \in \Rnxn$ are all symmetric and positive definite and typically arise
from finite element discretization of beams' structures and the like, and
a main interest of mechanical engineering is to study the way a structure responds to
specific external solicitations (the forcing $F$ above), in particular to solicitations
taking place at specified sinusoidal forcing;  e.g., see \cite{SWF:WavePerLatt}.

There are at least two outstanding difficulties in dealing with \eqref{MEstatic}.
The first is that the problem typically depends, smoothly, on one or more parameters
(that is, the matrices $M,D,K$, do), and this parameter dependence should be accounted for,
both in the development of algorithms and in the theoretical implications of the dependence itself.
The second difficulty is that typically $n\gg 1$ and
directly dealing with the 2-nd order problem \eqref{MEstatic} is not computationally feasible.
For this reason, a widely adopted technique consists in {\bf projecting} \eqref{MEstatic} onto
a subspace spanned by a restricted set of eigenvectors $v$ of the generalized (frictionless) eigenproblem
\begin{equation}\label{MEstaticGenEv}
(M-\lambda C) \ v\ = \ 0\ .
\end{equation}
For example, in \cite{Soize3}
the projection is taken with respect to the $m$ eigenvectors $v_1,\dots, v_m$, associated
to the $m$ smallest eigenvalues of \eqref{MEstaticGenEv}, with $m\ll n$.  That is, writing
$V=\bmat{v_1 \dots v_m}\in \R^{n\times m}$, instead of \eqref{MEstatic} one considers
\begin{equation}\label{MEstatic2}
\tilde M \ddot z+\tilde D\dot z+\tilde C z\ = \ \tilde F(t)\ ,
\end{equation}
where $y=Vz$, $z\in \R^m$, and $\tilde M=V^TMV$, $\tilde D =V^TDV$, $\tilde C=V^TCV$, and recall that
the eigenvectors $V$ can be chosen so that $\tilde M$ and $\tilde C$ are diagonal.

\begin{rem}
We are interested in carrying out the above plan in the parameter dependent case; more precisely,
we will consider the eigendecomposition of the
generalized eigenproblem \eqref{MEstaticGenEv} when the matrices depend on two parameters.  See below.  
The motivation, as above, 
is being able to perform a dimension reduction by projecting into a desired dominant eigenspace.  However,
in this case, there is another very important aspect to consider: in general, the projection is well defined
only if there is a gap between the eigenvalues associated to the eigenspace onto which we are projecting, and the
other eigenvalues.  (Note that this difficulty is true regardless of the smoothness of the function of eigenvectors.)
For this reason, our {\bf specific emphasis} will be to locate parameter values where the
eigenvalues coalesce: these are (and will be) called {\bf conical intersections} and are the parameter values
where the projection is not well defined.
\end{rem}

A plan of our paper follows.  In the remainder of this introduction, we review basic results on theory and
techniques for the {\emph{static}} generalized eigenproblem (that is, when the given matrices do not depend
on parameters).  Section \ref{Theory} contains smoothness and periodicty results for the parameter dependent case.
In particular, we give smoothness results about square roots (and Cholesky factors) when the matrices are smooth functions of
several parameters, and specialized results when they are analytic function of one real parameter.  We give a
block-diagonalization result, and also
discuss the codimension of having equal eigenvalues, and finally give some periodicity results for the
1-parameter case.  All of these results are needed for the development in Section \ref{CIs} about detection
of parameter values where the eigenvalues of the generalized eigenproblem coalesce.  In Section \ref{algos},
we discuss algorithmic development for detection of coalescing eigenvalues.  Finally, 
in Section \ref{NumExs} we give a collection of results on locating conical intersections of 
random functions ensambles, and give evidence on the power law distribution in terms of the size
of the problem.  The concern of how to build appropriate random models is addressed as well.
Throughout this work, the norm is always the 2-norm.

\subsection{Model problem and eigenvalues continuity}
Motivated by the discussion above, in particular \eqref{MEstatic2}, 
the basic problem we will consider is the following:
\begin{equation}\label{GenEv}
\left[A(x)-\lambda B(x)\right]  v \ = \ 0\ ,
\end{equation}
where $x\in \R^p$ represents $p$ parameters varying in an open and connected subset $\Omega$ of $\R^p$,
and we are interested in the cases of $p=1,2$.  The functions $A$ and $B$ in $\Rnxn$ will always be
symmetric and $B$ will also be positive definite, a fact that we will indicate with $B\succ 0$.
Moreover, both $A$ and $B$ will be $\cont^k$
functions of the parameters with $k\ge 1$; for the case of one parameter, we will also give some
results in the case of $A$ and $B$ being real analytic functions of the parameter.

We recall that the eigenvalues of \eqref{GenEv} are roots of the characteristic polynomial
$$\det(A-\lambda B)\ = \ 0\ .$$
Note that the leading coefficient of this polynomial is given by
$\det(B)$ and thus it is not $0$, since $B$ is positive definite, and therefore the
polynomial is of exact degree $n$.
As a consequence, there are $n$ eigenvalues of \eqref{GenEv}
and it is well known (see below) that they are all real.  Furthermore, 
since the coefficients of the polynomial $\det(A-\lambda B)$ are as smooth as the entries of $A$
and $B$ and the roots of a polynomial of (exact) degree $n$ depend continuously on the coefficients,
then we observe that the $n$ eigenvalues of
\eqref{GenEv} can be labeled so to be {\bf continuous} functions of the parameter $x$.
In particular, we can label them in decreasing order:
$\lambda_1(x) \ge \lambda_2(x) \ge \dots \ge \lambda_n(x)$.

Numerical methods for the ``static'' problem (that is, when $A,B\in \Rnxn$ are given
constant matrices, not depending on parameters), $A=A^T,\ B=B^T\succ 0$ (cfr.
\eqref{MEstaticGenEv}) are quite well developed; see \cite{Tisseur} for a review.
In essence, the standard techniques pass either through taking the square root of $B$ or its
Cholesky factorization, the
latter technique being the method of choice in the numerical community (e.g., it is the
method implemented in {\tt Matlab}).  For convenience, we review these below for this static
problem:
\begin{equation}\label{StaticGenEv}
\left[A-\lambda B \right]  v \ = \ 0\ .
\end{equation}

\begin{itemize}
\item[(a)] {\emph{Square root}}.
It is always possible to reduce the problem \eqref{StaticGenEv} to
a standard eigenvalue problem, as follows:
$$(A-\mu B)v=0\iff B^{1/2}\bigl(B^{-1/2}AB^{-1/2}-\mu I\bigr) B^{1/2}v=0 $$
$$ \qquad \iff (\tilde A-\mu I)w=0\ ,\, w=B^{1/2}v\ ,$$
where $B^{1/2}$ is the unique symmetric positive definite square root of $B$,
and $\tilde A=B^{-1/2}AB^{-1/2}$.  
Clearly, from the eigenvalues/eigenvectors of this last problem, we can get those
of \eqref{StaticGenEv}.  Since $\tilde A=\tilde A^T$, we note that the eigenvalues of
\eqref{StaticGenEv} (and those of \eqref{GenEv} for any given value of $x$)
are real, as previously stated.
\item[(b)] {\emph{Cholesky}}.
Similarly, since $B$ is positive definite, it admits a Cholesky factorization
\begin{equation}\label{Cholesky}
B=LL^T\ ,\,\, L\quad \text{lower triangular} \ , 
\end{equation}
from which it is immediate to obtain
$$(A-\mu B)v=0\iff L \bigl(L^{-1}AL^{-T}-\mu I\bigr) L^T v=0 $$
$$ \qquad \iff (\tilde A-\mu I)w=0\ ,\, \tilde A=L^{-1}AL^{-T}\ ,\, w=L^{T}v\ ,$$
and again from the eigenvalues/eigenvectors of this last problem, we can get those
of \eqref{StaticGenEv}.  We note that the Cholesky factor is not unique, but it can be made
unique by fixing the signs of $L_{ii}$, the standard choice being $L_{ii}>0$.
In this work, we will always restrict to this choice $L_{ii}>0$.
\end{itemize}

The following simple result will come in handy later on.
\begin{lem}\label{UniqueEvectors}
For \eqref{StaticGenEv},
the eigenvector matrix $V\in \Rnxn$, $V=[v_1,\dots, v_n]$,
can be chosen so to satisfy the relation
\begin{equation}\label{B-ortho}
V^TBV\ = \ I\ .
\end{equation}
If the eigenvalues $\lambda_i$'s are distinct, then, for a given ordering of the eigenvalues,
the matrix $V$ in \eqref{B-ortho} is unique up to the sign of its columns.
\end{lem}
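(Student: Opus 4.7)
The plan is to pass to the equivalent standard symmetric eigenvalue problem already reviewed in item (a) preceding the lemma, and then transport the spectral decomposition back to the generalized setting.

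First, using the symmetric positive definite square root $B^{1/2}$, set $\tilde A = B^{-1/2} A B^{-1/2}$. Since $\tilde A = \tilde A^T$, the spectral theorem gives an orthonormal basis $w_1,\dots,w_n$ of $\R^n$ with $\tilde A w_i = \lambda_i w_i$; collecting these as columns of $W$ we have $W^T W = I$. Define $v_i = B^{-1/2} w_i$ and $V = [v_1,\dots,v_n]$. Then $(A-\lambda_i B)v_i = B^{1/2}(\tilde A - \lambda_i I)w_i = 0$, so the $v_i$ are generalized eigenvectors, and
\[
V^T B V \;=\; W^T B^{-1/2} B\, B^{-1/2} W \;=\; W^T W \;=\; I,
\]
which is \eqref{B-ortho}.

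For the uniqueness statement, suppose the eigenvalues are distinct and fix an ordering $\lambda_1 > \lambda_2 > \cdots > \lambda_n$. The relation $V^TBV=I$ forces $v_i^T B v_i = 1$ for each $i$, so once the direction of $v_i$ is determined, its sign is the only remaining freedom. To see the direction is determined, note that for any eigenvectors $v_i,v_j$ with $i\ne j$,
\[
\lambda_i\, v_j^T B v_i \;=\; v_j^T A v_i \;=\; (A v_j)^T v_i \;=\; \lambda_j\, v_j^T B v_i,
\]
so $(\lambda_i-\lambda_j)v_j^T B v_i = 0$ and hence $v_j^T B v_i = 0$; thus the generalized eigenspaces for distinct eigenvalues are automatically $B$-orthogonal. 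Moreover, each eigenspace is one-dimensional: a second independent eigenvector for $\lambda_i$ would produce, via the $B^{1/2}$-transform, a two-dimensional eigenspace of the symmetric matrix $\tilde A$ for $\lambda_i$, contradicting simplicity of the spectrum of $\tilde A$ (which coincides with that of the pencil).

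Consequently each $v_i$ is determined up to a nonzero scalar by the eigenvalue equation, and the normalization $v_i^T B v_i = 1$ fixes $|v_i|$, leaving only the choice of sign. No step looks like a real obstacle; the only point requiring care is making sure the $B$-orthogonality conditions off the diagonal are not an additional constraint but a consequence of distinctness, as shown above.
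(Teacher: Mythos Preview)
Your proof is correct and follows essentially the same approach as the paper: reduce via $B^{1/2}$ to the symmetric problem $\tilde A=B^{-1/2}AB^{-1/2}$, take an orthonormal eigenbasis $W$, and set $V=B^{-1/2}W$. For uniqueness the paper simply invokes the known fact that, for distinct eigenvalues and a fixed ordering, the orthogonal $W$ is unique up to column signs and then transports this back via $V=B^{-1/2}W$, whereas you verify the same conclusion directly at the pencil level (automatic $B$-orthogonality of distinct eigenspaces, one-dimensionality, and the normalization $v_i^TBv_i=1$); this is a minor presentational difference, not a different route.
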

\begin{proof}
Regardless of having used the square root of $B$ or its Choleski factor,
we saw that $(A-\lambda B)v=0\iff  (\tilde A-\mu I)w=0$, with $\tilde A= B^{-1/2}AB^{-1/2}$ or 
$\tilde A= L^{-1}AL^{-T}$.
Since $\tilde A=\tilde A^T$, then $\tilde A$ has an orthogonal matrix of eigenvectors
$W$: $W^TW=I$, and thus $V=B^{-1/2}W$, or $V=L^{-T}W$, satisfies $V^TBV=I$.  
In case the eigenvalues $\lambda_i$'s are
distinct, then it is well understood that, for given ordering of the eigenvalues, orthogonal $W$ is unique up to
the sign of its columns; that is, if $W_1$ and $W_0$ are two orthogonal matrices giving the same ordered
eigendecomposition of $\tilde A$, we must have $W_1=W_0D$ with $D=\bmat{ \pm 1 & & \\  & \ddots & \\  & & \pm 1 }$.
Therefore, %calling  $V_{0,1}=B^{-1/2}W_{0,1}$, 
we will also have $V_1=V_0D$.
\end{proof}
\begin{cor}
With the notation of the proof of Lemma \ref{UniqueEvectors}, we also have
\begin{equation*}%\label{Unique2}
V_1^TBV_0=D \iff V_1=V_0D\ . 
\end{equation*}
\end{cor}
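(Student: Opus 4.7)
The plan is to deduce both directions of the equivalence directly from Lemma \ref{UniqueEvectors} together with the $B$-orthonormality relation \eqref{B-ortho}, using only the fact that the relevant sign matrix $D=\mathrm{diag}(\pm1,\dots,\pm1)$ satisfies $D^T=D$ and $D^2=I$.

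For the direction $(\Leftarrow)$, I would start from the assumed equality $V_1=V_0 D$ and multiply on the left by $V_0^T B$. Using $V_0^TBV_0=I$ from \eqref{B-ortho}, one gets $V_0^TBV_1=D$, and transposing (and using $D^T=D$) yields $V_1^TBV_0=D$. So this direction is purely a one-line computation.

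For the direction $(\Rightarrow)$, the idea is to invoke Lemma \ref{UniqueEvectors} to say that, since $V_0$ and $V_1$ are two $B$-orthonormal eigenvector matrices giving the same ordered generalized eigendecomposition (with distinct eigenvalues, under the hypothesis of the lemma), there must exist some sign diagonal $D'$ with $V_1=V_0 D'$. Substituting this into $V_1^TBV_0$ gives $(D')^T V_0^TBV_0 = D'$; matching this with the assumed value $D$ forces $D'=D$, and hence $V_1=V_0D$.

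The only mild subtlety, rather than any real obstacle, is to make explicit that the hypothesis of distinct eigenvalues is inherited from Lemma \ref{UniqueEvectors} (the whole statement lives in that setting), so that the lemma can be applied to produce the auxiliary sign diagonal $D'$ in the first place; everything else is immediate from $V_0^TBV_0=I$ and the involutive character of sign diagonals.
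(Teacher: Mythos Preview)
Your proof is correct. The $(\Leftarrow)$ direction is identical to the paper's. For $(\Rightarrow)$, however, the paper takes a more elementary route: from $V_0^TBV_0=I$ one has $V_0^{-1}=V_0^TB$, so $V_1^TBV_0=D$ gives $V_1^T=DV_0^{-1}B^{-1}=DV_0^T$, hence $V_1=V_0D$ directly. This uses only invertibility of $V_0$ and $B$, and in particular does not require the distinct-eigenvalue hypothesis at all. Your argument instead appeals to the uniqueness statement of Lemma~\ref{UniqueEvectors} to first produce an auxiliary sign matrix $D'$ with $V_1=V_0D'$, and then identifies $D'=D$; this is perfectly valid in the stated setting, but it is slightly less direct and imports the distinctness assumption where the paper's purely algebraic computation does not need it.
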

\begin{proof}
($\Leftarrow$) Since $V_0^TBV_0=I$, then obviously $DV_0^TBV_0=D$.\\
($\Rightarrow$) From $V_0^TBV_0=I$ and $V_1^TBV_0=D$, given invertibility of $V_0$ and $B$,
we get $V_1=V_0D$ at once.
\end{proof}

\vspace{.5cm}

\section{A collection of smoothness results}\label{Theory}

Here we give several results for the generalized eigenvalue problem \eqref{GenEv} that extend known
results from the standard eigenvalue problem (that is, $B=I$ in \eqref{GenEv}):
\begin{equation}\label{StandardEv}
\left[A(x)-\lambda I)\right]  v \ = \ 0\ .
\end{equation}
It is well known (e.g., see \cite{Kato, DieciEirola, DiPu1}) that even for this standard eigenvalue problem 
\eqref{StandardEv} the
eigenvalues/eigenvectors cannot be expected to inherit smoothness of $A$, unless eigenvalues are distinct.
In the case of 2 parameters, in general there is a total loss of smoothness
when the eigenvalues coalesce (e.g., take $A=\bmat{x_1 & x_2 \\ x_2 & -x_1}$), and even in 
the 1 parameter case there is a potential loss of smoothness of the eigenvectors when eigenvalues
coalesce.
Our goal in this section is to generalize these, and similar, results, for \eqref{GenEv}. 

At a high level, one may argue that our results follow from the fact that $B$ (being positive definite)
induces an inner product, and hence a related concept of orthogonality; see Definition \ref{Bortho};
and, as a consequence, with the appropriate modifications
with respect to this inner product, many results from the standard case (symmetric eigenproblem
and Euclidean inner product) should follow.
Yet, these ``modifications'' are both non-trivial and of theoretical interest; moreover, our results have
practical engineering implications (e.g., see the discussion on smoothness of projection in the Introduction).
Indeed, given the relevance in Engineering applications of the generalized eigenproblem, we believe that our study is
both needed and timely.
%important to collect these results in a systematic way, results that we have not found in the literature.

\begin{defn}[$B$-orthogonality]\label{Bortho}
Let $B\in\cont^k(\R^p,\R)$ be a symmetric positive definite matrix valued function of $p$ parameters.
Two vector valued functions $v(x),w(x)\in \Rn$, are called {\emph{$B$-orthogonal}} if
$v^TBw=0$ and further {\emph{$B$-orthonormal}} if $v^TBv=1$ and $w^TBw=1$, for all $x$.
\end{defn}

\subsection{Square root}\label{SmoothRoot}

Before proceeding, we point out the following simple but important result,
whose proof we give for completeness.  % henceforth will be tacitly assumed.
\begin{lem}\label{SqRootFtn}
Let $a\in \cont^k(\Omega, \R)$, $k\ge 0$ an integer,
be a strictly positive function of $p$ real parameters $x\in \Omega$, where
$\Omega$ is an open, bounded, and connected subset of $\R^p$, and let $a$ be continuous
and uniformly bounded in $\bar \Omega$: $a(x)< \alpha$, $\alpha$ finite, $\forall x\in \bar \Omega$.
Then, the function $\sqrt{a(x)}$, where $\sqrt{a(x)}$ is the unique positive
square root of $a$, is also a $\cont^k$ function of $x$.
Furthermore, if $a\in \cont^\omega(J, \R)$ is analytic in the parameter $x\in J$, 
where $J$ is an open and bounded interval of the real line, then so is its square root.
\end{lem}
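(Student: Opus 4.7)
The plan is to realize $\sqrt{a(x)}$ as a composition and then invoke standard differentiability results for composite functions. Let $s : (0,\infty)\to (0,\infty)$ be the scalar function $s(t)=\sqrt{t}$. Since $s'(t)=\tfrac{1}{2\sqrt{t}}$, and more generally the $j$-th derivative of $s$ is a constant multiple of $t^{1/2-j}$, the function $s$ is $\cont^\infty$ on $(0,\infty)$; in fact, expanding $\sqrt{t_0+h}=\sqrt{t_0}\,\sqrt{1+h/t_0}$ and using the binomial series gives a convergent power series on $|h|<t_0$, so $s$ is real analytic on $(0,\infty)$. Writing $\sqrt{a(x)}=s(a(x))$, the whole question reduces to composition.

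For the $\cont^k$ statement I would argue locally. Fix $x_0\in\Omega$. Since $a$ is continuous and strictly positive, there is a neighborhood $U\subset\Omega$ of $x_0$ and a constant $c>0$ with $a(x)\ge c$ on $U$; together with the hypothesis $a(x)<\alpha$ this confines $a(U)$ to the compact interval $[c,\alpha]\subset(0,\infty)$, on which $s$ is $\cont^\infty$. Applying Faà di Bruno's formula (or, for small $k$, the ordinary chain rule iteratively) to $s\circ a$ with $a\in\cont^k(U)$ and $s\in\cont^\infty([c,\alpha])$ yields $s\circ a\in\cont^k(U)$. Since $x_0$ was arbitrary, $\sqrt{a}\in\cont^k(\Omega)$.

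For the analytic case with $p=1$ and $x\in J$, the argument is the same but uses the fact that the composition of real analytic functions is real analytic. Concretely, fix $x_0\in J$ with $a(x_0)=t_0>0$; on a neighborhood of $x_0$ in $J$ the function $a$ is given by a convergent power series in $x-x_0$, and on a neighborhood of $t_0$ the function $s$ is given by a convergent power series in $t-t_0$. Substituting one series into the other (and using $a(x_0)=t_0$ so that the constant term of $a(x)-t_0$ vanishes) produces a convergent power series for $s(a(x))=\sqrt{a(x)}$ about $x_0$, so $\sqrt{a}\in\cont^\omega(J)$.

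I do not expect any real obstacle: the only thing worth being careful about is that the composition argument requires $a$ to stay away from $0$ (where $s$ fails to be smooth), and this is handled by continuity plus strict positivity, which furnishes a local positive lower bound for $a$ on a neighborhood of each point. The global upper bound $\alpha$ in the hypothesis is only used to keep $a(U)$ inside a compact subinterval of $(0,\infty)$, which is convenient but not essential for the conclusion.
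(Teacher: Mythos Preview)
Your argument is correct. Both parts reduce $\sqrt{a(x)}$ to the composition $s\circ a$ with $s(t)=\sqrt{t}$, and your treatment of each case is standard and complete.

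The paper takes a different route, however. For the $\cont^k$ statement the paper does not argue via the chain rule at all: it invokes its own Theorem~\ref{smoothness} on the smoothness of the positive definite square root of a matrix function, applied to the $1\times 1$ case. That theorem is proved by first establishing continuity of the square root through the binomial-type series \eqref{Series}, and then obtaining each successive partial derivative as the unique solution of a Lyapunov equation $S_{x_i}S+SS_{x_i}=B_{x_i}$. So the paper's logic runs from the matrix case down to the scalar one, whereas you go directly. Your approach is shorter and more elementary for the scalar lemma itself; the paper's approach has the advantage that it is the same machinery used later for the genuine matrix square root, so no separate scalar argument is needed. For the analytic statement, both proofs appeal to closure of $\cont^\omega$ under composition, but the paper verifies analyticity of $t\mapsto\sqrt{t}$ on a bounded positive interval by writing $\sqrt{t}=\sqrt{\alpha}\,e^{\frac{1}{2}\ln(t/\alpha)}$ and expanding the logarithm, whereas you use the binomial series for $\sqrt{1+h/t_0}$ directly. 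Your version is again more direct; the paper's is a bit more circuitous but reaches the same conclusion. Finally, your closing remark is apt: the upper bound $\alpha$ plays no essential role in the $\cont^k$ argument, and indeed the paper uses it only in the series-based continuity proof (Lemma~\ref{continuity}) to guarantee uniform convergence.
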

\begin{proof}
The $\cont^k$-result follows from Theorem \ref{smoothness} below.

For the case of analytic function of one parameter, we recall that composition of two analytic
functions is analytic, and thus what we need to prove is that the function $\sqrt{x}$ is analytic,
for $x$ in some interval $(c,b)$ with $0<c<b<\infty$.
But this follows from the following argument.
\begin{itemize}
\item[(i)]
Write $\displaystyle{\sqrt{x}=\sqrt{\alpha}\ \sqrt{x/\alpha}=\sqrt{\alpha} e^{\frac{1}{2}\ln (x/\alpha)}}$.
\item[(ii)]
Now, let $y=x/\alpha$, where $\alpha>b$, so that $0<y<1$.  Then, we have
$$\ln (x/\alpha)\ = \ 2 \sum_{j=0}^\infty \frac{1}{2j+1}\left[\frac{(y-1)}{(y+1)}\right]^{2j+1}\ .$$
This series converges for $y>0$, which is the case.  Moreover, the power series of $1/(y+1)$ is
$$\frac{1}{y+1}\ = \ \sum_{j=0}^\infty (-1)^j y^j\ ,$$
and this series converges for $y<1$, which also holds true.
\item[(iii)]  Putting together
the expressions in points (i) and (ii), we obtain a series expansion of $\ln (x/\alpha)$ and hence obtain
its analyticity.  The end result then follows since the exponential is also an analytic function.
\end{itemize}
\end{proof}

Next, we first observe that it is easy to infer that the Cholesky factor of $B$ is as smooth as $B$ itself
(for functions of one parameter, this result, and the argument of proof are known; see \cite{ChernDieci}).
\begin{thm}\label{SmoothChol}
Let $B\in \cont^k(\Omega,\Rnxn)$ be symmetric positive definite for all $x\in \Omega$.  Then, 
its Cholesky factor $L$ in \eqref{Cholesky} with $L_{ii}>0$
is also a $\cont^k$ function for $x\in \Omega$.
Further, if $B\in \cont^\omega(J, \Rnxn)$ is analytic in the parameter $x\in J$, 
where $J$ is an open and bounded interval of the real line, then so is the
Cholesky factor.
\end{thm}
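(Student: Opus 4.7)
The plan is to invoke the standard recursive Cholesky algorithm, expressing each entry of $L$ as a finite combination of polynomial operations on entries of $B$ and previously computed entries of $L$, together with one positive-square-root extraction per column, and then induct on the column index $j$. For $j = 1, \dots, n$, the recursion reads
\begin{equation*}
L_{jj} \;=\; \sqrt{B_{jj} - \sum_{k=1}^{j-1} L_{jk}^2}\;, \qquad L_{ij} \;=\; \frac{1}{L_{jj}}\Bigl(B_{ij} - \sum_{k=1}^{j-1} L_{ik} L_{jk}\Bigr)\;, \quad i>j\;.
\end{equation*}
Every operation here preserves $\cont^k$ (resp.\ analytic) regularity provided that (a) each radicand is a strictly positive smooth function, so that Lemma \ref{SqRootFtn} applies, and (b) each $L_{jj}$ is non-vanishing, so that the subsequent divisions are legitimate.

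The key positivity observation is that comparing $j$-th leading principal minors in $B = LL^T$ gives
\begin{equation*}
\det(B_j) \;=\; \prod_{k=1}^{j} L_{kk}^2\;, \qquad \text{so} \qquad B_{jj} - \sum_{k=1}^{j-1} L_{jk}^2 \;=\; L_{jj}^2 \;=\; \frac{\det(B_j)}{\det(B_{j-1})}\;,
\end{equation*}
(with $B_j$ the $j\times j$ leading principal submatrix of $B$ and the convention $\det(B_0) := 1$), and this ratio is strictly positive throughout $\Omega$ by Sylvester's criterion applied to $B \succ 0$. The induction on $j$ is then routine: the base case is $L_{11} = \sqrt{B_{11}}$, which is $\cont^k$ (resp.\ analytic) by Lemma \ref{SqRootFtn}, while $L_{i1} = B_{i1}/L_{11}$ inherits the regularity since $L_{11}$ does not vanish. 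In the inductive step, assuming columns $1, \dots, j-1$ of $L$ are $\cont^k$ (resp.\ analytic), the radicand for $L_{jj}$ is a polynomial combination of such functions, hence itself $\cont^k$ (resp.\ analytic) and strictly positive; Lemma \ref{SqRootFtn} then gives $L_{jj} \in \cont^k$ (resp.\ analytic) with $L_{jj}>0$, and the off-diagonal entries $L_{ij}$ for $i>j$ inherit the regularity as quotients by a non-vanishing smooth function.

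The only real subtlety lies in the hypotheses of Lemma \ref{SqRootFtn}, which as stated require the base domain to be open, bounded, connected, with the function continuous and bounded on the closure. Since both $\cont^k$-regularity and analyticity are local properties, I would cover $\Omega$ (or the interval $J$) by relatively compact open connected subsets on which those hypotheses hold, apply the lemma on each, and patch the results together. I do not expect any genuine obstacle here; the argument is essentially the one used in the one-parameter case cited in the theorem, adapted to several parameters by exploiting the local character of smoothness and analyticity.
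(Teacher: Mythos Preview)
Your proof is correct and follows essentially the same approach as the paper's: both induct via the recursive Cholesky algorithm and invoke Lemma~\ref{SqRootFtn} for the square-root extraction at each stage. The only difference is presentational---the paper phrases the induction in block form, peeling off the first row and column and recursing on the Schur complement $\hat B - cc^T/b_{11}$ (whose positive definiteness is automatic), whereas you spell out the entry-wise formulas and justify positivity of each radicand via Sylvester's criterion on leading principal minors.
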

\begin{proof}
The proof is immediate.  Write $B=\bmat{b_{11} & c^T \\ c & \hat B}$ and let
$L_1=\bmat{\sqrt{b_{11}} & 0 \\ c/\sqrt{b_{11}} & I}$, so that $L_1^{-1}BL_1^{-T}=\bmat{1 & 0 \\ 0 & B_1}$, where
$B_1=\hat B - cc^T/b_{11}$.  Obviously, $B_1$ is symmetric, positive definite, and as smooth as $B$, and
the result follows using Lemma \ref{SqRootFtn}.
The analytic case also follows in the same way since in this case $B_1$ and $\sqrt{b_{11}}$ are analytic.
\end{proof}

The next question is if the square root $B^{1/2}$ is also a $\cont^k$, respectively $\cont^\omega$,
function. Below, we prove that the answer is yes.  

We begin by showing that a symmetric positive definite function $B$, smoothly depending 
on parameters $x\in \R^p$, has a unique symmetric positive definite square root $S$,
which depends continuously on $x$.  We note that
continuity of the square root function can be inferred from the general result 
\cite[Proposition 2.1]{vanHemmenAndo}, but for completeness we give a different and 
more constructive proof.

\begin{lem}\label{continuity}
Let $B\in \cont^k(\Omega,\Rnxn)$, where $k\ge 1$, where $\Omega$ is an open, bounded, connected subset of 
$\R^p$, $p\ge 1$, and let $B\in \cont(\bar \Omega, \Rnxn)$.
Further, let $B$ be symmetric positive definite, and uniformly bounded, for all $x \in \bar \Omega$:
$\sup_{x\in \bar \Omega}\|B(x)\| < \gamma $.
Then, there exists, unique, a symmetric positive definite square root $S(x)$, for any $x \in \Omega$.
Moreover, $S(x)$ is a continuous function of $x$.
\end{lem}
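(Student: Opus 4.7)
My plan is to establish the two claims separately: first, pointwise existence and uniqueness of the symmetric positive definite square root $S(x)$ at each $x \in \Omega$, and then continuity of $x \mapsto S(x)$. The pointwise part is classical. At each fixed $x$, diagonalize $B(x) = Q\Lambda Q^T$ with $\Lambda = \diag(\lambda_1,\dots,\lambda_n)$, all $\lambda_i > 0$, and set $S(x) = Q\,\diag(\sqrt{\lambda_1},\dots,\sqrt{\lambda_n})\,Q^T$. For uniqueness, if $T = T^T \succ 0$ satisfies $T^2 = B(x)$, then $T$ commutes with $B(x) = T^2$, hence preserves each eigenspace of $B(x)$; on the eigenspace for eigenvalue $\lambda$, $T$ is symmetric positive definite with $T^2 = \lambda I$, forcing $T = \sqrt{\lambda}\,I$ there, and so $T = S(x)$.

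The delicate part is continuity, because the eigenvector matrix $Q(x)$ need not depend continuously on $x$ at parameter values where eigenvalues coalesce, so the spectral formula above cannot be used directly. My plan is to represent $S(x)$ instead by a matrix power series in the entries of $B(x)$ that converges \emph{uniformly} on $\bar\Omega$. First, since $B \in \cont(\bar\Omega,\Rnxn)$ is symmetric positive definite on the compact set $\bar\Omega$, its smallest eigenvalue $\lambda_n(x)$ is a continuous, strictly positive function on $\bar\Omega$ and thus attains a positive minimum $\delta$. Hence the spectrum of $B(x)$ lies in $[\delta,\gamma]$ uniformly in $x\in\bar\Omega$. Fix any $\alpha > \gamma$ and set $C(x) = B(x)/\alpha$ and $T(x) = I - C(x)$; since $T(x) = T(x)^T$ has spectrum in $[1-\gamma/\alpha,\,1-\delta/\alpha]$, we have the uniform bound $\|T(x)\| \le 1 - \delta/\alpha =: q < 1$.

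Now apply the binomial series
\[
\sqrt{C(x)} \;=\; \sum_{k=0}^\infty (-1)^k \binom{1/2}{k} T(x)^k,
\]
which converges in the operator norm and uniformly in $x\in\bar\Omega$ because $\|T(x)^k\| \le q^k$ and $\sum_k \bigl|\binom{1/2}{k}\bigr|\,q^k < \infty$. Each partial sum is a matrix-valued polynomial in the entries of $B(x)$, hence continuous in $x$, and uniform convergence preserves continuity, so the sum is continuous in $x$. A Cauchy-product calculation with the scalar identity $(1-t)^{1/2}\cdot(1-t)^{1/2} = 1-t$ gives $\sqrt{C(x)}^2 = C(x)$, while symmetry and positive definiteness follow by applying the scalar series on each eigenspace of $C(x)$. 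Setting $S(x) = \sqrt{\alpha}\,\sqrt{C(x)}$ produces a continuous symmetric positive definite square root of $B(x)$, which by the uniqueness proved above coincides with the pointwise-defined square root.

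The main obstacle in this plan is the uniform convergence of the series: it would fail without the uniform lower bound $\delta > 0$ on the spectrum of $B$, which is exactly why the hypotheses that $\bar\Omega$ is compact and that $B$ extends continuously and remains positive definite on $\bar\Omega$ are essential.
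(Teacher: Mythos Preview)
Your proof is correct and follows essentially the same strategy as the paper: rescale $B$ so that $I$ minus the rescaled matrix has norm strictly less than $1$ uniformly on $\bar\Omega$, expand the square root via the binomial (Taylor) series, and deduce continuity from uniform convergence via the Weierstrass $M$-test. Your write-up is in fact a bit more careful than the paper's in two respects: you explicitly justify the uniform bound $\|T(x)\|\le q<1$ by invoking compactness of $\bar\Omega$ to obtain a uniform lower spectral bound $\delta>0$ (the paper simply asserts the analogous inequality $y<1$), and you give an explicit argument for pointwise uniqueness of the positive definite square root.
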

\begin{proof}
We begin by scaling $B$ so that it will have norm less than $1$.  Namely, we define the function
$C(x)=\frac{B(x)}{\gamma}$, with $\gamma >0$, so that $\|C\|<1$ in $\bar \Omega$;
note that $C$ is positive definite 
and as smooth as $B$.   Then, we consider the function
$$I+(C-I)\equiv I+Y\,, \,\, Y:=C-I\ .$$
Observe that $Y$ is also symmetric, but negative definite and its eigenvalues are $-1+\mu$, where $\mu$ are
the eigenvalues of $C$.  Therefore, the eigenvalues of $Y$ are all in $(-1,0)$, and thus those of
$I+Y$ are all positive, and $I+Y$ has a unique positive definite square root for any given value of the parameters $x$.

Next, consider the following series expansion:
\begin{equation}\label{Series}
\left[I+Y\right]^{1/2}\ = \ \ I-Y/2 +Y^2/8 +\sum_{n=3}^\infty (-1)^n Y^n \frac{(2n-3)!}{n!(n-2)! 2^{2n-2}}\ ,
\end{equation}
and observe that all terms are symmetric, and smooth functions of the parameters.  Further, for any given
parameters value $x$, and any associated (unit) eigenvector $v$ of $Y$ so that $v^TYv=\nu\in (-1,0)$, one gets
$v^T\left[I+Y\right]^{1/2}v=\sqrt{1+\nu}$ as defined by the right hand side of \eqref{Series} with $\nu$ replacing
$Y$ there.  Therefore, the right hand side defines a positive definite matrix, for any given value of the parameters.

Now, let $y=\max_{\bar \Omega}\|Y\|$, and note that this gives $0<y<1$, and consider the numerical series
$$ 1+ y/2 + y^2/8 +\sum_{n=3}^\infty y^n \frac{(2n-3)!}{n!(n-2)!2^{2n-2}}\ .$$
By the ratio test, this numerical series converges if $y<1$, which is the case.  Therefore, 
from the Weierstrass M-test, we conclude that the series in \eqref{Series} converges uniformly.
As a consequence, the sum of the series is a continuous function of the parameters.
Finally, we observe that $(I+Y)^{1/2}=C^{1/2}$, and from $C(x)=\frac{A(x)}{\gamma}$,
we get that also $A^{1/2}=\sqrt{\gamma} {C}^{1/2}$ is a continuous function of the parameters.
\end{proof}

Using Lemma \ref{continuity}, we can get the result on smoothness.

\begin{thm}\label{smoothness}
With the same notation as in Lemma \ref{continuity}, 
the unique positive definite square root $B^{1/2}$ of the positive definite function $B\in \cont^k(\Omega, \Rnxn)$
is also a $\cont^k$ function. 
\end{thm}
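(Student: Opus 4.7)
The plan is to upgrade the continuity result of Lemma \ref{continuity} to $\cont^k$ smoothness via the implicit function theorem applied to the squaring map on symmetric positive definite matrices. Continuity is the essential input, because it lets us certify that the local smooth branch supplied by the implicit function theorem coincides with the global continuous branch $B^{1/2}(x)$.

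First I would fix an arbitrary $x_0 \in \Omega$, set $B_0 = B(x_0)$, $S_0 = B_0^{1/2}$ (which exists and is unique by Lemma \ref{continuity}), and consider the polynomial map
\begin{equation*}
F : \mathrm{Sym}_n \times \mathrm{Sym}_n \to \mathrm{Sym}_n, \qquad F(S,B) = S^2 - B,
\end{equation*}
where $\mathrm{Sym}_n$ denotes the real vector space of symmetric $n\times n$ matrices. Since $F$ is polynomial in the entries of $(S,B)$, it is $\cont^\infty$ (in fact analytic). A direct computation shows that the partial Fr\'echet derivative with respect to $S$ at $(S_0, B_0)$ is the Lyapunov-type operator
\begin{equation*}
L_{S_0} : \mathrm{Sym}_n \to \mathrm{Sym}_n, \qquad L_{S_0}(H) = S_0 H + H S_0.
\end{equation*}

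The second step is to verify that $L_{S_0}$ is a linear isomorphism of $\mathrm{Sym}_n$. This is the key algebraic fact: diagonalize $S_0 = Q \diag(\sigma_1, \dots, \sigma_n) Q^T$ with $\sigma_i > 0$, and observe that in the rotated coordinates $\tilde H = Q^T H Q$, the operator acts as $(\tilde L \tilde H)_{ij} = (\sigma_i + \sigma_j) \tilde H_{ij}$. Since each eigenvalue $\sigma_i + \sigma_j$ is strictly positive, $L_{S_0}$ is invertible. By the implicit function theorem, there exist open neighborhoods $\mathcal{U}$ of $S_0$ in $\mathrm{Sym}_n$ and $\mathcal{V}$ of $B_0$ in $\mathrm{Sym}_n$, and a $\cont^\infty$ map $\Phi : \mathcal{V} \to \mathcal{U}$, such that for $(S,B) \in \mathcal{U}\times \mathcal{V}$ the equation $S^2 = B$ is equivalent to $S = \Phi(B)$. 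Shrinking $\mathcal{U}$, we may additionally assume every $S \in \mathcal{U}$ is positive definite.

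For the third step, I would use continuity of $B^{1/2}(\cdot)$ from Lemma \ref{continuity} to pick a neighborhood $W$ of $x_0$ in $\Omega$ with $B(x) \in \mathcal{V}$ and $B^{1/2}(x) \in \mathcal{U}$ for all $x \in W$. On $W$, both $B^{1/2}(x)$ and $\Phi(B(x))$ are positive definite square roots of $B(x)$ lying in $\mathcal{U}$, so by the local uniqueness part of the implicit function theorem they coincide: $B^{1/2}(x) = \Phi(B(x))$ on $W$. As a composition of a $\cont^\infty$ map $\Phi$ with the $\cont^k$ function $B$, the square root is therefore $\cont^k$ on $W$, and since $x_0 \in \Omega$ was arbitrary, $B^{1/2} \in \cont^k(\Omega, \Rnxn)$.

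The main obstacle I foresee is the bookkeeping in the last step: one must ensure that the \emph{globally} defined continuous positive definite square root furnished by Lemma \ref{continuity} actually lies in the local smooth branch produced by the implicit function theorem, and this is precisely where the continuity hypothesis is used in an essential way. (A by-product of the same argument, together with the analytic implicit function theorem and the analytic version of Lemma \ref{continuity}, yields the analogous $\cont^\omega$ statement in the one-parameter setting.)
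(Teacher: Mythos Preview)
Your proof is correct but takes a different route from the paper. The paper works by hand: starting from the continuity of $S=B^{1/2}$ given by Lemma~\ref{continuity}, it formally differentiates the identity $S^2=B$ to obtain the Lyapunov equations $B_{x_i}=X_iS+SX_i$, argues these are uniquely solvable because $S\succ 0$, notes that the solution of an invertible linear system with continuous data is continuous, identifies $X_i$ with $S_{x_i}$, and then bootstraps to second and higher partials by differentiating again. Your argument packages exactly the same linear-algebra fact---invertibility of the Lyapunov operator $H\mapsto S_0H+HS_0$ on $\mathrm{Sym}_n$ when $S_0\succ 0$---into a single application of the implicit function theorem, which delivers all derivative orders at once and spares you the inductive bookkeeping (and the somewhat informal step of identifying the formal $X_i$ with the actual partial $S_{x_i}$). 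The paper's approach has the advantage of being constructive: it displays explicitly the equations satisfied by the derivatives of $S$, which is in the spirit of the differential equations for the eigendecomposition factors derived later in Section~\ref{Theory}. Your approach is cleaner and more standard, and your observation that continuity from Lemma~\ref{continuity} is what pins the global branch to the local IFT branch is exactly the right way to glue things together.
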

\begin{proof}
Let $S=B^{1/2}$ and use that $S^2=B$.  We know that $S(x)$ is continuous, and that $B(x)$ is smooth.
Next, we define the first partial derivatives from formally differentiating the relation $S^2=B$.  That is,
consider
\begin{equation}\label{Lyap}
B_{x_i}=X_iS+SX_i\,, \quad i=1,\dots, p\ .
\end{equation}
The linear systems given by the Lyapunov equations in \eqref{Lyap} are uniquely solvable, since $S$ is positive definite
and thus invertible.  Now, the unique solution of an invertible linear system $Cz=b$ with $C$ and $b$ continuously depending
on parameters, obviously defines a continuous solution $z$, from which we conclude that the unique solutions 
$X_i$ of \eqref{Lyap} are continuous functions of the parameters $x$.  Finally, we observe that
$S_{x_i}=X_i$, $i=1,\dots, p$.

At this point, we can look at higher derivatives.  We see the situation for the second derivatives, from which the general
argument will be evident.

Rewrite \eqref{Lyap}
\begin{equation*}
B_{x_i}=S_{x_i}S+SS_{x_i}\,, \quad i=1,\dots, p\ ,
\end{equation*}
and consider the second partial derivatives from formally differentiating this relation.
We get:
\begin{equation}\label{Lyap2}
B_{x_ix_j}=X_{ij}S+S_{x_i}S_{x_j}+S_{x_j}S_{x_i}+SX_{ij}\,, 
\quad i,j=1,\dots, p\ .
\end{equation}
Rearranging terms in \eqref{Lyap2}, we obtain
$$B_{x_ix_j}-S_{x_i}S_{x_j}-S_{x_j}S_{x_i}=X_{ij}S+SX_{ij}\,, $$
which is again uniquely solvable and gives a continuous solution $X_{ij}$, and we observe that
$S_{x_ix_j}=X_{ij}$.  Finally, observe that, from the left-hand-side of \eqref{Lyap2}, we get
that $X_{ij}=X_{ji}$, that is $S_{x_ix_j}= S_{x_jx_i}$, and thus
the order of differentiation of the second partial derivatives does not matter. 

Continuing to formally differentiate, we obtain continuous higher derivatives and the result
follows.
\end{proof}

Finally, we specialize Theorem \ref{smoothness} to the case of $B$ analytic.

\begin{thm}\label{SqRootAna}
Let $B\in \cont^{\omega}(\R,\Rnxn)$, symmetric and positive definite for all $x$.  Then,
the unique positive definite square root $B^{1/2}$ is analytic in $x$ as well.
\end{thm}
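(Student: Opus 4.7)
The plan is to realize $B^{1/2}$ locally as the composition of a real-analytic implicit function with the analytic input $B(x)$. Specifically, consider the polynomial, hence real-analytic, map $F \colon \mathrm{Sym}_n \times \mathrm{Sym}_n \to \mathrm{Sym}_n$ defined by $F(S,B) = S^2 - B$, where $\mathrm{Sym}_n$ denotes the space of real symmetric $n \times n$ matrices. At any base point $(S_0,B_0)$ with $B_0 \succ 0$ and $S_0 = B_0^{1/2}$, we have $F(S_0,B_0) = 0$, and the partial Fr\'echet derivative $\partial_S F(S_0,B_0)$ is the Lyapunov operator $H \mapsto S_0 H + H S_0$, which is invertible on $\mathrm{Sym}_n$ because $S_0 \succ 0$; this is the very operator already exploited in the proof of Theorem~\ref{smoothness}.

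First I would invoke the real-analytic implicit function theorem at $(S_0,B_0)$ to obtain a real-analytic map $\phi$ on a neighborhood $\mathcal U$ of $B_0$ in $\mathrm{Sym}_n$ with $\phi(B_0) = S_0$ and $\phi(B)^2 = B$ throughout $\mathcal U$. Since positive definiteness is an open condition, after shrinking $\mathcal U$ we may ensure that $\phi(B) \succ 0$ on $\mathcal U$, and then $\phi(B) = B^{1/2}$ by uniqueness of the positive definite square root (Lemma~\ref{continuity}). Fixing an arbitrary $x_0 \in \R$ and taking $B_0 = B(x_0)$, analyticity of $x \mapsto B(x)$ combined with the composition rule for real-analytic maps yields that $x \mapsto \phi(B(x)) = B(x)^{1/2}$ is real analytic in a neighborhood of $x_0$. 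Since $x_0$ is arbitrary, $B^{1/2} \in \cont^{\omega}(\R, \Rnxn)$.

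The substantive point is the Lyapunov invertibility check, which is immediate from $B_0 \succ 0$; the real-analytic implicit function theorem itself is the standard finite-dimensional statement, so no infinite-dimensional machinery is required. A more hands-on route, closer in spirit to Lemma~\ref{continuity}, would extend $B$ to a holomorphic matrix-valued function on a complex neighborhood of $\R$ (possible by real-analyticity of $B$) and define $B^{1/2}$ via the Dunford--Riesz contour integral $\frac{1}{2\pi i}\oint_\Gamma \sqrt{z}\,(zI - B(x))^{-1}\,dz$, with $\Gamma$ a simple closed contour in $\mathbb{C}\setminus(-\infty,0]$ enclosing the spectrum of $B(x_0)$; holomorphicity in $x$ would then be manifest, and agreement with the positive definite square root on the real axis would follow from the spectral decomposition of $B$ together with the residue theorem. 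The main obstacle in either approach is ensuring uniformity of the local data (the neighborhood $\mathcal U$ in the first approach, or the contour $\Gamma$ and the spectral localization in the second) across a sufficiently large complex neighborhood of the fixed real parameter $x_0$; this is handled by a standard continuity-of-spectrum argument based on $B_0$ having its spectrum in a compact subset of $(0,\infty)$.
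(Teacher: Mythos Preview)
Your argument is correct and takes a genuinely different route from the paper's. The paper invokes Kato's theorem on analytic eigendecompositions of Hermitian functions: writing $B(x)=Q(x)D(x)Q^T(x)$ with $Q$ orthogonal and $D$ diagonal, both analytic, one gets $B^{1/2}(x)=Q(x)D^{1/2}(x)Q^T(x)$ and finishes by the scalar analyticity result of Lemma~\ref{SqRootFtn}. Your approach instead appeals to the real-analytic implicit function theorem applied to $F(S,B)=S^2-B$, using only the invertibility of the Lyapunov operator $H\mapsto S_0H+HS_0$ on $\mathrm{Sym}_n$ when $S_0\succ 0$---exactly the linear algebra already used in Theorem~\ref{smoothness}. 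This is more elementary and self-contained: it avoids importing Kato's deep result (which must handle eigenvalue crossings to produce analytic eigenvectors) and works directly from standard finite-dimensional analysis. The paper's route, on the other hand, gives an explicit formula for $B^{1/2}$ and ties the result to the analytic diagonalization machinery used elsewhere in Section~\ref{Theory}. Your contour-integral alternative is also valid and is perhaps the most direct way to see holomorphic dependence; your remark about uniformity of the contour near a fixed $x_0$ is the right caveat, and the continuity-of-spectrum argument you sketch handles it.
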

\begin{proof}
The proof rests on a fundamental theorem of Kato, see \cite{Kato},
whereby an analytic Hermitian function admits an analytic eigendecomposition.  Thus, we can write
$B(x)=Q(x)D(x)Q^T(x)$ where $Q$ and $D$ are analytic, $Q$ is orthogonal, and $D$ is diagonal with
$D_{ii}(x)>0$ (we note that the eigenvalues in $D$ are not necessarily ordered).
Then, we have $B^{1/2}(x)=Q(x)D^{1/2}(x)Q^T(x)$, where $D^{1/2}(x)=\diag\left(\sqrt{D_{ii}(x)}\ ,\,\
i=1,\dots, n\right)$.  The result now follows from Lemma \ref{SqRootFtn}.
\end{proof}

\begin{rem}
Recalling that the positive definite square root of a positive definite matrix is unique,
from the numerical point of view we infer that --in principle-- any desired
algebraic technique can be used to compute the square root of $B$.
\end{rem}

Next, we look at a general block-diagonalization result for the parameter dependent generalized
eigenproblem, specializing a result given by Hsieh-Sibuya and Gingold (see \cite{HsiehSibuya} and
\cite{Gingold}) for the standard
eigenvalue case.  Then, we give more refined results for the case of one parameter, and
further specialize some results to the case of periodic pencils.  All of these results will form
the justification for our algorithms to locate conical intersections.

\subsection{General block diagonalization results}
In order to simplify the problem we consider, the following result is quite useful.
It highlights that the correct transformations for the pencil under study are ``inertia transformations''.

Since our interest in this work, for reasons which will be clarified below, is for 
the case where $A$ and $B$ depend on two (real) parameters, this is the case on which we focus in 
the theorem below.

\begin{thm}[Block-Diagonalization]\label{BlockDiag}
Let $R$ be a closed rectangular region in $\R^2$.
Let $A=A^T\in \cont^k(R,\Rnxn)$, $B=B^T\succ 0 \in \cont^k(R,\Rnxn)$, $k\ge 0$, and 
suppose that the eigenvalues of the pencil $(A, B)$ %(that is the values $\lambda$ such
%that $\det(A--\lambda B)=0$) 
can be labeled so that they belong to
two disjoint sets for all $x\in R$: $\lambda_1(x),\dots,\lambda_p(x)$ in
$\Lambda_1(x)$ and $\lambda_{p+1}(x),\dots, \lambda_n(x)$ in $\Lambda_2(x)$,
$\Lambda_1(x)\cap \Lambda_2(x) =\emptyset\ ,\, \forall x\in R$.  Then, there exists
%$V\in \cont^k(R,\Rnxn)$, $B$-orthogonal, such that
%$$V^{T}(x)A(x)V(x)=\bmat{ A_1(x) & 0 \\ 0 &  A_2(x)} \ ,\, 
%V^{T}(x)B(x)V(x)=\bmat{ I_p & 0 \\ 0 &  I_{n-p}(x)} \ ,\, 
%\forall x\in R\ ,$$
$V\in \cont^k(R,\Rnxn)$, invertible, such that
$$V^{T}(x)A(x)V(x)=\bmat{ A_1(x) & 0 \\ 0 &  A_2(x)} \ ,\, 
V^{T}(x)B(x)V(x)=\bmat{ B_1(x) & 0 \\ 0 &  B_2(x)} \ ,\, 
\forall x\in R\ ,$$
with $A_1=A_1^T, B_1=B_1^T\succ 0 \in \cont^k(R,\R^{p\times p})$,
and $A_2=A_2^T, B_2=B_2^T\succ 0 \in \cont^k(R,\R^{(n-p)\times (n-p)})$, 
so that the eigenvalues of the pencil $(A_1,B_1)$ are
those in $\Lambda_1$, and the eigenvalues of the pencil $(A_2,B_2)$
are those in $\Lambda_2$, for all $x\in R$.  Furthermore, the function $V$ can
be chosen to be $B$-orthogonal ($V^TBV=I$ for all $x$)..
\end{thm}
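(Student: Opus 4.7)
The plan is to reduce the generalized eigenproblem to a standard symmetric one via the smooth positive definite square root of $B$, invoke the Hsieh--Sibuya/Gingold block-diagonalization result in the standard case, and then transport the block structure back through the square root. As a bonus, this route delivers $B$-orthogonality essentially for free.

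First, by Theorem~\ref{smoothness}, applied on an open rectangle slightly larger than $R$ (fine because $B\succ 0$ on the compact set $R$, so $B$ stays uniformly positive definite on a neighborhood), the square root $S(x):=B(x)^{1/2}$ lies in $\cont^k$, is symmetric positive definite, and hence has a $\cont^k$ inverse. Set $\tilde A(x):=S(x)^{-1}A(x)S(x)^{-1}$; this is symmetric and $\cont^k$ on $R$, and the substitution $w=Sv$ shows $(A-\lambda B)v=0\iff (\tilde A-\lambda I)w=0$. Thus the spectrum of $\tilde A(x)$ coincides with that of the pencil and splits as $\Lambda_1(x)\sqcup\Lambda_2(x)$ throughout $R$. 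The standard-case block-diagonalization theorem for symmetric $\cont^k$ matrix functions with spectrum split into two disjoint groups on a two-dimensional rectangle (Hsieh--Sibuya \cite{HsiehSibuya}, Gingold \cite{Gingold}) then supplies an orthogonal $Q\in\cont^k(R,\Rnxn)$ with
\[
Q^T(x)\,\tilde A(x)\,Q(x)\ =\ \bmat{\tilde A_1(x) & 0 \\ 0 & \tilde A_2(x)}\ ,
\]
the $p\times p$ block $\tilde A_1$ carrying the eigenvalues in $\Lambda_1$ and the complementary block $\tilde A_2$ those in $\Lambda_2$.

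I would then define $V(x):=S(x)^{-1}Q(x)$, which is $\cont^k$ and invertible on $R$. Since $S^T=S$, direct computations yield
\[
V^TBV\ =\ Q^TS^{-1}B S^{-1}Q\ =\ Q^TS^{-1}S^2 S^{-1}Q\ =\ Q^TQ\ =\ I\ ,
\]
so that $V$ is $B$-orthogonal, and
\[
V^TAV\ =\ Q^TS^{-1}A S^{-1}Q\ =\ Q^T\tilde A Q\ =\ \bmat{\tilde A_1 & 0 \\ 0 & \tilde A_2}\ .
\]
Taking $A_j:=\tilde A_j$ and $B_j:=I$ establishes both the block-diagonalization claim and the ``Furthermore'' clause simultaneously; the more general block $B_j$ permitted by the statement is subsumed.

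The real work lies inside the borrowed standard-case theorem, whose proof proceeds via the Riesz spectral projector $P(x)=\frac{1}{2\pi i}\oint_\Gamma (\zeta I-\tilde A(x))^{-1}\,d\zeta$, with contour $\Gamma$ separating $\Lambda_1$ from $\Lambda_2$. The uniform spectral gap on the compact rectangle $R$ (possibly up to a finite cover, if the two groups swap relative position across $R$) lets one choose $\Gamma$ locally uniformly so that $P\in\cont^k(R,\Rnxn)$ is the orthogonal projector onto the $\Lambda_1$-invariant subspace. The main obstacle is constructing a global $\cont^k$ orthonormal frame for the range of $P$; this is precisely where the two-dimensional, contractible shape of $R$ is used, trivializing the associated rank-$p$ subbundle and guaranteeing existence of such a frame---obtainable in practice by Gram--Schmidt applied to $Pe_{j_1},\dots,Pe_{j_p}$ for appropriate coordinate indices, patched across a finite cover if no single choice works globally. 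Since this is exactly the content of the standard-case theorem, I would simply cite it rather than reprove it.
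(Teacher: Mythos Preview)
Your proposal is correct and follows essentially the same route as the paper: reduce to the symmetric eigenproblem via the smooth positive definite square root $S=B^{1/2}$ (Theorem~\ref{smoothness}), apply the Hsieh--Sibuya/Gingold block-diagonalization to $\tilde A=S^{-1}AS^{-1}$ to obtain a smooth orthogonal $Q$, and set $V=S^{-1}Q$. Your additional paragraph sketching the Riesz-projector mechanism behind the cited standard-case result is extra context the paper does not spell out, but the core argument is the same.
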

\begin{proof}
We show directly that the transformation $V$ can be chosen so that $V^TBV=I$, from
which the general result will follow.

One way to proceed is by using the unique smooth positive definite smooth square root of $B$, $B^{1/2}$, so that
the eigenvalues of the pencil are the same as those of the standard eigenvalue problem
with function $\tilde A=B^{-1/2}AB^{-1/2}$.  Because of Theorem \ref{smoothness}, the function $\tilde A$ is as
smooth as $A$ and it is clearly symmetric.  Therefore, from the cited results in \cite{HsiehSibuya, Gingold}, we
have that there exists smooth, orthogonal, $W$ such that $W^T\tilde AW=\bmat{A_1 & 0 \\ 0 & A_2}$,
with the eigenvalues of $A_i$ being those in $\Lambda_i$, and $A_i=A_i^T$, $i=1,2$.
Now we just take $V=B^{-1/2}W$.
\end{proof}
We notice that the function $V$ of Theorem \ref{BlockDiag} is clearly not unique, not even if we select one for
which $V^TBV=I$.

The block diagonalization result Theorem \ref{BlockDiag} can easily be extended to several blocks. 
In the case of $n$ distinct eigenvalues, one ends up with a full diagonalization.  
(Of course, having distinct eigenvalues is a sufficient, but not necessary, condition).
Because of its relevance in what follows, we give this fact as a separate result, with proof, in
the next subsection.  

\subsection{One parameter case: smoothness}
We say that the pencil -equivalently,
the generalized eigenproblem-- is {\emph{diagonalizable}} if
there are $n$ linearly independent eigenvectors $v_1,\dots, v_n$, associated to the eigenvalues
$\lambda_1, \dots, \lambda_n$.
Assembling these eigenvectors in a matrix $V=\bmat{v_1 & v_2 & \cdots v_n}$, 
and the associated eigenvalues along the diagonal of a matrix $\Lambda$,
$\Lambda=\diag(\lambda_1, \dots, \lambda_n)$, then we express the condition of diagonalizability in matrix form as
\begin{equation}\label{Diagonal}
AV\ = \ BV\Lambda \ .
\end{equation}

\begin{thm}\label{Smoothness}
Let $A=A^T\in \cont^k(\R,\Rnxn)$, $B=B^T\succ 0\in \cont^k(\R,\Rnxn)$, and assume that the eigenvalues
of \eqref{GenEv} are distinct for all $t$.  Then, the eigenvalues can be chosen to be $\cont^k$ functions
of $t$.  Moreover, we can also choose the corresponding eigenvector function $V$
to be a $\cont^k$ function of $t$ and to satisfy the relation $V^TBV=I$, for all $t$.
\end{thm}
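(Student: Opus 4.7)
The plan is to reduce the generalized eigenproblem to a standard symmetric one via the smooth square root of $B$, and then invoke the block-diagonalization machinery from Theorem \ref{BlockDiag} iteratively to peel off eigenvalues one at a time.

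First, I would apply Theorem \ref{smoothness} to obtain the unique symmetric positive definite square root $S(t) := B^{1/2}(t) \in \cont^k(\R,\Rnxn)$. Because $S$ is pointwise invertible, the matrix-valued function $\tilde A(t) := S^{-1}(t)A(t)S^{-1}(t)$ is symmetric and of class $\cont^k$ on $\R$. Moreover, as recalled in part (a) of the Introduction, the eigenvalues of $\tilde A(t)$ coincide with those of the pencil $(A(t),B(t))$, so by hypothesis $\tilde A(t)$ has $n$ distinct eigenvalues for every $t$.

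Second, I would apply Theorem \ref{BlockDiag} iteratively in the one-parameter setting (which is a special case of the cited references \cite{HsiehSibuya, Gingold}) to peel off one eigenvalue at a time from $\tilde A$. At the $j$-th step, the distinctness assumption supplies the disjoint spectral partition into the singleton $\{\lambda_j(t)\}$ and the rest, so we can split off a $1 \times 1$ scalar block smoothly, leaving a symmetric $\cont^k$ block of size $n-j$ with the remaining (still distinct) eigenvalues. After $n-1$ such steps, the composition of the corresponding smooth orthogonal similarities yields an orthogonal matrix $W(t) \in \cont^k(\R,\Rnxn)$, $W^TW=I$, and a diagonal $\Lambda(t) = \diag(\lambda_1(t),\dots,\lambda_n(t))$ of class $\cont^k$ with $W^T\tilde AW = \Lambda$. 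This simultaneously establishes the $\cont^k$ smoothness of the (suitably labeled) eigenvalues.

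Finally, I would set $V(t) := S^{-1}(t)W(t)$, which is $\cont^k$ by smoothness of $S^{-1}$ and $W$. A direct computation gives $V^TBV = W^TS^{-1}(SS)S^{-1}W = W^TW = I$, and $AV = (S\tilde AS)S^{-1}W = S\tilde AW = SW\Lambda = BV\Lambda$, so the relation \eqref{Diagonal} holds with the desired smooth $V$ and $\Lambda$. The only genuinely non-mechanical step is the iterated block-diagonalization; however, since distinctness of eigenvalues is preserved after each split, the hypotheses of Theorem \ref{BlockDiag} remain in force throughout, so I do not expect any real obstruction.
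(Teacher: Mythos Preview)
The proposal is correct and takes essentially the same route as the paper: reduce via the smooth square root $B^{1/2}$ to the standard symmetric problem for $\tilde A=B^{-1/2}AB^{-1/2}$, obtain a smooth orthogonal eigenvector matrix $W$ for $\tilde A$, and pull back via $V=B^{-1/2}W$ so that $V^TBV=W^TW=I$. The only difference is that the paper cites \cite[Proposition 2.4]{DieciEirola} directly for the smooth orthogonal diagonalization of $\tilde A$, whereas you obtain it by iterating the block-diagonalization (appealing, as you rightly note, to the one-parameter results in \cite{HsiehSibuya,Gingold}, since Theorem \ref{BlockDiag} as stated is for a compact rectangle in $\R^2$).
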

\begin{proof}
The proof puts together known results, and we show an argument using the square root of $B$.

From Theorem \ref{SmoothRoot}, we know that the unique positive definite square root of $B$, call it $B^{1/2}$
is as smooth as $B$.  Then, we rewrite
$$AV\ = \ BV\Lambda \iff B^{-1/2} A B^{-1/2} (B^{1/2}V) \ = \ (B^{1/2}V)\Lambda \ . $$
Now, let $\tilde A=B^{-1/2}AB^{-1/2}$ and $W=B^{1/2}V$, and observe that 
$\tilde A=\tilde A^T\in \cont^k(\R,\Rnxn)$ and so we are left to show that we can choose $W$ to be a smooth
function of orthogonal eigenvectors of $\tilde A$, from which the result will follow.  But, obviously
the eigenvalues of $\tilde A$ are the same as those of $A$, and under the assumption of having distinct
eigenvalues it is known from \cite[Proposition 2.4]{DieciEirola} 
that the eigenvalues can be chosen smooth, and $W$ can be chosen smooth and
orthogonal, from which the result follows (note that $V^TBV=W^TW=I$).
\end{proof}

We can further refine the eigendecomposition result Theorem \ref{Smoothness}, 
even allowing for coalescing eigenvalues obtaining the following results about smoothness of
the eigenvalues/eigenvectors of the generalized eigenproblem \eqref{GenEv}.

\begin{thm}\label{OneParaSmooth}
Let $A, B \in \cont^k(J, \Rnxn)$, $k\ge 1$, and
$A=A^T$, $B=B^T\succ 0$ for all $x\in J$, where $J$ is some interval of the real line.
\begin{itemize}
\item[(i)] (Finite order of coalescing)
Suppose that the continuous eigenvalues
$\lambda_1, \dots, \lambda_{n}$, satisfy
\begin{equation*}%\label{SigmadCond}
\liminf_{\tau\to 0}\frac{\abs{\lambda_i(x+\tau)-\lambda_j(x+\tau)}}
{\abs{\tau^e}}\in (0, \infty]\ ,
\end{equation*}
for some $e\le k$ and for all $x\in J$ and $i\not= j$.  Then, there exists a
$B$-orthogonal function of eigenvectors $V\in \cont^{k-e}(J,\Rnxn)$.  The 
eigenvalues can be labeled so to be $\cont^k$ functions.
\item[(ii)] (Analytic case) 
Moreover, if $A, B \in \cont^\omega$, then the eigenvalues can be labeled so to be analytic
functions, and there is an associated $B$-orthogonal analytic function of eigenvectors.  
\end{itemize}
\end{thm}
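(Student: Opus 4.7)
The plan is to reduce both claims to the corresponding smoothness results for a standard symmetric eigenvalue problem, by using the square root of $B$ that was shown to be as smooth as $B$ in Theorems \ref{smoothness} and \ref{SqRootAna}. Specifically, set
\[
\tilde A(x) \ = \ B^{-1/2}(x)\, A(x)\, B^{-1/2}(x)\ ,
\]
which is symmetric for each $x$ and, in the $\cont^k$ case, belongs to $\cont^k(J,\Rnxn)$; in the analytic case it is analytic. The eigenvalues of $\tilde A$ coincide with those of the pencil $(A,B)$, and if $W$ is an orthogonal matrix of eigenvectors of $\tilde A$, then $V=B^{-1/2}W$ is an eigenvector matrix of the pencil satisfying $V^{T}BV=W^{T}B^{-1/2}BB^{-1/2}W=W^{T}W=I$, i.e.\ $B$-orthogonal. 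So the only thing to prove in each case is that $\tilde A$ admits the desired smooth (respectively analytic) symmetric eigendecomposition.

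For part (i), I would observe that the finite-order-of-coalescing hypothesis is invariant under the reduction: since the eigenvalues of $\tilde A$ are exactly the $\lambda_i$, the inequality
\[
\liminf_{\tau\to 0}\frac{\abs{\lambda_i(x+\tau)-\lambda_j(x+\tau)}}{\abs{\tau^e}}\in(0,\infty]
\]
transfers verbatim to $\tilde A$. One then invokes the corresponding smoothness result for the standard symmetric eigenproblem (the extension of \cite[Proposition 2.4]{DieciEirola} to finite-order coalescing), which yields $\cont^k$ eigenvalues of $\tilde A$ and a $\cont^{k-e}$ orthogonal matrix $W$ of eigenvectors. Setting $V=B^{-1/2}W$ then produces a $B$-orthogonal eigenvector function of class $\cont^{k-e}$, since $B^{-1/2}\in\cont^k\subset\cont^{k-e}$, while the eigenvalues are $\cont^k$ by the above identification.

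For part (ii), I would invoke the theorem of Kato (already used in the proof of Theorem \ref{SqRootAna}) which guarantees, for an analytic Hermitian matrix valued function of one real variable, the existence of an analytic orthogonal eigenvector function $W(x)$ together with analytic eigenvalues. Applying this to the analytic symmetric $\tilde A$ gives analytic $W$ and analytic eigenvalues; then $V=B^{-1/2}W$ is analytic by Theorem \ref{SqRootAna} and is $B$-orthogonal by the same computation as above.

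The main obstacle is part (i): making sure the cited smoothness result for the standard symmetric problem is available in the precise form needed (finite order $e\le k$ of coalescing, $\cont^{k-e}$ orthogonal eigenvectors, $\cont^k$ eigenvalues). Once that result is in hand, the rest is the essentially cosmetic but crucial observation that the $B^{-1/2}$ transformation preserves the smoothness class and converts the standard orthogonality of $W$ into the $B$-orthogonality required of $V$.
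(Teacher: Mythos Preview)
Your proposal is correct and follows essentially the same route as the paper: reduce to the standard symmetric eigenproblem $\tilde A=B^{-1/2}AB^{-1/2}$ via the smooth (resp.\ analytic) square root, invoke the known one-parameter results for $\tilde A$, and pull back the orthogonal eigenvectors $W$ to $B$-orthogonal $V=B^{-1/2}W$. The only refinement is that the precise reference you flag as the ``main obstacle'' for part (i) is \cite[Theorems 3.3 and 3.4]{DieciEirola} (not an extension of Proposition 2.4), which gives exactly the $\cont^k$ eigenvalues and $\cont^{k-e}$ orthogonal eigenvectors under the finite-order-of-coalescing hypothesis.
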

\begin{proof}
As above, we reduce the problem
to that of a symmetric eigenproblem $(\tilde A-\lambda I)w=0$, with smooth, respectively analytic,
function $\tilde A$.  At this point, the stated
results are a direct application of known results in the literature for
symmetric functions of 1 parameter.  See \cite[Theorems 3.3 and 3.4]{DieciEirola}
for statements (i) and see \cite{Kato} for statement (ii).
\end{proof}

\begin{rem}\label{Isospectral}
The use of the square root of $B$ in the proof above is not necessary, and other possibilities
exist.  For example, using the Cholesky factor of $B$: $B=LL^T$, where $L$ is lower triangular
with positive diagonal; recall that, from Theorem \ref{SmoothChol} we know that $L$ is as smooth as $B$.  Using
this, we get 
$$ AV\ = \ BV\Lambda \iff L^{-1} A L^{-T} (L^{T}V) \ = \ (L^T V)\Lambda \iff \hat A W_C=W_C\Lambda$$ 
with $\hat A=L^{-1}AL^{-T}$, and $W_C= L^{T}V$.  As before, $\hat A$ is smooth and symmetric and $W_C$
is smooth and orthogonal.  This leads to an interesting consequence.
Assume that the distinct eigenvalues are
arranged along the diagonal of $\Lambda$ in a fixed way, say in increasing fashion, for both the
eigendecompositions of $\tilde A$ and of $\hat A$.  Then, the functions $\tilde A$ and $\hat A$ are
two symmetric isospectral functions, and are orthogonally similar.  
Indeed, let us call $W_S$ the orthogonal factor of $\tilde A$ (that is using the square root of $B$) 
and call $W_C$ the orthogonal factor of $\hat A$ (that is, using the Cholesky factor of $B$).  Then:
$\Lambda=W_S^T\tilde AW_S=W_C^T\hat AW_C$ from which we obtain that 
$\hat A=(W_SW_C^T)^T\tilde A (W_SW_C^T)$.  It is important to stress that, in spite of the differences
in the orthogonal factors of $\tilde A$ and $\hat A$, the end result on $V$ is essentially unique;
see Corollary \ref{UniqueV} below.
\end{rem}

Next is the uniquess result for $V$.

\begin{cor}\label{UniqueV}
Under the assumptions of Theorem \ref{Smoothness}, call $V$ a smooth function of eigenvectors 
satisfying $V^TBV=I$, and rendering a certain ordering for the diagonal of $\Lambda$.
Such $V$ is unique, and any other possible (smooth) function of
eigenvectors yielding the same ordering of eigenvalues is obtained from $V$ by sign changes of $V$'s columns.
\end{cor}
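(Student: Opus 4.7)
The plan is to upgrade the pointwise uniqueness from Lemma \ref{UniqueEvectors} to a global statement on $J$, exploiting connectedness of $J$ together with the smoothness already established in Theorem \ref{Smoothness}. First I would fix two smooth $B$-orthogonal eigenvector functions $V$ and $\tilde V$ producing the same ordering of the (pairwise distinct) eigenvalues $\lambda_1(t) > \cdots > \lambda_n(t)$ along the diagonal of $\Lambda$. At any fixed $t \in J$, since the eigenvalues are distinct, Lemma \ref{UniqueEvectors} (applied at the frozen parameter value) furnishes a diagonal sign matrix $D(t) = \diag(\epsilon_1(t),\dots,\epsilon_n(t))$ with $\epsilon_i(t) \in \{\pm 1\}$ such that $\tilde V(t) = V(t) D(t)$. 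This already gives the claimed form of uniqueness at each $t$; what remains is to show that the sign pattern does not depend on $t$.

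Next I would invoke the corollary following Lemma \ref{UniqueEvectors}, which characterizes this same $D(t)$ by
$$ D(t) \;=\; \tilde V(t)^{T} B(t)\, V(t)\ . $$
Since the right-hand side is a product of smooth matrix-valued functions on $J$, the map $t \mapsto D(t)$ is continuous (in fact as smooth as $V$, $\tilde V$, and $B$). Hence each scalar function $\epsilon_i : J \to \{+1,-1\}$ is continuous on the connected interval $J$ and takes values in a two-point discrete set; therefore each $\epsilon_i$ must be constant. Consequently $D$ is a constant diagonal $\pm 1$ matrix and $\tilde V = V D$ holds on all of $J$.

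The only mildly delicate step is precisely the promotion from pointwise signs to a globally constant sign pattern; the connectedness argument outlined above handles it cleanly, and I do not foresee any genuine obstacle. It is worth stressing why the hypothesis of distinct eigenvalues on all of $J$ cannot be dropped: at a coalescence point one could perform a smooth $B$-orthogonal rotation inside a multidimensional $B$-eigenspace, producing a new smooth eigenvector function related to $V$ by something far more general than a sign-change matrix, so the pointwise argument based on Lemma \ref{UniqueEvectors} would no longer apply.
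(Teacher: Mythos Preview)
Your argument is correct and follows essentially the same route as the paper: both rest on the pointwise observation that, with distinct eigenvalues, each eigenvector is determined up to a nonzero scalar, and the normalization $V^TBV=I$ then forces that scalar to be $\pm 1$. The paper's proof stops at this pointwise statement and does not explicitly address why the sign pattern is independent of $t$; your extra step, reading off $D(t)=\tilde V(t)^T B(t)V(t)$ from the corollary to Lemma~\ref{UniqueEvectors} and invoking continuity plus connectedness of $J$, supplies that missing detail and is a welcome refinement.
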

\begin{proof}
Since the eigenvalues are distinct, then the eigenvectors are uniquely determined up to scaling.
In other words, the only freedom in specifying $V$ is given by $V\to VS$ where $S=\diag(s_i,\, i=1,\dots, n)$
with $s_i\ne 0$.  By requiring that $V^TBV=I$, we get that we must have $S^2=I$, that is $s_i^2=1$, $i=1,\dots, n$,
as claimed.
\end{proof}

\subsubsection{Differential Equations for the factors}
Our goal in this section is to derive differential equations satisfied by the smooth 
factors $V$ and $\Lambda$, under the assumption of distinct eigenvalues. So doing, we will generalize
known results in \cite{DieciEirola} for the standard eigenproblem (i.e., when
$B=I$).  As it turns out, the generalization is not entirely trivial. 

Consider \eqref{GenEv}, with 
$A=A^T\in \cont^k(\R^+,\Rnxn)$, $B=B^T\succ 0\in \cont^k(\R^+,\Rnxn)$, and assume that the eigenvalues
of \eqref{GenEv} are distinct for all $t$.  As seen in Theorem \ref{Smoothness}, we can choose 
$V$ and $\Lambda$ smooth as well satisfying \eqref{Diagonal}
and $V$ satisfies the relation $V^TBV=I$, for all $t$.

As seen in Corollary \ref{UniqueV}, we must fix a choice for
$V$.  So, suppose we have an eigendecomposition at $t=0$, that is
we have $V_0$ and $\Lambda_0$ so that
$$A(0)V_0\ = \ B(0)V_0\Lambda_0 \ , \,\, V_0^TB(0)V_0=I\ . $$
We want to obtain differential equations satisfied by the factors $V$ and $\Lambda$ for all $t\ge 0$,
satisfying the initial condition $V(0)=V_0$ and $\Lambda(0)=\Lambda_0$. 

Since the factors are smooth, we can formally differentiate the two relations 
\begin{equation}\label{want}
\text{(a)}\quad AV-BV\Lambda=0 \itext{and} \text{(b)} \quad V^TBV=I\ .
\end{equation}
Differentiation of \eqref{want}-(a) gives
$$V^TA\dot V+\dot V^TBV\Lambda = \dot \Lambda -V^T\dot A V\ ,$$
from which using $AV=BV\Lambda$, and hence $V^TA=\Lambda V^TB$, we obtain
\begin{equation}\label{Step1}\begin{split}
\dot \Lambda -V^T\dot A V  & = \Lambda(V^TB\dot V)+(\dot V^TBV)\Lambda \quad \text{or} \\ 
\dot \Lambda -V^T\dot A V  & = \Lambda(V^TB\dot V)+(V^TB\dot V)^T \Lambda \ .
\end{split}\end{equation}
Now, using the structure of $\Lambda$ (diagonal) we observe that relatively to the
diagonal entries we have (using that the diagonals of $V^TB\dot V$ and of $(V^TB\dot V)^T$
are the same):
\begin{equation}\label{DElambdas}
\dot \lambda_i = (V^T\dot A V)_{ii} + 2 \lambda_i(V^TB\dot V)_{ii}  \ ,\,\, i=1,\dots, n\ ,
\end{equation}
that is the eigenvalues in general satisfy a linear non-homogeneous differential equation.

Next, differentiating \eqref{want}-(b), we obtain $\dot V^TBV+V^T\dot B V+V^TB\dot V=0 $ from which
we get 
\begin{equation}\label{Step2}
(V^TB \dot V)^T+(V^TB\dot V)= -(V^T\dot BV) \ ,
\end{equation}
hence we can obtain an expression for the symmetric part of $(V^TB\dot V)$, and in
particular in \eqref{DElambdas} we can use
\begin{equation*}%\label{DElamdbdas2}
2(V^TB\dot V)_{ii} \ = \  -(V^T\dot BV)_{ii}\ , \,\ i=1,\dots, n\ .
\end{equation*}
What we are missing is an expression for the anti-symmetric part of $(V^TB\dot V)$.  To arrive at this,
we use \eqref{Step1} relative to the off-diagonal entries.  This gives the following for the $(i,j)$ and $(j,i)$ entries:
\begin{equation}\label{Step3a}\begin{split}
& \lambda_i(V^T B \dot V)_{ij}+(\dot V^T B V)_{ij}\lambda_j = -(V^T \dot A V)_{ij}  \quad \text{or} \\
& (\lambda_i+\lambda_j)(V^T B \dot V)_{ij}+\lambda_j\left[ (\dot V^T B V)_{ij}-(V^T B \dot V)_{ij}\right] = -(V^T \dot A V)_{ij} \ 
\end{split}\end{equation}
and
\begin{equation}\label{Step3b}\begin{split}
& \lambda_j(V^T B \dot V)_{ji}+(\dot V^T B V)_{ji}\lambda_i = -(V^T\dot A V)_{ji} \quad \text{or} \\
& (\lambda_i+\lambda_j)(V^T B \dot V)_{ji}+\lambda_i\left[ (\dot V^T B V)_{ji}-(V^T B \dot V)_{ji}\right] = -
(V^T\dot A V)_{ji}  \quad \text{or} \\ 
& (\lambda_i+\lambda_j)(\dot V^T B V)_{ij}+\lambda_i\left[ (V^T B \dot V)_{ij}-(\dot V^T B V)_{ij} \right] =
-(V^T \dot AV)_{ij}\ , 
\end{split}\end{equation}
where we have used symmetry of $V^T\dot A V$ and the fact that the $(i,j)$-th entry of a matrix is the $(j,i)$-th entry
of its transpose.  Now, adding the last two expressions in \eqref{Step3a} and \eqref{Step3b}, we  obtain
\begin{equation}\label{Step4}\begin{split}
& (\lambda_i+\lambda_j)\left[(V^T B\dot V)_{ij}+ (\dot V^TB V)_{ij}\right]\ + \\
(\lambda_j-\lambda_i) & \left[ (\dot V^TBV)_{ij}-(V^TB\dot V)_{ij}\right] \ = \ -2(V^T\dot AV)_{ij}\ ,
\end{split}\end{equation}
and thus we can obtain an expression for the antisymmetric part of $(V^TB\dot V)$, upon using \eqref{Step2} for its
symmetric part.

So, finally, using \eqref{Step2} and \eqref{Step4}, we can obtain a formula for the term $V^TB\dot V$ which
depends on $B,\dot B, \Lambda$ and $V$. Let us formally set $C=V^TB\dot V$, and summarize 
the sought differential equations for $V$ and $\Lambda$:
\begin{equation}\label{DEforV}\begin{split}
\dot V \ & = \ VC\ , \,\, V(0)=V_0 \ , \\
C+C^T\ & = -V^T\dot B V\ ,   \itext{that is} C_{ij}+C_{ji}=-(V^T\dot B V)_{ij} \ , \\
\text{and} \quad C_{ij}-C_{ji}\ & = \
\frac{1}{\lambda_j-\lambda_i}\left[ 2(V^T\dot A V)_{ij}+(\lambda_i+\lambda_j)(C_{ij}+C_{ji})\right]\ , \\ 
\dot \Lambda  \ & = \ \diag(V^T\dot AV)- \Lambda \diag (V^T\dot B V)\ ,\,\, \Lambda(0)=\Lambda_0 \ , 
\end{split}\end{equation}

\begin{exm}[Standard Eigenproblem]
The most important special case of the previous analysis is of course the case where $B=I$, the standard
eigenproblem.  In this case, since $\dot B=0$, we obtain major simplifications.
For one thing, \eqref{DElambdas} is a simple integral not a linear differential equation for the eigenvalues:
\begin{equation}\label{DElambdasI}
\dot \lambda_i = (V^T\dot A V)_{ii}  \ ,\,\, i=1,\dots, n\ .
\end{equation}
Further, from \eqref{Step2} we observe that $V^T\dot V$ must be anti-symmetric, and thus we have that
$C=V^T\dot V$ is such that $C^T=-C$.  Hence, \eqref{Step4} simplifies to read
\begin{equation}\label{Step4I}
\dot V=VC\ ,\quad C_{ij}\ = \  \frac{(V^T\dot AV)_{ij}}{\lambda_j-\lambda_i}, \ i\ne j\ ,\,\, C_{ii}=0 \ 
\end{equation}
Formulas \eqref{DElambdasI} and \eqref{Step4I} of course match those derived for the
standard eigenproblem in \cite{DieciEirola}.
\end{exm}

\subsection{Periodicity}

To justify our algorithms to locate conical intersections, we will need being able to smoothly find
eigenvalues of the pencil (under the assumption that the eigenvalues
are distinct) along a closed loop in parameter space.  For this reason, we next give some results on
periodicity for the square root and the Cholesky factors of a positive definite periodic function, as
well as some general results on periodicity.

To begin with, let us properly define what we mean by a periodic function, and give an elementary
result on periodicity of the square root of a function.

\begin{defn}
A function $f\in\cont^k(\R,\R)$ ($k\ge 0$) is called periodic of period $1$, or simply $1$-periodic,
if $f(t+1)=f(t)$, for all $t$.  Moreover, we say that $1$ is the minimal period of $f$
if there is no $\tau<1$ for which $f(t+\tau)=f(t)$, for all $t$.  In the same way, we
say that the pencil $(A,B)$ is periodic of period $1$ if $A(t+1)=A(t)$ and $B(t+1)=B(t)$,
and further of minimal period $1$ if either $A$ or $B$ is such.
\end{defn}

\begin{lem}\label{PeriodicScalarRoot}
Let the real valued function $f\in\cont^k(\R,\R)$, $k\ge 0$, be strictly positive for all $t$, and
let $f$ be periodic of minimal period $1$.  
Let $s(t)=\sqrt{f(t)}$, $t\in \R$.  Then also $s$ is $\cont^k$ and periodic of minimal period $1$.
\end{lem}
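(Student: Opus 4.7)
The plan is to decompose the statement into three assertions and verify each: (a) $s \in \cont^k(\R,\R)$; (b) $s$ is $1$-periodic; (c) $1$ is the minimal period of $s$. The only mild wrinkle is that Lemma \ref{SqRootFtn} as stated requires a bounded open domain $\Omega$, whereas here we work on all of $\R$; periodicity itself will resolve this.

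For smoothness, I would fix any bounded open interval $\Omega \subset \R$ (say $\Omega=(-1,2)$). Because $f$ is continuous on $\R$, it is continuous on the compact set $\bar\Omega$, hence bounded there by some $\alpha < \infty$, and by hypothesis $f>0$. Lemma \ref{SqRootFtn} (with $p=1$) then gives $s = \sqrt{f} \in \cont^k(\Omega,\R)$. Since $\Omega$ was an arbitrary bounded open interval, $s \in \cont^k(\R,\R)$.

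For $1$-periodicity, I simply compute $s(t+1) = \sqrt{f(t+1)} = \sqrt{f(t)} = s(t)$ for every $t\in \R$, using that the positive square root is a function.

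For minimality, I argue by contradiction: if some $\tau\in(0,1)$ satisfied $s(t+\tau)=s(t)$ for all $t$, then squaring both sides gives $f(t+\tau) = s(t+\tau)^2 = s(t)^2 = f(t)$ for all $t$, contradicting the assumption that $1$ is the minimal period of $f$. Hence no such $\tau$ exists, and $1$ is the minimal period of $s$.

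There is really no substantive obstacle here; the only point requiring any care is citing Lemma \ref{SqRootFtn} on a bounded domain rather than on $\R$ directly, which is handled by the localization argument above. Everything else is routine.
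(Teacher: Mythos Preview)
Your proof is correct and follows essentially the same approach as the paper: smoothness via Lemma~\ref{SqRootFtn}, $1$-periodicity by direct computation, and minimality by squaring to derive a contradiction with the minimal period of $f$. Your localization to a bounded interval before invoking Lemma~\ref{SqRootFtn} is in fact slightly more careful than the paper's own citation, which glosses over the bounded-domain hypothesis.
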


\begin{proof} The smoothness result is in Lemma \ref{SqRootFtn}.  
For the periodicity, we argue by contradiction. \\
Observe that surely $s(t+1)=s(t)$, for all $t$, as otherwise one could not have $f(t+1)=f(t)$.
Then, if there is $\tau<1$ s.t. $s(t+\tau)=s(t)$,
for all $t$, then also $s(t)s(t)$ would be $\tau$-periodic that is $f$ would be $\tau$-periodic.
\end{proof}

Finally, we show that the Cholesky factor and the positive definite square root of a $1$-periodic
positive definite function are also $1$-periodic.

\begin{thm}\label{PeriodicFactors}
Let the function $A\in \cont^k(\R,\Rnxn)$, $k\ge 0$, be symmetric positive definite and of minimal period $1$.
\begin{itemize}
\item[(a)]
Let $L$ be the unique Cholesky factor of $A$: $A(t)=L(t)L^T(t)$, where $L$ is lower
triangular with positive diagonal, for all $t$.  Then, also $L$ has minimal period $1$.
\item[(b)]
Let $S=A^{1/2}$ tbe the unique positive definite square root of $A$: $S=S^T\succ 0$, 
$S^2=A$.  Then, also $S$ has minimal period $1$.
\end{itemize}
\end{thm}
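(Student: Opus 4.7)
The plan is to leverage uniqueness of each factorization (Cholesky with positive diagonal, and the positive definite square root) to get 1-periodicity for free, and then to argue minimality by contradiction, exactly in the spirit of the proof of Lemma \ref{PeriodicScalarRoot}.

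For part (a), I would start from the fact that $A$ is $1$-periodic, so $A(t) = A(t+1)$ for all $t$. Writing $A(t) = L(t)L^T(t)$ with $L(t)$ lower triangular and $L_{ii}(t) > 0$, we also have $A(t+1) = L(t+1)L^T(t+1)$ with $L(t+1)$ again lower triangular with positive diagonal. Since the Cholesky factorization with positive diagonal is unique, this forces $L(t+1) = L(t)$. For minimality, suppose toward contradiction that $L$ has period $\tau < 1$, i.e.\ $L(t+\tau) = L(t)$ for all $t$. Then $A(t+\tau) = L(t+\tau) L^T(t+\tau) = L(t)L^T(t) = A(t)$, contradicting minimal periodicity of $A$.

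For part (b), the argument is essentially identical, replacing the Cholesky factorization by the unique positive definite square root guaranteed by Lemma \ref{continuity} (or Theorem \ref{smoothness}). From $S(t)^2 = A(t) = A(t+1) = S(t+1)^2$, with both $S(t)$ and $S(t+1)$ symmetric positive definite, uniqueness of the square root gives $S(t+1) = S(t)$. For minimality, a period $\tau < 1$ for $S$ would yield $A(t+\tau) = S(t+\tau)^2 = S(t)^2 = A(t)$ for all $t$, again contradicting that $1$ is the minimal period of $A$.

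There is essentially no obstacle: the entire proof rests on the two uniqueness statements already recorded earlier in the paper (uniqueness of $L$ given the sign convention $L_{ii}>0$, and uniqueness of the symmetric positive definite square root from Lemma \ref{continuity}). The only mild subtlety is to be clear that the contradiction for minimality only uses the \emph{definition} of minimal period for $A$ and the purely algebraic composition $L \mapsto LL^T$ or $S \mapsto S^2$; no smoothness argument or differentiation is needed beyond what is already built into the hypothesis $A \in \cont^k$.
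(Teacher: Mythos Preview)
Your proof is correct and follows essentially the same argument as the paper's own proof: use uniqueness of the Cholesky factor (with positive diagonal) and of the positive definite square root to obtain $1$-periodicity, then argue minimality by contradiction via $A = LL^T$ (respectively $A = S^2$). The only cosmetic difference is that the paper also cites Theorems \ref{SmoothChol} and \ref{smoothness} for smoothness of $L$ and $S$, which you rightly observe is not actually needed for the periodicity claims themselves.
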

\begin{proof}  First, consider the case of the Cholesky factor.
From Theorem \ref{SmoothChol}, we know that $L$ is as smooth as $A$, and $L(t)L^T(t)=A(t)$ for all $t$.
Now, since $A(t+1)=A(t)$, then $A(t+1)=L(t)L^T(t)$ as well as $A(t+1)=L(t+1)L^T(t+1)$.  From uniqueness
of the Cholesky factor, we then must have $L(t)=L(t+1)$.
%and the Cholesky factor is unique, we must have that $L$ satisfies $L(t+1)=L(t)$, for all $t$. 
Finally, if $L$ had minimal period $\tau<1$,
then necessarily so would $A$, but this contradicts that the minimal period of $A$ is $1$.

The proof for the square root is quite similar.  Using Theorem \ref{smoothness}, we know that $S$ is 
as smooth as $A$ and $S^2(t)=A(t)$ for all $t$.  Since $A(t+1)=A(t)$, then also $S(t+1)$ is a positive
definite square root of $A(t)$.  Since the square root is unique, we then have $S(t+1)=S(t)$.
As before, if $S$ had minimal period $\tau<1$,
then necessarily so would $A$, contradicting that the minimal period of $A$ is $1$.
\end{proof}

The next result is a corollary to Theorem \ref{BlockDiag} and will come in handy.

\begin{cor}\label{SimplePeriod1}
Let $V\in \cont^k(R,\Rnxn)$ be the function of which in Theorem \ref{BlockDiag}.
Let $\Gamma$ be a simple closed curve in $R$,
parametrized as a $\cont^p$ ($p\ge 0$) function $\gamma$ in the variable $t$,
so that the function $\gamma:\ t\in \R \to R$ is $\cont^p$ and of (minimal) period $1$.
Let $m=\min(k,p)$, and let $V_{\gamma}$ be the $\cont^m$ function $V(\gamma(t))$, $t\in \R$.
Then, $V_{\gamma}$ is $\cont^m$ and $1$-periodic.
\end{cor}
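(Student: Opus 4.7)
The plan is to observe that this corollary is essentially a composition result, so the work is bookkeeping rather than genuine mathematics. The two claims, $\cont^m$ smoothness and $1$-periodicity, are handled separately, with smoothness coming from the chain rule and periodicity from the definition of the composition.

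First I would verify that $V_{\gamma}$ is well defined. Since $\Gamma$ is a simple closed curve in $R$ and $\gamma$ parametrizes it, the image of $\gamma$ lies in $R$, so $V(\gamma(t))$ makes sense for every $t\in\R$; moreover, since by Theorem \ref{BlockDiag} the eigenvalue grouping persists on all of $R$, the block structure of $V(\gamma(t))$ is preserved along $\gamma$.

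For the smoothness, I would invoke the standard chain rule for compositions of functions of several real variables: if $V\in\cont^k(R,\Rnxn)$ and $\gamma\in\cont^p(\R,R)$, then $V\circ\gamma\in\cont^m(\R,\Rnxn)$ with $m=\min(k,p)$, by successive application of the multivariate chain rule (each further derivative loses one order of smoothness from each factor, so one cannot exceed $\min(k,p)$ derivatives).

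For periodicity, the argument is immediate: since $\gamma(t+1)=\gamma(t)$ for every $t\in\R$, we get
\[
V_{\gamma}(t+1)=V(\gamma(t+1))=V(\gamma(t))=V_{\gamma}(t),
\]
so $V_{\gamma}$ is $1$-periodic. There is no real obstacle here; the only subtlety worth mentioning is that we do not claim $1$ is the \emph{minimal} period of $V_{\gamma}$ (indeed, if $\Gamma$ is traversed by $\gamma$ in a way that $V$ happens to be invariant under subperiods, $V_{\gamma}$ could have smaller minimal period), which is consistent with the statement of the corollary that speaks only of ``$1$-periodic''.
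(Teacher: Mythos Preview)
Your proposal is correct and follows the same approach as the paper: the paper's proof simply states that the result is immediate upon considering the composite function $V_\gamma$ and using the stated smoothness and periodicity results. Your version spells out the chain-rule and periodicity-of-composition steps in slightly more detail, and your remark on minimal period is a nice clarification, but the underlying argument is identical.
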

\begin{proof}
The result is immediate upon considering the composite function $V_{\gamma}$ and using
the stated smoothness and periodicity results.
\end{proof}

\begin{rem}\label{DistinctAndPeriodic}
In case the eigenvalues of the pencil $(A,B)$ are distinct in $R$, then the $B$-orthogonal function $V$
has diagonalized $(A,B)$.  For given ordering of the eigenvalues, as we already remarked $V$ is essentially unique:
the degree of non-uniqueness is given only by the signs of the columns of $V$.  Naturally, in this case 
Corollary \ref{SimplePeriod1} will give that a smooth $V_\gamma$ will be a 1-periodic function.
\end{rem}

The last result we give is a generalization of \cite[Lemma 1.7]{DiPu1} and it essentially states
that if the pencil $(A,B)$ has minimal period $1$, then there cannot coexist continuous eigendecompositions
of minimal periods $1$ and $2$.

\begin{lem}\label{1not2}
Let the functions $A=A^T\in \cont^k(\R,\Rnxn)$, $B=B^T\in \cont^k(\R,\Rnxn)\succ 0$, $k\ge 0$, 
be of minimal period $1$ and let the pencil $(A,B)$ have distinct eigenvalues for all $t$.
Suppose that %(the pencil $(A,B)$ is continuously diagonalizable)
there exists $V\in \cont^0$, invertible, and diagonal $\Lambda$ such that
$$A(t)V(t)=B(t)V(t)\Lambda(t)\ ,\,\, \forall t\ ,$$
with:
\begin{itemize}
\item[(i)] $\Lambda\in\mathcal{C}^0(\R,\Rnxn)$ diagonal with
distinct diagonal entries, and s.t. $\Lambda(t+1)=\Lambda(t)$; 
\item[(ii)] $V\in\mathcal{C}^0(\R,\Rnxn)$ invertible, with 
$$V(t+1)=V(t)\ D\ ,\,\, \forall t\in \R\ ,$$ where
$D$ is diagonal with $D_{ii}=\pm 1$ for all $i$, but $D\ne I_n$.
\end{itemize}
Then, there cannot exist an invertible continuous matrix function $T$ diagonalizing the pencil and of
period $1$.
% such that:
% \begin{itemize}
% \item[ ] $T \in \cont^0(\R,\Rnxn)$, invertible, of minimal period $1$,
% and eigendecomposing the pencil.
% % (that is, for which $A(t)T^{-1}(t)=B(t)T^{-1}(t)\Lambda(t)$ for all $t\in\R$.
% \end{itemize}
\end{lem}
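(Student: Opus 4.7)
The plan is to argue by contradiction. Suppose that, in addition to the ``period 2'' eigendecomposition $V$ given in the hypothesis, there exists a continuous, invertible, period-1 matrix function $T$ diagonalizing the pencil, say $A(t)T(t)=B(t)T(t)\tilde\Lambda(t)$ with $\tilde\Lambda$ continuous and diagonal. Since the pencil has distinct eigenvalues for every $t$, both $\Lambda$ and $\tilde\Lambda$ are continuous labelings of the same (distinct) eigenvalues along the real line. Two such continuous labelings differ by a permutation which, being locally constant on a connected set, is in fact a single constant permutation $P$. Replacing $T$ by $TP^T$ (still continuous, invertible, and of period $1$) I may assume that $\tilde\Lambda=\Lambda$.

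With both $T$ and $V$ diagonalizing the pencil in the same order and eigenvalues being distinct, each column of $T(t)$ must be a nonzero scalar multiple of the corresponding column of $V(t)$. Hence there exists a diagonal matrix $S(t)=\diag(s_1(t),\dots,s_n(t))$ with $T(t)=V(t)S(t)$. Because $V$ is invertible and both $V$ and $T$ are continuous, $S(t)=V(t)^{-1}T(t)$ is continuous on $\R$, and each $s_i(t)$ is nowhere zero (otherwise $T$ would fail to be invertible).

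Now I exploit the two different periodicity properties. The relation $T(t+1)=T(t)$ combined with $V(t+1)=V(t)D$ gives
\begin{equation*}
V(t)\,D\,S(t+1)=V(t)\,S(t),
\end{equation*}
and since $V(t)$ is invertible, $D\,S(t+1)=S(t)$. Entrywise, $s_i(t)=D_{ii}s_i(t+1)$ for every $i$ and every $t$.

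The hypothesis $D\ne I_n$ produces an index $i_0$ with $D_{i_0 i_0}=-1$, so $s_{i_0}(t+1)=-s_{i_0}(t)$. In particular $s_{i_0}(0)$ and $s_{i_0}(1)$ have opposite (nonzero) signs, and since $s_{i_0}$ is continuous on $[0,1]$, the intermediate value theorem forces $s_{i_0}(t^*)=0$ for some $t^*\in(0,1)$. This contradicts the invertibility of $T(t^*)$, completing the proof. The main (minor) obstacle is the step in which I normalize the two eigenvalue orderings to coincide, and it is handled cleanly by noting that the permutation relating two continuous enumerations of a distinct-eigenvalue spectrum on a connected domain is constant; everything else is bookkeeping followed by a one-line intermediate value argument.
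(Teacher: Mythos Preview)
Your argument is correct and follows essentially the same route as the paper: assume a period-$1$ diagonalizer exists, compare it columnwise with $V$ via a continuous diagonal scaling, and derive a sign change that forces a zero by the intermediate value theorem. The only difference is cosmetic---the paper works with $TV$ (using $T^{-1}$ as the eigenvector matrix) rather than your $S=V^{-1}T$, and it tacitly assumes the same $\Lambda$, whereas you explicitly justify this normalization via the constant-permutation argument; your version is slightly more careful on that point.
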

\begin{proof}
By contradiction, suppose that there exists continuous $T$ of period $1$ such that
$A(t)T^{-1}(t)=B(t)T^{-1}(t)\Lambda(t)$, for all $t\in\R$. Therefore, we must have
$A=BT^{-1}\Lambda T$ and $A=BV\Lambda V^{-1}$ from which $\Lambda (TV)=(TV)\Lambda$.
But, $\Lambda(t)$ has distinct diagonal entries for all $t\in\R$, so that
$T(t)V(t)$ must be diagonal for all $t\in\R$.
Denote its diagonal entries by $c_1(t),\ldots,c_n(t)$, and so (since $TV$ is
invertible) $c_i\ne 0$, for all $t$.
But $T(t+1)V(t+1)=T(t)V(t)D$, for all $t\in\R$, hence there must exist an index
$i$ for which $c_i(t+1)=-c_i(t)$, which is a contradiction,
since the functions $c_i$'s are continuous and nonzero for $t\in\R$.
\end{proof}

\vspace{.5cm}

\section{Coalescing eigenvalues of \eqref{GenEv}}\label{CIs}

In this section, we study the occurrence of equal eigenvalues for \eqref{GenEv} when $A$ and $B$ depend
on two (real)  parameters.  We follow the skeleton of arguments given in \cite{DiPu1} for the symmetric
eigenproblem, and somewhat similar arguments to those used there.  Still, the extension to the symmetric
positive definite pencil is not automatic and needs to be done carefully.

First, we consider the case of a single pair of eigenvalues
coalescing, then generalize to several pairs coalescing at the same parameter values.
To begin with, we show that having a pair of coalescing eigenvalues is a codimension $2$ property.

\subsection{One generic coalescing in $\Omega$}

First, consider the $2\times 2$ case.  The following simple result is the key to
relate a {\emph{generic coalescing}} to the transversal intersection of two curves.

\begin{thm}\label{2x2Coal}
Let $A=A^T\in \cont^k(\Omega, \Rtbt)$ and $B=B^T\succ 0 \in \cont^k(\Omega, \Rtbt)$, $k\ge 1$.
Write $A(x)=\bmat{a & b \\ b & c}$ and $B(x)=\bmat{\alpha & \beta \\ \beta & \gamma}$.  Then, the
generalized eigenproblem 
\begin{equation}\label{eq:GenEvsProb_2x2}
\left(A-\lambda B\right)v=0 \ ,
\end{equation}
has identical eigenvalues at $x$ if and only if  
\begin{equation}\label{EqualEvaluesCod2}
\left\{\begin{array}{l} a\gamma \ = \ \alpha c \\ 
(a\gamma+c\alpha)\beta \ = \ 2\alpha \gamma b \end{array}\right. 
\iff \left\{\begin{array}{l} a\gamma \ = \ \alpha c \\ 
c\beta \ = \ \gamma b \end{array}\right. \ .
\end{equation}
\end{thm}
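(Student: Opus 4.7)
The plan is to reduce the generalized problem to a standard symmetric eigenproblem, where the coalescing condition becomes transparent, and then translate back to the entries of $A$ and $B$.

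First, I would invoke Theorem \ref{SmoothChol} (or equivalently Theorem \ref{smoothness}) to write $B=LL^T$ with $L$ invertible, so that the eigenvalues of the pencil $(A,B)$ coincide with those of the symmetric matrix $\tilde A = L^{-1}AL^{-T}$. Since $\tilde A$ is a $2\times 2$ real symmetric matrix, it is orthogonally diagonalizable; hence its two eigenvalues are equal to some value $\mu$ if and only if $\tilde A=\mu I_2$. Pulling this back through $L$, the pencil $(A,B)$ has a double eigenvalue at $x$ if and only if there exists $\mu\in\R$ such that
\begin{equation*}
A(x)\ =\ \mu\,B(x)\ .
\end{equation*}

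Next I would translate $A=\mu B$ into entries. Comparing the three independent entries of the symmetric matrices gives $a=\mu\alpha$, $b=\mu\beta$, $c=\mu\gamma$. Because $B\succ 0$, we have $\alpha>0$ and $\gamma>0$ (in particular $\alpha\neq 0$), so we may eliminate $\mu=a/\alpha$ to obtain the two independent scalar conditions
\begin{equation*}
\alpha c\ =\ a\gamma \itext{and} \alpha b\ =\ a\beta \ ,
\end{equation*}
which simply say that the triples $(a,b,c)$ and $(\alpha,\beta,\gamma)$ are proportional.

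Finally I would verify that both formulations in \eqref{EqualEvaluesCod2} are equivalent to this pair of conditions. Assuming $a\gamma=\alpha c$, one has $a\gamma+c\alpha=2a\gamma$, so the second equation of the first system reads $2a\gamma\beta=2\alpha\gamma b$; dividing by the positive quantity $2\gamma$ gives $a\beta=\alpha b$, and conversely this reduction is reversible. For the second (equivalent) system, given $a\gamma=\alpha c$ the equation $c\beta=\gamma b$ can be multiplied by $\alpha$ and $\alpha c$ replaced by $a\gamma$ to yield $a\gamma\beta=\alpha\gamma b$, and again dividing by $\gamma>0$ gives $a\beta=\alpha b$; the converse is likewise immediate.

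Conceptually the proof is essentially an observation plus bookkeeping, so the only potential pitfall is the very last step, where one must keep track of the role of the sign conditions $\alpha>0$ and $\gamma>0$ inherited from $B\succ 0$. These positivity facts are used twice (to legitimately divide by $2\gamma$ and to eliminate $\mu$), and they are what make the two alternative systems in \eqref{EqualEvaluesCod2} genuinely equivalent and also equivalent to the scalar proportionality of $(a,b,c)$ and $(\alpha,\beta,\gamma)$.
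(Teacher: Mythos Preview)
Your proof is correct and is actually cleaner than the paper's for the statement at hand. The paper proceeds by first rescaling (dividing rows and columns by $\sqrt\alpha$ and $\sqrt\gamma$) and then shifting so that the $A$-part becomes traceless, obtaining a pencil $\bigl(\smat{\hat a & \hat b\\ \hat b & -\hat a},\smat{1 & \tilde d\\ \tilde d & 1}\bigr)$; it then writes down the eigenvalues explicitly as in \eqref{mus} and reads off that they coincide iff $\hat a=\hat b=0$, which unwinds to \eqref{EqualEvaluesCod2}. You instead use the structural fact that a $2\times 2$ real symmetric matrix with a repeated eigenvalue must be a scalar multiple of $I$, hence the pencil has a double eigenvalue iff $A=\mu B$, and then eliminate $\mu$. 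This is a genuinely different (and shorter) route.

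What each approach buys: your argument isolates the essential linear-algebra content and makes the equivalence of the two systems in \eqref{EqualEvaluesCod2} transparent via the proportionality $(a,b,c)\parallel(\alpha,\beta,\gamma)$. The paper's longer computation, however, produces two byproducts that are reused later: the traceless normal form with the quantities $\hat a,\hat b,\tilde d$, and the explicit eigenvalue formula \eqref{mus}. Both are essential in the proof of Theorem~\ref{2by2} (to track sign changes of the eigenvector components along a loop) and in Example~\ref{ExGenCI} (to exhibit the conical structure). So if you adopt your argument in place of the paper's, you would still need to carry out the paper's reduction somewhere before Theorem~\ref{2by2}.
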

\begin{proof}
The problem 
$\left(\bmat{ a & b \\ b & c}-\lambda\bmat{ \alpha & \beta \\ \beta & \gamma }\right)v=0$ can be rewritten as
$\left(\bmat{\tilde a & \tilde b \\ \tilde b & \tilde c}-\lambda\bmat{ 1 & \tilde d \\ \tilde d &1}\right)w=0$,
where $\tilde a=a/\alpha$, $\tilde b=b/\sqrt{\alpha \gamma}$, $\tilde c=c/\gamma$ and
$\tilde d=\beta/\sqrt{\alpha \gamma}$, and $w=\bmat{v_1\sqrt{\alpha} \\ v_2\sqrt{\gamma}}$.
We observe that the sign of the entries of $v$ and of $w$ is the same, and we also note that $\tilde d^2<1$ (since
$B$ is positive definite).

We further have the following chain of equalities:
\begin{equation*}\begin{split}
& \left(\bmat{\tilde a & \tilde b \\ \tilde b & \tilde c}-\lambda\bmat{ 1 & \tilde d \\ \tilde d &1}\right)w=0  \ \iff  \\
& \left(\bmat{\tilde a & \tilde b \\ \tilde b & \tilde c}-\frac{\tilde c+\tilde a}{2}\bmat{ 1 & \tilde d \\ \tilde d &1}-
(\lambda-\frac{\tilde c+\tilde a}{2})\bmat{ 1 & \tilde d \\ \tilde d &1}\right)w=0 \\
& \iff  \left(\bmat{\hat a & \hat b \\ \hat b & -\hat a} -  \mu \bmat{ 1 & \tilde d \\ \tilde d &1}\right)w=0 \ ,
\end{split}\end{equation*}
where 
\begin{equation}\label{hats}
\hat a=\frac{\tilde a-\tilde c}{2}\ , \,\ \hat b=\tilde b-\frac{\tilde a+\tilde c}{2}\tilde d\ , \,\
\mu=\lambda-\frac{\tilde a+\tilde c}{2}\ .
\end{equation}
(Note that we have reduced the problem to one for which $A$ has $0$-trace.)
Now, an explicit computation gives
\begin{equation}\label{mus}
\mu_{1,2}(t)=\frac{-\hat b\tilde d\pm 
\sqrt{\hat b^2+(1-\tilde d^2)\hat a^2 }}{1-\tilde d^2}\,.
\end{equation}
Now, we have identical eigenvalues $\mu$ (hence $\lambda$) if and only if 
% the discriminant $\Delta$ of the characteristic polynomial is zero.  For the discriminant we have
% \begin{equation*}
% %\Delta=(1-\tilde d^2)(\tilde a-\tilde c)^2+(2\tilde b-\tilde d(\tilde a+\tilde c))^2\ ,
% \Delta=(1-\tilde d^2)(\hat a^2+\hat b^2)+(\hat b\tilde d)^2\ ,
% \end{equation*}
% and therefore 
% $$\Delta=0\ \text{ if and only if }\ \left\{\begin{array}{l} \hat a = 0 \\ \hat b = 0 \end{array}\right. .$$
$$ \left\{\begin{array}{l} \hat a = 0 \\ \hat b = 0 \end{array}\right. .$$
Rephrasing in terms of the original entries, this is precisely what we wanted to verify.
\end{proof}

We now have

\begin{thm}[$2\times 2$ case] \label{2by2}
Consider $A=A^T \in \cont^k(\Omega,\Rtbt)$, 
and $B=B^T\succ 0 \in \cont^k(\Omega, \Rtbt)$, $k\ge 1$.
For all $x\in \Omega$, write
$$A(x)=\bmat{a & b \\ b & c}\ ,\quad B(x)=\bmat{\alpha & \beta \\ \beta & \gamma}$$
and let $\lambda_1$ and $\lambda_2$ be the two continuous eigenvalues of the pencil $(A,B)$,
and labeled so that $\lambda_1(x) \ge \lambda_2(x)$ for all $x$ in
$\Omega$. Assume that there exists a unique point $\xi_0\in \Omega$
where the eigenvalues coincide: $\lambda_1(\xi_0)=\lambda_2(\xi_0)$.
Consider the $\cont^k$ function $F:\ \Omega \to \R^2$ given by
\begin{equation*}%\label{FSymm}
F(x)=\bmat{a(x)\gamma(x)-\alpha(x)c(x) \\ b(x)\gamma(x)-\beta(x)c(x)  }\,,
\end{equation*}
and assume that $0$ is a regular value for both function $a\gamma-\alpha c$ and $b\gamma -\beta x$.
\footnote{This implies that the zeros set of these functions is actually a $\cont^k$ curve (or collection
of $\cont^k$ curves).  For background on these concepts, see \cite{Hirsch:DiffTop}}
Then, consider the two $\cont^k$ curves $\Gamma_1$ and $\Gamma_2$ through $\xi_0$,
given by the zero-set of the components of $F$:
$\Gamma_1=\{x\in \Omega: \, a(x)\gamma(x)-\alpha(x)c(x) =0\}$, 
$\Gamma_2=\{x\in \Omega: \,  b(x)\gamma(x)-\beta(x)c(x)=0\}$.
Assume that $\Gamma_1$ and $\Gamma_2$ intersect transversally at $\xi_0$.
\footnote{Transversal intersection means that the two tangents to the curves at $\xi_0$
are not parallel to each other}

Let $\Gamma$ be a simple closed curve enclosing the point $\xi_0$, and let it
be parametrized as a $\cont^p$ ($p\ge 0$) function $\gamma$ in the variable $t$,
so that the function $\gamma:\ t\in \R \to \Omega$ is $\cont^p$ and $1$-periodic.
Let $m=\min(k,p)$, and let $A_{\gamma}$, $B_{\gamma}$ be the $\cont^m$ functions 
$A(\gamma(t))$, $B(\gamma(t))$, for all $t\in \R$.
Then, for all $t\in \R$, the pencil 
$(A_{\gamma}, B_{\gamma})$ has the eigendecomposition
$$A_{\gamma}(t)V(t)=B_{\gamma}(t)V_{\gamma}(t)\Lambda_{\gamma}(t)$$
such that:
\begin{itemize}
\item[(i)] $\Lambda_{\gamma}\in\mathcal{C}^m(\R,\Rtbt)$ and diagonal:
$\Lambda_{\gamma}(t)=\bmat{\lambda_1(\gamma(t)) & 0 \\ 0 & \lambda_2(\gamma(t))}$, and
$\Lambda_{\gamma}(t+1)=\Lambda_{\gamma}(t)$ for all $t\in \R$;
\item[(ii)] $V_{\gamma}\in\mathcal{C}^m(\R,\Rtbt)$, $V_{\gamma}(t+1)=-V_{\gamma}(t)$
for all $t\in \R$, and $V_\gamma$ is $B_\gamma$-orthogonal: 
$V_\gamma(t)^TB_\gamma(t)V_{\gamma}(t)=I$, for all $t\in \R$.
\end{itemize}
\end{thm}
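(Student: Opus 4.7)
The plan is to establish four facts in order: (i) along $\gamma$ there is a $\cont^m$ $B_\gamma$-orthogonal eigendecomposition; (ii) $\Lambda_\gamma$ is $1$-periodic; (iii) $V_\gamma$ is near-periodic via $V_\gamma(t+1)=V_\gamma(t)D$ for some constant diagonal sign matrix $D$; and (iv) $D=-I_2$.

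Since $\xi_0$ is the only coalescing point in $\Omega$ and $\Gamma$ avoids it, the pencil $(A_\gamma(t),B_\gamma(t))$ has distinct eigenvalues for every $t\in\R$. Theorem \ref{Smoothness} applied to the one-parameter restriction then supplies $V_\gamma,\Lambda_\gamma\in\cont^m$ with $A_\gamma V_\gamma=B_\gamma V_\gamma\Lambda_\gamma$ and $V_\gamma^T B_\gamma V_\gamma=I$, which yields (i); $1$-periodicity of $A_\gamma,B_\gamma$ together with continuity-preservation of the strict ordering of distinct eigenvalues gives $\Lambda_\gamma(t+1)=\Lambda_\gamma(t)$, which is (ii). For (iii), both $V_\gamma(t)$ and $V_\gamma(t+1)$ are $B_\gamma(t)$-orthogonal eigenbases of the same pencil realizing the same ordering, so the Corollary to Lemma \ref{UniqueEvectors} gives $V_\gamma(t+1)=V_\gamma(t)D(t)$ with $D(t)$ diagonal of entries $\pm 1$; continuity forces $D(t)$ to be a constant $D$.

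The heart of the argument is (iv). I plan to reduce to the classical symmetric case by setting $S(t)=B_\gamma(t)^{1/2}$ (which is $\cont^m$ by Theorem \ref{smoothness} and $1$-periodic by Theorem \ref{PeriodicFactors}), $\tilde A=S^{-1}A_\gamma S^{-1}$ (a $\cont^m$, symmetric, $1$-periodic function), and $W=SV_\gamma$, which is then a smooth orthonormal eigenbasis of $\tilde A$ sharing the same eigenvalue ordering as $V_\gamma$. Since $W(t+1)=W(t)D$, showing $D=-I_2$ reduces to the familiar sign-flip statement for a smooth orthonormal eigenbasis of a smooth symmetric $2\times 2$ function traversing a simple loop that encloses a transversal coalescing. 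This is essentially \cite[Lemma 1.7]{DiPu1}, and may be seen concretely from the half-angle representation $w_\pm=(\cos(\theta/2),\pm\sin(\theta/2))^T$ for the eigenvectors of the traceless part $\smat{p & q\\ q & -p}$, where $\theta=\Arg(p+iq)$ has winding number one along the loop whenever $(p,q)$ vanishes transversally at the enclosed point.

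The main obstacle is to verify that the transversality assumption on $F=(a\gamma-\alpha c,\,b\gamma-\beta c)$ at $\xi_0$ transfers to the analogous transversality of the detection pair $\tilde F=(\tilde a_{11}-\tilde a_{22},\,\tilde a_{12})$ for $\tilde A$. I would handle this by writing $\tilde A(x)=\mu_0 I+\delta(x)$, where $\mu_0$ is the common eigenvalue at $\xi_0$, so that $\delta(\xi_0)=0$; using $A=S\tilde A S$ and $\delta(\xi_0)=0$, both $DF(\xi_0)$ and $D\tilde F(\xi_0)$ factor as $v\mapsto D\delta(\xi_0)(v)$ followed by a fixed linear map on symmetric $2\times 2$ matrices. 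A direct computation (using $\alpha,\gamma>0$ and $\det B>0$) shows that these two linear maps have the same kernel—namely the scalar multiples of $I$—so nonsingularity of $DF(\xi_0)$ and of $D\tilde F(\xi_0)$ are equivalent conditions on $D\delta(\xi_0)(\R^2)$, and (iv) follows from the classical winding argument applied to $\tilde A$.
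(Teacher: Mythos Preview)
Your approach is correct and takes a genuinely different route from the paper. You reduce fully to the standard symmetric eigenproblem via $S=B^{1/2}$ (which you should define on all of $\Omega$ from the outset, not first along $\gamma$), and then invoke the classical half-angle/winding result for a symmetric $2\times 2$ function with a single transversal coalescing inside the loop; the substantive step is the transfer of transversality from $F$ to $\tilde F$, and your argument is right: writing $\tilde A(x)=\mu_0 I+\delta(x)$ with $\delta(\xi_0)=0$ and $A=\mu_0 B+S\delta S$, both $DF(\xi_0)$ and $D\tilde F(\xi_0)$ factor as $D\delta(\xi_0)$ followed by a surjection $\mathrm{Sym}(2)\to\R^2$ whose kernel is $\mathrm{span}(I)$ (for $F$ the kernel of the post-map is $\mathrm{span}(B(\xi_0))$, which $\mathrm{Ad}_{S_0}^{-1}$ carries to $\mathrm{span}(I)$), so the two Jacobians share the same kernel and one is invertible iff the other is. The paper, by contrast, does not pass to the square root: it uses only the diagonal rescaling from the proof of Theorem~\ref{2x2Coal} to reach a simplified \emph{pencil} $(\hat A,\hat B)$ with $\hat A=\smat{\hat a & \hat b\\ \hat b & -\hat a}$ and $\hat B=\smat{1 & \tilde d\\ \tilde d & 1}$, writes the eigenvectors out explicitly, tracks where each component changes sign along a small circle about $\xi_0$, and then deforms to $\Gamma$. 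Your route is more conceptual and cleanly reuses the known symmetric theory at the cost of the Jacobian bookkeeping; the paper's route is more self-contained and computational. Two small corrections: the sign-flip statement in \cite{DiPu1} is Theorem~2.2 there, not Lemma~1.7 (which is the non-coexistence result generalized here as Lemma~\ref{1not2}); and your pair $w_\pm=(\cos(\theta/2),\pm\sin(\theta/2))^T$ is not orthonormal---take $w_-=(-\sin(\theta/2),\cos(\theta/2))^T$ instead.
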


\begin{proof} The proof follows closely the one used in \cite[Theorem 2.2]{DiPu1} for
the symmetric eigenproblem, with the necessary changes due to the dealing with the
generalized eigenproblem, and also fixing some imprecisions in the proof of \cite[Theorem 2.2]{DiPu1}.

Because of Theorem \ref{2x2Coal},
$$\lambda_1(x)=\lambda_2(x)\  \Longleftrightarrow \ F(x)=\bmat{0 \\ 0}\ ,$$
and, by hypothesis, $\xi_0$ is the unique root of $F(x)$ in $\Omega$.  Moreover, 
under the assumption of $\xi_0$ being the only root of $F$
in $\Omega$, just like in the proof of Theorem \ref{2x2Coal}, see \eqref{hats}, 
we can also rewrite the problem in the simpler form
\begin{equation*}%\label{SimplerSymm}
F(x)=0 \iff G(x)=0\itext{where} G(x)=\bmat{\hat a(x) \\ \hat b(x) }\,.
\end{equation*}
Further, $0$ is a regular value for both function $\hat a$ and $\hat b$, and therefore $G(x)=0$
continues to define smooth curves intersecting transversally at $\xi_0$, call them $\hat \Gamma_1$
and $\hat \Gamma_2$ (these are just rescaling and shifting of the curves $\Gamma_1$ and $\Gamma_2$).  
Moreover, we let $(\hat A, \hat B)$ be the pencil associated to these simpler functions:
$$\hat A(x)=\bmat{\hat a(x) & \hat b(x) \\ \hat b(x) & -\hat a(x)}\ , \quad 
\hat B(b)=\bmat{1 & \tilde d(x) \\ \tilde d(x) & 1}\ ,\,\, x\in \Omega\ .$$

At this point, we will prove the asserted results for $\hat \Gamma_1$ and $\hat \Gamma_2$ by
first showing that it holds true along a small circle $C$ around $\xi_0$, and then
show that the same results hold when we continuously deform $C$ into $\Gamma$.

Since $\hat \Gamma_1$ and $\hat \Gamma_2$ intersect transversally at $\xi_0$, we let
$C$ be a circle centered at $\xi_0$, of
radius small enough so that the circle goes through each of
$\hat \Gamma_1$ and $\hat \Gamma_2$ at exactly two distinct points, see Figure \ref{figure1}.

\begin{figure}
  \center
%  \psfrag{a-d}{$\hat a(x)=0$}
%  \psfrag{b}{$\hat b(x)=0$}
%  \psfrag{gamma}{$\Gamma$}
%  \psfrag{Omega}{$\Omega$}
%  \psfrag{xi}{$\xi_0$}
%  \psfrag{C}{$C$}
  \includegraphics[width=.75\textwidth]{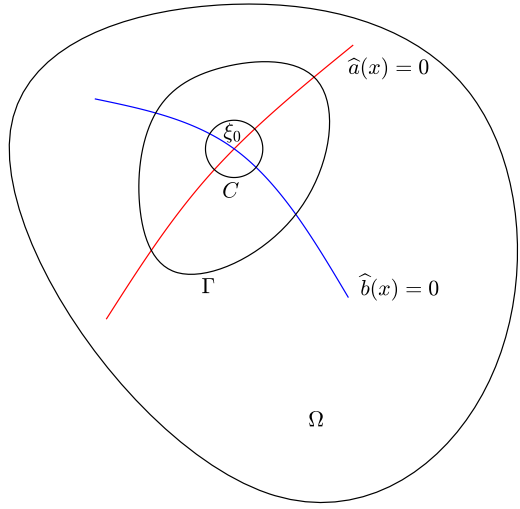}
  \caption{Transversal Intersection at $\xi_0$}
  \label{figure1}
\end{figure}

Further, let $C$ be parametrized by a continuous $1$-periodic function
$\rho$, $\rho(t+1)=\rho(t)$, for all $t\in\R$.

Consider the pencil $(\hat A(\rho(t)), \hat B(\rho(t)))$, $t\in \R$, which is thus
a smooth (and $1$-periodic) pencil, with distinct eigenvalues, so that its smooth
eigenvalues $\mu_{1,2}$ in \eqref{mus} (where
all functions $\hat a, \hat b, \tilde d$ are evaluated along $C$)
%(these are shifted values of the eigenvalues of the pencil $(A,B)$ along
%the circle $C$) 
will necessarily satisfy $\mu_j(t+1)=\mu_j(t)$, $j=1,2$.
The smooth eigenvectors of $(\hat A(\rho(t)), \hat B(\rho(t)))$, call them $W_\rho(t)$,
are uniquely determined (for each $t$) up to sign.  Call $\bmat{u_1\\ u_2}$ the eigenvector 
relative to $\mu_2$, so that 
$$\left[ 
\begin{pmatrix} \hat a & \hat b \\ \hat b & -\hat a \end{pmatrix} -\mu_2
\begin{pmatrix} 1 & \tilde d \\ \tilde d & 1 \end{pmatrix} \right]
\begin{pmatrix} u_1 \\ u_2 \end{pmatrix}=0\ .$$
From this, a direct computation shows that (recall that, presently, all functions
are computed along $C$)
$$\begin{cases}
\left(\hat a(1-\tilde d^2)+\hat b \tilde d+\sqrt{\hat b^2+(1-\tilde d^2)\hat a^2 }\right) u_1 &=
-\left(\hat b+\tilde d\sqrt{\hat b^2+(1-\tilde d^2)\hat a^2 }\right)u_2 \\
\left(-\hat a(1-\tilde d^2)+\hat b \tilde d+\sqrt{\hat b^2+(1-\tilde d^2)\hat a^2 }\right) u_2 &=
-\left(\hat b+\tilde d\sqrt{\hat b^2+(1-\tilde d^2)\hat a^2 }\right)u_1 \,.
\end{cases}$$
Therefore, from these it follows that $u_1$ (respectively, $u_2$)
changes sign if and only if $\hat b$ goes through zero and $\hat a>0$
(respectively, $\hat a < 0$). Therefore, each of the two
functions $u_1$ and $u_2$ changes sign only once over any interval of length $1$, and 
since no continuous function of period $1$ can change sign only once over one period, 
it follows that $u_1$ and $u_2$ must be $2$-periodic functions and the periodicity assertions
of the theorem follow relatively to the curve $\rho(t)$ for the eigenvector function $W$.  That is,
along $C$ we have that $W$ has period $2$.
Finally, we note that the eigenvector function $V$ has columns whose entries have the same sign
as those of $W$ (see the third line in the proof og Theorem \ref{2x2Coal}), 
so that the periodicity assertion holds for $V$.

Finally, the extension from the circle $C$ to the curve $\Gamma$ enclosing the point $\xi_0$ follows 
in the same way as was done in \cite{DiPu1}; in particular, see the final part of the
proof of Theorem 2.2 and Remark 2.5 in there. 
\end{proof}

The assumption of transversality for the curves $\Gamma_1$
and $\Gamma_2$ at $\xi_0$ is generic within the class of smooth curves intersecting at a point.
As a consequence, we can say 
that $\xi_0$ is a {\em generic coalescing point of eigenvalues} of \eqref{eq:GenEvsProb_2x2}
when $\Gamma_1$ and $\Gamma_2$ intersect transversally at $\xi_0$.
As a consequence, within the class of $\cont^k$ functions $A,B$, generically we will need
two parameters to observe coalescing of the eigenvalues of \eqref{eq:GenEvsProb_2x2},
and such coalescings will occur at isolated points in parameter space and persist
(as a phenomenon, the parameter value will typically change) under generic perturbation.

\begin{exm}\label{ExampleCI}
Take $A(x,y)= \bmat{4x+3y & 5y \\ 5y & -4x+3y}$, $B(x,y)=\bmat{5 & 3 \\ 3 & 5}$.  Then, the eigenvalues satisfy
the relation $\lambda_{1,2}=\pm \sqrt{x^2+y^2}$,
\eqref{EqualEvaluesCod2} gives the solution $x=y=0$, and the eigenvalues are not differentiable there.
If we perturb the data as $A\to A+\epsilon \bmat{1 & 1 \\ 1 & -1}$,
then the solution of \eqref{EqualEvaluesCod2} is $x=-\epsilon/4$, $y=-5\epsilon/16$. \qed
\end{exm}

Using Theorem \ref{2by2}, and Theorem \ref{BlockDiag}, 
we can characterize the case of a symmetric-definite pencil in $\Rnxn$, whose eigenvalues coalesce
at a unique point $\xi_0$.  

\begin{defn}\label{TransCoalEVs}
Let $A=A^T\in\cont^k(\Omega,\Rnxn)$, $B=B^T\succ 0\in \cont^k(\Omega,\Rnxn)$, and let
$\lambda_1(x),\ldots,\lambda_n(x)$, $x\in \Omega$, be the continuous eigenvalues of the
pencil $(A,B)$, ordered so that 
$$\lambda_1(x)>\lambda_2(x)>\ldots>\lambda_k(x)\ge\lambda_{k+1}(x)>
\ldots>\lambda_n(x)\ ,\, \forall x\in \Omega\ ,$$
and
$$\lambda_k(x)=\lambda_{k+1}(x)\Longleftrightarrow x=\xi_0\in \Omega\ .$$
Let $R$ be a rectangular region $R\subseteq \Omega$ containing $\xi_0$ in its interior.
Moreover, let
\begin{itemize}
\item[(1)] $V\in \cont^k(R,\Rnxn)$ be a $B$-orthogonal function achieving
the reduction guaranteed by Theorem \ref{BlockDiag}:
\begin{equation*}\begin{split}
& V^T(x)A(x)V(x)=\bmat{\Lambda_1(x) & 0 & 0 \\ 0 & \tilde A(x) & 0 \\ 0 & 0 & \Lambda_2(x)}\ ,\,\, \text{and} \\
& V^T(x)B(x)V(x)=\bmat{I_{k-1} & 0 & 0 \\ 0 & \tilde B(x) & 0 \\ 0 & 0 & I_{n-k-1}}\ ,\,\ 
\forall x\in R\ , 
\end{split}\end{equation*}
where
$\Lambda_1\in\cont^k(R,\R^{(k-1) \times (k-1)})$ and
$\Lambda_2\in\cont^k(R,\R^{(n-k-1) \times (n-k-1)})$, such that, for all $x\in R$,
$\Lambda_1(x)=\diag(\lambda_1(x),\ldots,\lambda_{k-1}(x))$, and
$\Lambda_2(x)=\diag(\lambda_{k+2}(x),\ldots,\lambda_{n}(x))$.  Moreover,
$\tilde A=\tilde A^T\in\cont^k(R,\Rtbt)$, $\tilde B=\tilde A^T\succ 0 \in\cont^k(R,\Rtbt)$
and the pencil $(\tilde A, \tilde B)$ has 
eigenvalues $\lambda_k(x),\lambda_{k+1}(x)$ for each $x\in R$;
\item[(2)]  for all $x\in R$, write $\tilde A(x)=\bmat{a(x) & b(x) \\ b(x) & d(x)}$, 
$\tilde B(x)=\bmat{\alpha(x) & \beta(x) \\ \beta(x) & \gamma(x)}$.  Assume that $0$ is
a regular value for the functions $a\gamma-\alpha c$ and $b\gamma -\beta c$, 
and define the function $F$ and the curves $\Gamma_1$ and $\Gamma_2$
as in Theorem \ref{2by2}.
\end{itemize}
Then, we call $\xi_0$ a {\em generic coalescing point of eigenvalues} in $\Omega$,
if the curves $\Gamma_1$ and $\Gamma_2$ intersect transversally at $\xi_0$.
\end{defn}

\begin{rem}
Arguing in a similar way to \cite[Theorem 2.7]{DiPu1}, it is a (lengthy, but simple)
computation to verify that Definition \ref{TransCoalEVs} is independent of the transformation
$V$ used to bring the pencil $(A,B)$ to block-diagonal form.
\end{rem}

\begin{cor}\label{cod2}
Having exactly a pair of equal eigenvalues of \eqref{GenEv} is a codimension 2 phenomenon.
\end{cor}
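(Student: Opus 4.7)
The plan is to stitch together Theorem \ref{BlockDiag} and Theorem \ref{2x2Coal} with a standard preimage-theorem count. Nothing deeper than these already-proven facts is needed.

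First, near a point $\xi_0\in\Omega$ at which exactly the pair $\lambda_k,\lambda_{k+1}$ coalesces, I would split the continuous eigenvalues into the three disjoint clusters $\{\lambda_1,\ldots,\lambda_{k-1}\}$, $\{\lambda_k,\lambda_{k+1}\}$, $\{\lambda_{k+2},\ldots,\lambda_n\}$ and apply Theorem \ref{BlockDiag} (twice, if desired) to obtain, on a rectangular neighborhood $R\subseteq\Omega$ of $\xi_0$, a $B$-orthogonal $\cont^k$ change of basis $V$ that block-diagonalizes $(A,B)$ with a central $2\times 2$ symmetric-definite pencil $(\tilde A,\tilde B)$ carrying exactly the two coalescing eigenvalues $\lambda_k,\lambda_{k+1}$. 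Because this change of basis preserves eigenvalues, the coalescing locus of $(A,B)$ inside $R$ coincides with that of $(\tilde A,\tilde B)$.

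Next I would invoke Theorem \ref{2x2Coal} on $(\tilde A,\tilde B)$: the two eigenvalues of this $2\times 2$ pencil coincide at $x$ if and only if the two scalar $\cont^k$ equations $a\gamma-\alpha d=0$ and $b\gamma-\beta d=0$ hold simultaneously (in the notation of Definition \ref{TransCoalEVs}). Equivalently, the coalescing locus equals $F^{-1}(0)$ for the $\cont^k$ map $F:\Omega\to\R^2$ already featured in Theorem \ref{2by2}. Codimension $2$ is then immediate from the preimage theorem: generically $0$ is a regular value of $F$ — this is precisely the transversality of $\Gamma_1$ and $\Gamma_2$ hard-wired into Definition \ref{TransCoalEVs} — so that $F^{-1}(0)$ is a $\cont^k$ submanifold of $\Omega$ of codimension $2$, in particular a discrete set of points in the two-parameter case.

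The only step requiring any real care is ensuring that the two-equation count is intrinsic, i.e.\ independent of the particular block-diagonalizing $V$ used to extract $(\tilde A,\tilde B)$. This is exactly the invariance remark flagged immediately after Definition \ref{TransCoalEVs}, and it constitutes the one nontrivial obstacle here; a direct (if lengthy) computation analogous to \cite[Theorem 2.7]{DiPu1} should settle it. Everything downstream of that invariance is then a clean accounting argument.
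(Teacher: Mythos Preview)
Your proposal is correct and follows essentially the same route as the paper's proof: reduce to a $2\times 2$ pencil via the block-diagonalization of Definition \ref{TransCoalEVs}, then count the two independent scalar conditions from Theorem \ref{2x2Coal} (equivalently, $\hat a=0$ and $\hat b=0$). The paper's argument is terser---it simply cites those two relations together with Definition \ref{TransCoalEVs}---but your more explicit invocation of the preimage theorem and the invariance remark amounts to the same content.
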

\begin{proof}
This is because coalescence is expressed by the two relations in \eqref{EqualEvaluesCod2}, or --as seen
in the proof of Theorem \ref{2x2Coal}-- by the two relations
$\left\{\begin{array}{l} \hat a = 0 \\ \hat b = 0 \end{array}\right.$.  This, coupled with
Definition \ref{TransCoalEVs}, gives the claim.
\end{proof}

As a consequence of its definition, and of Corollary \ref{cod2},
for a coalescing point of eigenvalues of a two-parameter symmetric-definite
pencil to be a generic coalescing point is a generic property.

\begin{rem}
Although the above reasoning on the codimension is done relative to matrices $A$ and $B$ that
are ``full'', the stated codimension does not change when $A$ and $B$ are banded functions, both
with bandwidth $b\ge 1$.  For example, this fact can be appreciated by pointing out that 
the pencil $(A-\lambda B)v=0$ has same eigenvalues as the symmetric eigenproblem 
$(\tilde A-\lambda I)w=0$; e.g., with $w=L^Tv$ and $B=LL^T$.  Although $L$ is banded when
$B$ is so, the function $\tilde A$ is full, hence the codimension of having a pair of
equal eigenvalues is the same as that of a symmetric eigenproblem having a pair of
equal eigenvalues, which is $2$.
\end{rem}

As already exemplified by Example \ref{ExampleCI}, at a point where eigenvalues of the
pencil coalesce, there is a complete loss of smoothness of the eigenvalues.  In fact,
the situation of Example \ref{ExampleCI} is fully general, as the next example shows.

\begin{exm}\label{ExGenCI}
Without loss of generality (see the proof of Theorem \ref{2x2Coal}), take the symmetric-definite
pencil $(A,B)$ with 
$$A(x)= \bmat{a(x) & b(x) \\ b(x) & -a(x)}\ , \,\ B(x)=\bmat{1 & d(x) \\ d(x) & 1}\,.$$
and let $\xi_0$ be such that $a(\xi_0)=b(\xi_0)=0$, and (because of transversality) we also
have that the Jacobian 
$\bmat{\nabla a \\ \nabla b}_{\xi_0}=
\bmat{a_x & a_y \\ b_x & b_y}_{\xi_0}$
is invertible, that is $a_xb_y-a_yb_x\ne 0$.
Now, the eigenvalues $\mu_{1,2}$ of the pencil are given by \eqref{mus}:
$\mu_{1,2}(x)=\frac{-b d\pm \sqrt{h(x)}}{1-d^2}$, with
$h(x)=b^2+(1-d^2) a^2$.   Now, expand the function $h(x)$ at $\xi_0$.  We get
$$h(x)=h(\xi_0)+\nabla h(\xi_0) (x-\xi_0)+\frac{1}{2}(x-\xi_0)^TH(\xi_0)(x-\xi_0)+\dots \,,$$
and a simple computation gives $h(\xi_0)=0$, $\nabla h(\xi_0)=0$, and
$$H(\xi_0)=2\bmat{b_x^2+a_x^2(1-d^2) & b_xb_y-a_xa_y(1-d^2) \\
b_xb_y-a_xa_y(1-d^2)  & b_y^2+a_y^2(1-d^2) }$$
so that at $\xi_0$: $H_{11}>0$, $H_{22}>0$, and $\det(H(\xi_0))=(1-d^2)(b_xa_y-a_xb_y)^2$
and this is positive, because of the previously remarked transversality.  Therefore,
$H(\xi_0)$ is positive definite,
and in the vicinity of $\xi_0$ the eigenvalues have the
form $\mu_{1,2} =
\frac{-b d\pm \sqrt{\|z\|^2+O(\|x-\xi_0\|^4)}}{1-d^2}$
where $z=H^{1/2}(\xi_0)\ (x-\xi_0)$.  As a consequence, the eigenvalues's surface have
a double cone structure at the coalescing point. This justifies calling the coalescing
point a {\emph{conical intersection}}, or CI for short.
\end{exm}

Obviously, there is a total loss of differentiability through a CI point.  Recall that
the applications motivating our study is dimension reduction through a projection approach;
but then CIs are particularly
bothersome since the projection looses uniqueness at a CI point.  For this reason, in this work
we emphasize detecting parameter values where CIs occur, in particular we give criteria that
enable detection of generic CIs.  The case of a $(2,2)$ pencil was dealt with in Theorem \ref{2x2Coal}.
The case of a $(n,n)$ pencil, with only a single generic coalescing of eigenvalues in $\Omega$ 
is dealt with in the next theorem.

\begin{thm} \label{nxncase}
Let $A=A^T\in\cont^k(\Omega,\Rnxn)$, 
$B=B^T\in\cont^k(\Omega,\Rnxn)\succ 0$, and let 
$\lambda_1(x),\ldots,\lambda_n(x)$, $x\in \Omega$, be the continuous
eigenvalues of the pencil $(A,B)$.  Assume that
$$\lambda_1(x)>\lambda_2(x)>\ldots>\lambda_j(x)\ge\lambda_{j+1}(x)>
\ldots>\lambda_n(x)\ ,\, \forall x\in \Omega\ ,$$
and
$$\lambda_j(x)=\lambda_{j+1}(x)\Longleftrightarrow x=\xi_0\in \Omega\ ,$$
where $\xi_0$ is a generic coalescing point.

Let $\Gamma$ be a simple closed curve in $\Omega$
enclosing the point $\xi_0$, and let it
be parametrized as a $\cont^p$ ($p\ge 0$) function $\gamma$ in the variable $t$,
so that the function $\gamma:\ t\in \R \to \Omega$ is $\cont^p$ and $1$-periodic.
Let $m=\min(k,p)$, and let $A_{\gamma}$, $B_{\gamma}$ be the $\cont^m$ 
restrictions of $A, B$, to $\gamma(t)$, $t\in \R$.

Then, for all $t\in \R$, the pencil $(A_{\gamma}, B_{\gamma})$ admits the diagonalization
$A_{\gamma}(t)V_\gamma(t)=B_{\gamma}(t)V_\gamma(t)\Lambda(t)$, where
\begin{itemize}
\item[(i)] $\Lambda\in\mathcal{C}^m(\R,\Rnxn)$, $\Lambda(t+1)=\Lambda(t)$, and
$\Lambda(t)=\diag(\lambda_1(t),\ldots,\lambda_{n}(t))$,
$\forall t\in \R$;
\item[(ii)] $V_\gamma\in\mathcal{C}^m(\R,\Rnxn)$ is $B$-orthogonal, and
$$V_\gamma(t+1)=V_\gamma(t) D\ ,\,\
D=\bmat{I_{j-1} & 0 & 0\\ 0 & -I_2 & 0\\
0 & 0 & I_{n-j-1}}\ .$$
\end{itemize}
\end{thm}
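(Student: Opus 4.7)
The strategy is to use Theorem \ref{BlockDiag} on a closed rectangle around $\xi_0$ to isolate the coalescing pair in a $2\times 2$ sub-pencil, apply Theorem \ref{2by2} to that sub-pencil along $\Gamma$, dispose of the remaining eigenvalues via the distinct-eigenvalue smoothness theory, and finally assemble the three pieces while tracking the monodromy of the eigenvectors around $\Gamma$.

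First, choose a closed rectangle $R\subset\Omega$ whose interior contains both $\xi_0$ and $\Gamma$. By the ordering hypothesis, the spectrum of $(A,B)$ on $R$ splits into three pairwise disjoint sets: $\{\lambda_1,\dots,\lambda_{j-1}\}$, $\{\lambda_j,\lambda_{j+1}\}$ (which meet only at $\xi_0$), and $\{\lambda_{j+2},\dots,\lambda_n\}$. Applying Theorem \ref{BlockDiag} first to peel the top group off the rest, and then to separate the middle pair from the bottom group, I obtain a $B$-orthogonal $V_0 \in \cont^k(R,\Rnxn)$ with
$$V_0^T A V_0 = \diag(\Lambda_1, \tilde A, \Lambda_2), \qquad V_0^T B V_0 = \diag(I_{j-1}, \tilde B, I_{n-j-1}),$$
where $(\tilde A, \tilde B)$ is a $\cont^k$ symmetric-definite $2\times 2$ pencil with eigenvalues $\lambda_j,\lambda_{j+1}$, and the outer blocks carry the distinct groups. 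Since the eigenvalues in each outer group are pairwise strictly separated throughout $R$, iterating Theorem \ref{BlockDiag} inside those blocks I may further arrange that $\Lambda_1,\Lambda_2$ are literally diagonal in $\cont^k(R,\cdot)$, with the associated blocks of $V_0^TBV_0$ equal to the identity.

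Next, I pull back to the curve: $\bar V_0(t):=V_0(\gamma(t))$ is $\cont^m$ and $1$-periodic by Corollary \ref{SimplePeriod1}. On the middle block, the pencil $(\tilde A(\gamma(t)), \tilde B(\gamma(t)))$ is symmetric-definite $2\times 2$ with its unique coalescing point $\xi_0$ inside $\Gamma$ and transversal intersection of the defining curves (this is exactly the content of $\xi_0$ being a generic coalescing point in the sense of Definition \ref{TransCoalEVs}, transported through $V_0$). Hence Theorem \ref{2by2} furnishes a $\cont^m$, $\tilde B$-orthogonal eigenvector function $V_2$ with $V_2(t+1)=-V_2(t)$ diagonalizing the block into $\diag(\lambda_j,\lambda_{j+1})$. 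Setting
$$V_\gamma(t) \;=\; \bar V_0(t)\,\bmat{I_{j-1} & 0 & 0 \\ 0 & V_2(t) & 0 \\ 0 & 0 & I_{n-j-1}},$$
one checks directly that $V_\gamma$ is $\cont^m$, that $V_\gamma^T B_\gamma V_\gamma = I$, and that $A_\gamma V_\gamma = B_\gamma V_\gamma \Lambda$ with $\Lambda(t)=\diag(\lambda_1(\gamma(t)),\dots,\lambda_n(\gamma(t)))$, which is $\cont^m$ and $1$-periodic. Using $\bar V_0(t+1)=\bar V_0(t)$ and $V_2(t+1)=-V_2(t)$, factoring the sign flip out of the middle block yields $V_\gamma(t+1)=V_\gamma(t)\,D$ with $D=\diag(I_{j-1},-I_2,I_{n-j-1})$.

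The main obstacle is the iterated use of Theorem \ref{BlockDiag}: a single invocation only delivers a two-way split, and one must verify that nesting it inside the outer blocks still yields a globally $\cont^k$ and $B$-orthogonal $V_0$ on the whole of $R$. This goes through because at each iteration the relevant sub-pencil is smooth, symmetric-definite, and has strictly separated spectrum on $R$ (the only coalescence, by hypothesis, sits in the middle $2\times 2$ block), so the hypotheses of Theorem \ref{BlockDiag} are met, and the product of $B$-orthogonal $\cont^k$ factors remains $B$-orthogonal and $\cont^k$. A secondary subtlety is to confirm that the twist recorded in $D$ comes solely from the middle factor: since $R$ is simply connected and $V_0$ is single-valued and smooth on $R$, the restriction $\bar V_0$ is automatically $1$-periodic, so the only monodromy around $\Gamma$ is the sign flip produced by Theorem \ref{2by2}, giving exactly the stated $D$.
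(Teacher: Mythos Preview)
Your overall strategy---block-diagonalize via Theorem \ref{BlockDiag} to isolate the coalescing $2\times 2$ block, apply Theorem \ref{2by2} to that block along the loop, and assemble $V_\gamma=\bar V_0\cdot\diag(I_{j-1},V_2,I_{n-j-1})$---is exactly the paper's. The one genuine gap is your opening move: you assume a closed rectangle $R\subset\Omega$ exists whose interior contains both $\xi_0$ \emph{and the entire curve $\Gamma$}. Nothing in the hypotheses guarantees this; $\Omega$ is merely open and connected, so even when $\Gamma$ together with its Jordan interior lies in $\Omega$, any axis-aligned rectangle enclosing $\Gamma$ may protrude outside $\Omega$ (take $\Omega$ a disk and $\Gamma$ a circle nearly filling it). Since Theorem \ref{BlockDiag} is stated only for rectangles, your global $V_0$ may simply fail to exist on the region you need, and then Theorem \ref{2by2} cannot be invoked on $(\tilde A,\tilde B)$ along $\Gamma$.

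The paper sidesteps this by taking $R$ \emph{small}, just a rectangle around $\xi_0$, and first establishing the monodromy on a small circle $C\subset R$; it then passes from $C$ to the arbitrary curve $\Gamma$ by a homotopy argument, deferring to \cite[Theorem~2.8]{DiPu1}. Your more direct route is cleaner whenever the large rectangle happens to exist, since it dispenses with this deformation step entirely; to cover the general case you would either need the simply-connected-region version of the Hsieh--Sibuya/Gingold block-diagonalization (which does hold, and would let you take $R$ to be the closed region bounded by $\Gamma$), or else graft the paper's circle-then-deform argument onto the end of your construction.
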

\begin{proof}
The proof combines the block-diagonalization result Theorem \ref{BlockDiag} with the $(2,2)$ case.

So, we consider a rectangle $R\subseteq \Omega$ around $\xi_0$, and consider a $B$-orthogonal function
$V\in \cont^e(R,\Rnxn)$ giving the block decomposition of Definition \ref{TransCoalEVs}
\begin{equation*}\begin{split}
& V^T(x)A(x)V(x)=\bmat{\Lambda_1(x) & 0 & 0 \\ 0 & \tilde A(x) & 0 \\ 0 & 0 & \Lambda_2(x)}\ ,\,\, \text{and} \\
& V^T(x)B(x)V(x)=\bmat{I_{j-1} & 0 & 0 \\ 0 & \tilde B(x) & 0 \\ 0 & 0 & I_{n-j-1}}\ ,\,\ 
\forall x\in R\ .
\end{split}\end{equation*}
Let $C$ be a circle enclosing $\xi_0$
and contained in $R$, parametrized by a continuous $1$-periodic
function $\rho$, and let $\tilde A_\rho(t)=\tilde A(\rho(t))$, $\tilde B_\rho(t)=\tilde B(\rho(t))$, 
$t\in \R$.
%  see Figure \ref{figure2}.
% 
% 
% \begin{figure}
%   \center
%   \psfrag{gamma}{$\Gamma$}
%   \psfrag{R}{$R$}
%   \psfrag{x0}{$\xi_0$}
%   \psfrag{C}{$C$}
%   \psfrag{b}{$b(x)=0$}
%   \psfrag{a-d}{$a(x)-d(x)=0$}
%   \includegraphics[width=3.25in]{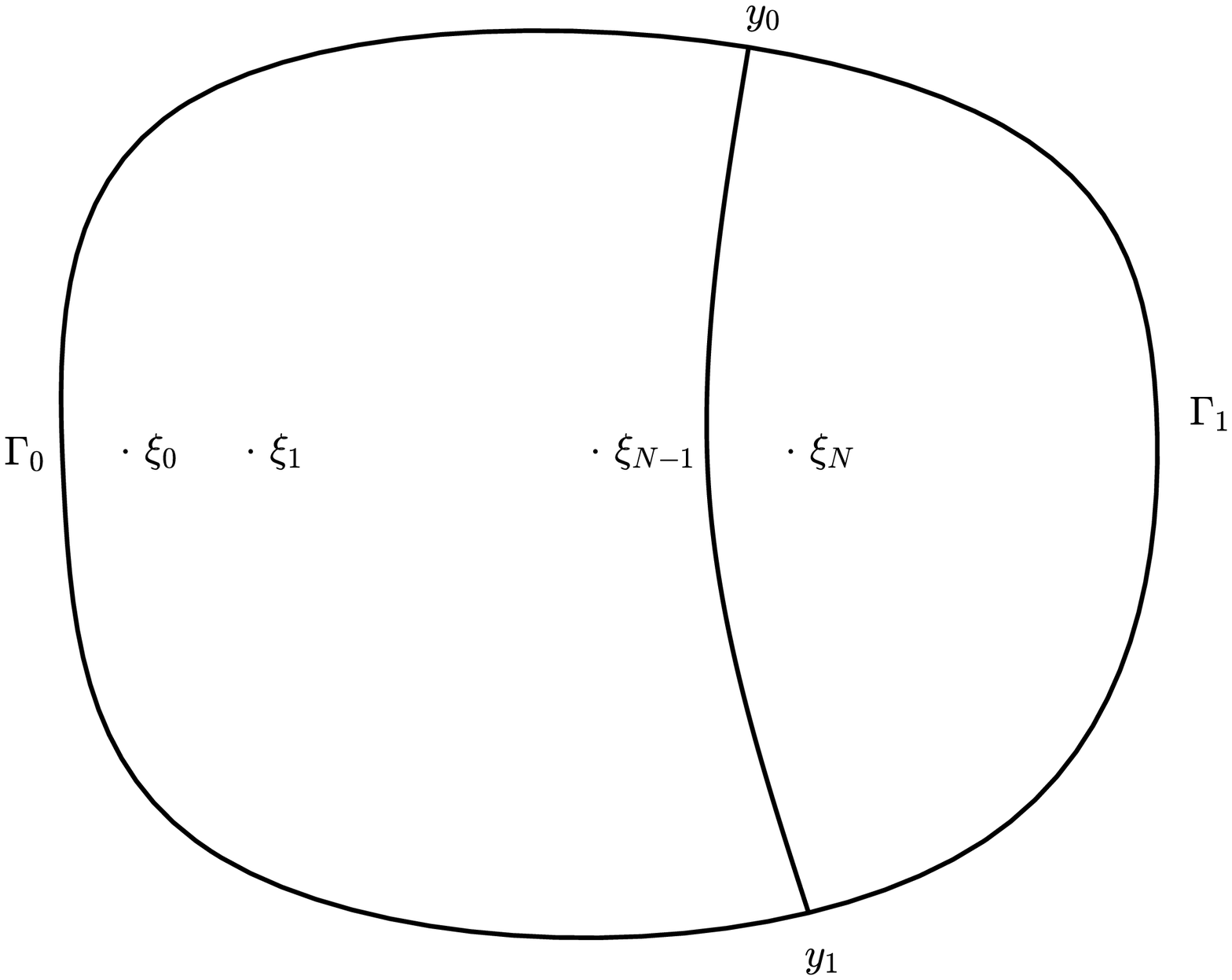}
%   \caption{Generic coalescing at $\xi_0$. }
%   \label{figure2}
% \end{figure}
% 
Let $V_\rho$ be the orthogonal function of Theorem \ref{2x2Coal}
associated to the pencil $(\tilde A_\rho, \tilde B_\rho)$, so that 
$V_\rho(t+1)=-V_\rho(t)$, for all $t$, and moreover $V_\rho^TB_\rho V_\rho=I_2$.
Now, consider the following continuous function
$$V(\rho(t))\bmat{I_{j-1} & 0 & 0\\ 0 & V_\rho(t) & 0\\
0 & 0 & I_{n-j-1}}\,,$$
Since $V(\rho(t+1))=V(\rho(t))$ for all $t$,
then the result follows relative to the circle $C$.
% $$U_\rho(t+1)\ =\ U_\rho(t) \bmat{I_{k-1} & 0 & 0\\ 0 & -I_2 & 0\\
% 0 & 0 & I_{n-k-1}}\ .$$
The argument that the same periodicity properties hold relative to
the simple closed curve $\Gamma$ follow similarly to what we did in the proof
of \cite[Theorem 2.8]{DiPu1}.
% .
% We do this in the same way as what we did in Theorem \ref{Symm2by2}.
% Take a homotopy $h(s,t)$, $(s,t)\in[0,1]\times[0,1]$, as in Theorem \ref{Symm2by2},
% and consider the function $A(h(s,t))$, $(s,t)\in[0,1]\times[0,1]$.
% $A(h(s,t))$ is continuous with distinct eigenvalues for all $(s,t)\in[0,1]\times[0,1]$,
% and so --by Theorem \ref{BlockDiag}-- we can write $A(h(s,t))=V(s,t)\Lambda(s,t)V^T(s,t)$, where
% $\Lambda(s,t)$ and $V(s,t)$ are continuous, $\Lambda(s,t)$ is diagonal, and
% $V(s,t)$ is real orthogonal.  Partition $V$ by columns:
% $V(s,t)=\bmat{v_1(s,t) & \cdots & v_n(s,t)}$.
% Let $f_j(s)=v_j^T(s,0)\,v_j(s,1)$, for $j=1,\dots, n$.
% Since $h(s,0)=h(s,1)$ for all $s\in[0,1]$,
% we have that all $f_j$'s take values in $\{-1, 1\}$.
% Being continuous, we must have
% $f_j(0)=f_j(1)=1$ for all $j\ne k,k+1$, and $f_j(0)=f_j(1)=-1$ for $j=k,k+1$,
% from which the result follows.
\end{proof}

% Theorem \ref{nxncase} has an interesting geometric interpretation: the
% $\cont^m$ eigenvectors of $A_\gamma$ corresponding to the
% eigenvalues coalescing at $\xi_0$ (i.e., the $k$-th and $(k+1)$-st eigenvectors),
% ``get upside down'' as we complete one loop along the closed curve $\Gamma$.
It is worth emphasizing that for the eigenvectors associated
to eigenvalues which do not coalesce inside $\Omega$, we have
$v_\gamma(t+1)=v_\gamma(t)$.  In other words, 
a continuous eigendecomposition $V$ along a simple curve $\Gamma$
not containing coalescing points inside (or on) it, satisfies $V(t+1)=V(t)$.
This consideration, coupled with the uniqueness up to sign of a $B$-orthogonal
function eigendecomposing a pencil with distinct eigenvalues, gives the following.

\begin{cor}\label{NoCoalPeriod1}
Let $(A,B)$ be a $\cont^k$ symmetric-positive definite pencil for all $x\in \Omega$.
Let $\Gamma$ be a simple closed curve in $\Omega$,
parametrized by the
$\cont^p$ and $1$-periodic function $\gamma$.  Let $m=\min(k,p)$, and  
let $(A_\gamma, B_\gamma)$ be the smooth pencil restricted to $\Gamma$.
If there are no coalescing points inside $\Gamma$ (nor on it), 
then any $\cont^m$ eigendecomposition $V$ of the pencil $(A_\gamma, B_\gamma)$
satisfies $V(t+1)=V(t)$.
\end{cor}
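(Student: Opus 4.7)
My plan is to build one smooth $B$-orthogonal eigendecomposition along $\Gamma$ that is manifestly $1$-periodic, and then appeal to Corollary~\ref{UniqueV} to promote this property to every $\cont^m$ eigendecomposition.

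Because the eigenvalues of $(A_\gamma,B_\gamma)$ are distinct for all $t$, Theorem~\ref{Smoothness} yields a $\cont^m$, $B_\gamma$-orthogonal eigendecomposition $V$ along $\Gamma$. Since $A_\gamma,B_\gamma$ are $1$-periodic, both $V(t)$ and $V(t+1)$ are $B_\gamma(t)$-orthogonal eigendecompositions of the same pencil at time $t$, so Corollary~\ref{UniqueV} forces $V(t+1)=V(t)\,D$ for some constant diagonal sign matrix $D=\diag(\pm 1)$. The whole problem is therefore to show $D=I$.

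For this, I will use that the compact closed region $\bar\Delta$ bounded by $\Gamma$ is contained in the open set where all eigenvalues are distinct (the only place where the ``no coalescings inside $\Gamma$ nor on it'' hypothesis is used). Pick an interior point $x_0\in\bar\Delta$ and a continuous family of $\cont^p$, $1$-periodic loops $\gamma_s$, $s\in[0,1]$, with $\gamma_0=\gamma$, $\gamma_1\equiv x_0$, and every $\gamma_s\subset\bar\Delta$; such a family exists because the topological interior of a simple closed curve in $\R^2$ is simply connected. For each $s$ apply Theorem~\ref{Smoothness} to the pencil along $\gamma_s$, after choosing initial data $V^{(s)}(0)$ continuously in $s$, which is possible since the curve $s\mapsto\gamma_s(0)$ lies in the distinct-eigenvalue region, where local $B$-orthogonal diagonalizers exist by Theorem~\ref{BlockDiag}. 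Continuous dependence of the solution of \eqref{DEforV} on parameters and initial data yields a jointly continuous family $V^{(s)}(t)$. Define $D(s)$ by $V^{(s)}(t+1)=V^{(s)}(t)D(s)$; this gives a continuous, discrete-valued map $s\mapsto D(s)$, which must be constant on $[0,1]$. At $s=1$ the loop is constant, so $V^{(1)}$ is constant in $t$ and $D(1)=I$; hence $D=D(0)=I$ and $V(t+1)=V(t)$.

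Finally, for any other $\cont^m$ $B_\gamma$-orthogonal eigendecomposition $V'$ of $(A_\gamma,B_\gamma)$ along $\Gamma$, Corollary~\ref{UniqueV} gives $V'(t)=V(t)\,S(t)$ with $S(t)$ diagonal of $\pm 1$ entries; continuity forces $S$ to be a constant matrix, and so $V'(t+1)=V(t+1)\,S=V(t)\,S=V'(t)$, completing the proof. The hard part is verifying the joint continuity of the family $V^{(s)}(t)$; I expect this to follow from standard continuous-dependence arguments for the ODE system \eqref{DEforV} together with the continuous choice of initial data in the open distinct-eigenvalue region, but it is the step that needs the most care in a fully detailed write-up.
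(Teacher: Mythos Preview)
Your argument is correct in outline, but it takes a genuinely different and more elaborate route than the paper.

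The paper's approach is much more direct: since there are no coalescing points in the closed region $\bar\Delta$ bounded by $\Gamma$, all $n$ eigenvalues are distinct there, and Theorem~\ref{BlockDiag} (applied with $n$ blocks of size $1$) produces a $\cont^k$, $B$-orthogonal $V$ defined as a function of the \emph{two-dimensional} variable $x$ on that region. The restriction $V_\gamma(t)=V(\gamma(t))$ is then $1$-periodic simply because it is a composition with the $1$-periodic map $\gamma$; this is exactly Corollary~\ref{SimplePeriod1} and Remark~\ref{DistinctAndPeriodic}. Uniqueness up to column signs (Corollary~\ref{UniqueV}) then transfers $1$-periodicity to every $\cont^m$ eigendecomposition, just as in your last paragraph.

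What you do instead is work purely along $1$-dimensional loops: you build $V$ only on $\Gamma$, obtain the monodromy $D$, and then kill $D$ by a homotopy-of-loops argument, invoking continuous dependence for the ODE system~\eqref{DEforV} to get joint continuity of $V^{(s)}(t)$ and hence of the discrete-valued $D(s)$. This is a legitimate and conceptually pleasing argument---it makes explicit the topological reason (simple connectedness of $\bar\Delta$) behind the result---but it costs you the ODE continuous-dependence step and the construction of a continuous family of initial data, both of which you yourself flag as needing care (and which also tacitly require $m\ge 1$ for~\eqref{DEforV} to make sense). The paper sidesteps all of this by lifting $V$ to the $2$-dimensional region in one stroke via Theorem~\ref{BlockDiag}, after which periodicity is automatic.
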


\subsection{Several generic coalescing points in $\Omega$}

Here we consider the case when several eigenvalues of the pencil
coalesce inside a closed curve $\Gamma$. In line with our previous analysis of generic cases,
we only consider the case when coalescing points are {\emph{isolated}} and generic, as
characterized next.

\begin{defn}\label{GenCoal}
Consider the pencil $(A,B)$, with $A=A^T\in \cont^k(\Omega, \Rnxn)$ and
$B=B^T\in \cont^k(\Omega, \Rnxn)\succ 0$, $k\ge 1$. 
A parameter value $\xi_0\in \Omega$ is called a 
{\em generic coalescing point of eigenvalues} if there is a pair of equal eigenvalues
at $\xi_0$, no other pair of eigenvalues coalesce
inside an open simply connected region
$\Omega_0\subseteq \Omega$, 
and $\xi_0$ is a generic coalescing point of eigenvalues in $\Omega_0$.
\end{defn}

In these cases, we have the following result.

\begin{thm} \label{PencilAny}
Consider the pencil $(A,B)$, where
$A=A^T\in\cont^k(\Omega,\Rnxn)$, 
$B=B^T\in\cont^k(\Omega,\Rnxn)\succ 0$, and let 
$\lambda_1(x)\ge\ldots\ge\lambda_n(x)$ be its continuous eigenvalues.
Assume that
for every $i=1,\ldots,n-1$,
$$\lambda_{i}(x)=\lambda_{i+1}(x)$$ at $d_i$ distinct
generic coalescing points in $\Omega$, so that there are
$\sum_{i=1}^{n-1}d_i$ such points\footnote{Of course, some $d_i$'s may be $0$}.
Let $\Gamma$ be a simple closed curve in $\Omega$ enclosing all of these
distinct generic coalescing points of eigenvalues, and let it be
parametrized as a $\cont^p$ ($p\ge 0$) function $\gamma$ in the variable $t$,
so that the function $\gamma:\ t\in \R \to \Omega$ is $\cont^p$ and $1$-periodic.
Let $m=\min(k,p)$ and let $A_\gamma$ and $B_\gamma$ be the $\cont^m$ restrictions of
$A$ and $B$ to $\gamma(t)$.    Then, for all $t\in \R$, there exists
$V$ diagonalizing the pencil $(A_\gamma, B_\gamma)$: $A_{\gamma}V=B_{\gamma}V\Lambda$,
where
\begin{itemize}
\item[(i)] $\Lambda\in\mathcal{C}^m(\R,\Rnxn)$ is diagonal:
$\Lambda=\diag\bigl(\lambda_1(t), \dots, \lambda_n(t)\bigr)$,
for all $t\in \R$, and $\Lambda(t+1)=\Lambda(t)$;
\item[(ii)] $V\in\mathcal{C}^m(\R,\Rnxn)$ is 
$B_\gamma$-orthogonal, with 
$$V(t+1)=V(t)\ D\ ,\,\, \forall t\in \R\ ,$$
where $D$ is a diagonal matrix of $\pm 1$ given as follows:
$$D_{11}=(-1)^{d_1},\ \
D_{ii}=(-1)^{d_{i-1}+d_i}\ \mathrm{for}\ i=2,\ldots,n-1,\ \
D_{nn}=(-1)^{d_{n-1}}\ .$$
In particular, if $D=I$, then $V$ is
1-periodic, otherwise it is 2-periodic with minimal period $2$.
\end{itemize}
\end{thm}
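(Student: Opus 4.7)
The plan is to combine the single-coalescing monodromy result of Theorem \ref{nxncase} with a homotopy decomposition of the loop $\Gamma$. First, I would construct a $\cont^m$ $B_\gamma$-orthogonal diagonalizing pair $(V,\Lambda)$ along $\Gamma$ by continuation. Since $\Gamma$ avoids every coalescing point, the eigenvalues $\lambda_j(\gamma(t))$ remain distinct for all $t$, so Theorem \ref{Smoothness} applied to the one-parameter restriction yields local $\cont^m$ $B_\gamma$-orthogonal eigendecompositions, which patch together globally by the uniqueness-up-to-signs statement in Corollary \ref{UniqueV}. The $\cont^m$ $1$-periodicity of $\Lambda$ then follows from Corollary \ref{SimplePeriod1}, leaving the monodromy matrix $D$ in $V(t+1)=V(t)D$ as the only quantity to identify.

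To pin down $D$, I would use a monodromy argument in the simply connected region $\Omega_0$ enclosed by $\Gamma$, punctured at the $K=\sum_{i=1}^{n-1}d_i$ coalescing points $\xi_0^{(1)},\dots,\xi_0^{(K)}$. Within this punctured region the eigenvalues remain distinct everywhere, and by the uniqueness-up-to-column-signs of a $B$-orthogonal $V$ combined with Corollary \ref{NoCoalPeriod1}, the monodromy along any loop depends only on its homotopy class. The fundamental group is freely generated by small loops around each $\xi_0^{(\ell)}$, so $\Gamma$ is homotopic to a concatenation of small circles $C_1,\dots,C_K$ around the coalescings, joined by auxiliary arcs whose contribution is trivial (applying Corollary \ref{NoCoalPeriod1} to the simply connected subregions cut out by the arcs). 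Hence $D=\prod_\ell D_\ell$, where $D_\ell$ is the monodromy around $C_\ell$.

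By Theorem \ref{nxncase}, if $C_\ell$ encloses a generic coalescing of $\lambda_{i_\ell}$ with $\lambda_{i_\ell+1}$, then $D_\ell=\diag(I_{i_\ell-1},-I_2,I_{n-i_\ell-1})$; these diagonal $\pm 1$ matrices commute, so the order in the product is irrelevant. Column $j$ of $V$ flips sign once for each $\ell$ with $i_\ell\in\{j-1,j\}$, and by hypothesis there are $d_{j-1}$ of the first type and $d_j$ of the second (with the convention $d_0=d_n=0$), yielding $D_{jj}=(-1)^{d_{j-1}+d_j}$, which reduces to $(-1)^{d_1}$ for $j=1$ and to $(-1)^{d_{n-1}}$ for $j=n$. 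The minimal-period claim is then immediate: if $D=I$, then $V$ is $1$-periodic, otherwise at least one column flips sign over $[0,1]$ so $V$ has minimal period $2$. The main obstacle I expect is rigorously executing the homotopy step --- verifying that the bouquet decomposition of $\Gamma$ introduces no spurious sign changes --- which I would handle exactly as in the final part of the proof of Theorem \ref{nxncase} and in \cite[Theorem 2.8]{DiPu1}, by cutting the enclosed region into simply connected pieces avoiding coalescings and chaining local monodromies.
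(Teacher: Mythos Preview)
Your proposal is correct and follows essentially the same strategy as the paper's proof, which proceeds by induction on the total number $N=\sum_i d_i$ of coalescing points: at each inductive step one coalescing point is separated from the remaining $N-1$ by an auxiliary arc $\alpha$, producing exactly your bouquet-of-loops decomposition one factor at a time. The only difference is framing --- you invoke the fundamental group of the punctured region and multiply all $K$ local monodromies at once, whereas the paper peels off one small loop per induction step --- but the key ingredients (Theorem \ref{nxncase} for each small loop, trivial monodromy along the connecting arcs, and commutativity of the diagonal $\pm 1$ matrices) are identical.
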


\begin{proof}
Since the eigenvalues are distinct on $\Gamma$, we know that there is a $\cont^m$ eigendecomposition
$V$ of the pencil $(A_\gamma, B_\gamma)$, and that $V$ is $B_\gamma$-orthogonal.
The issue is to establish the periodicity of $V$.  Our proof is 
by induction on the number of coalescing points.  

Because of Theorem \ref{nxncase}, we know that the result is true for $1$ coalescing point.
So, we assume that the result holds
for $N-1$ distinct generic coalescing points, and we'll show it for $N$ distinct generic
coalescing points; note that $N=\sum_{i=1}^{n-1}d_i$.

Since the coalescing points are distinct, we can always separate
one of them, call it $\xi_N$, from the other $N-1$ points,
with a curve $\alpha$ not containing coalescing
points, and which stays inside the region bounded by $\Gamma$,
joining two distinct points on $\Gamma$,
$y_0=\gamma(t_0)$ and 
$y_1=\gamma(t_1)$, with $t_0, t_1\in[0,1)$, so that $\alpha$ leaves
$\xi_{N}$ and all other coalescing points $\xi_{i}$'s on opposite sides
(see Figure \ref{figure2}). Let $j,\  1\le j\le n-1$, be the index for which
$\lambda_{j}(\xi_N)=\lambda_{j+1}(\xi_N)$.

\begin{figure}
  \center
%  \psfrag{gamma1}{$\Gamma_0$}
%  \psfrag{gamma2}{$\Gamma_1$}
%  \psfrag{x0}{$\cdot\ \xi_0$}
%  \psfrag{x1}{$\cdot\ \xi_1$}
%  \psfrag{x2}{$\cdot\ \xi_2$}
%  \psfrag{xn-1}{$\cdot\ \xi_{N-1}$}
%  \psfrag{xn}{$\cdot\ \xi_N$}
%  \psfrag{y0}{$y_0$}
%  \psfrag{y1}{$y_1$}
%  \psfrag{dots}{$\ldots$}
%  \psfrag{alpha}{$\alpha$}
  \includegraphics[width=.75\textwidth]{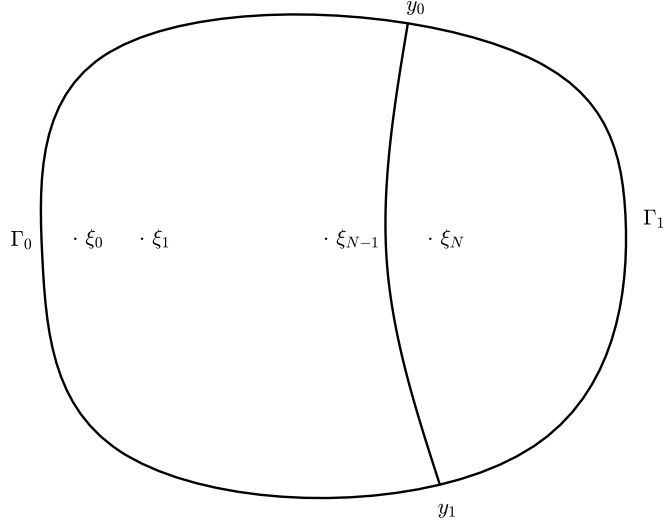}
  \caption{Figure for proof of Theorem
  \ref{PencilAny}}
  \label{figure2}
\end{figure}

Now consider the following construction.  
Take a smooth eigendecomposition of $(A_\gamma,B_\gamma)$ along
$\Gamma$, starting at $y_0$ and returning to it; the loop is done
once, and to fix ideas, we will transverse it in the counterclockwise direction.
Denote the continuous matrix of eigenvectors of $(A_\gamma, B_\gamma)$ at the beginning
of this loop as $V_0$ and that at the end of the loop as $V_1$.

Since the curve $\alpha$ does not contain any coalescing
point, the matrix $V_1$ would be the same as if, instead of
following the curve $\Gamma$, we were to follow $\Gamma_0$ from $y_0$ to $y_1$, then go from $y_1$ to
$y_0$ along $\alpha$, back from $y_0$ to $y_1$ along $\alpha$ in opposite direction
and then from $y_1$ to $y_0$ along $\Gamma_1$: 
$(\Gamma_0\cup \alpha)\cup((-\alpha)\cup\Gamma_1)$. Denote the
matrix of eigenvectors of $(A_\gamma, B_\gamma)$ at the end of the first loop 
$(\Gamma_0\cup \alpha)$ by $V_{\frac12}$.
Using the induction hypothesis along the closed
curve $\Gamma_0\cup \alpha$, we have
$$V_0=V_{\frac12}\hat D\,,$$
where $\hat D $ is a diagonal matrix $\hat D=\diag(\hat D_{11},\dots \hat D_{nn})$, with
$$\hat D_{11}=(-1)^{\hat d_1},\ \
\hat D_{ii}=(-1)^{\hat d_{i-1}+\hat d_i}\ \mathrm{for}\ i=2,\ldots,n-1,\ \
\hat D_{nn}=(-1)^{\hat d_{n-1}}\ $$
and $\hat d_i=d_i$, for all $i\ne j$, and $\hat d_j=d_j-1$.  Now, 
by looking at what happens on the second loop, by virtue of Theorem
\ref{nxncase}, we have that all columns of
$V_{\frac12}$ coincide with those of $V_1$, except for the $j$-th and $(j+1)$-st ones
which have changed in sign. Putting everything
together, we have $V_0=V_1D$ with $D$ as given in the statement of the Theorem.
\end{proof}

We do not study nongeneric coalescings, since they are not robust under perturbation;
see \cite{DiPu1} for considerations on these cases, for the symmetric eigenproblem.
With this in mind, in the final result we give we should think of all coalescings as
being generic CIs.  This theorem gives us a sufficient condition for the
existence of CIs inside a certain region.  This is the result on which we base our
numerical algorithm to detect CIs.

\begin{thm}\label{SuffCI}
Consider the pencil $(A,B)$, where
$A=A^T\in\cont^k(\Omega,\Rnxn)$, 
$B=B^T\in\cont^k(\Omega,\Rnxn)\succ 0$, and
let $\lambda_1(x)\ge\ldots\ge\lambda_n(x)$ be its continuous eigenvalues.
Let $\Gamma$ be a simple closed curve in $\Omega$
with no coalescing point for the eigenvalues on it, and let it be
parametrized as a $\cont^p$ ($p\ge 0$) function $\gamma$ in the variable $t$,
so that the function $\gamma:\ t\in \R \to \Omega$ is $\cont^p$ and $1$-periodic.
Let $m=\min(k,p)$ and let $A_\gamma$ and $B_\gamma$ be the $\cont^m$ restrictions of
$A$ and $B$ to $\gamma(t)$, and let $V$ diagonalize the pencil $(A_\gamma, B_\gamma)$.
Let $V_0=V(0)$ and $V_1=V(1)$, and define $D$ such that $V_0D=V_1$.

Next, let $2q$ be the
even\footnote{The reason for the even number of indices is that
$A_{\gamma}(t)V(t)=B_{\gamma}(t)V(t)\Lambda(t)$,
and $V(t+1)=V(t)\ D$. Since $V$ is continuous and invertible, then its determinant is
always positive or negative. But, since $V(1)=V(0)D$, then we must have
$\det(D)=1$}
number of indices $i_i$, $i_1<i_2<\cdots <i_{2q}$,
for which $D_{i_ii_i}=-1$. Let us
group these indices in pairs $(i_1,i_2),\dots,(i_{2q-1},i_{2q})$.
Then, $\lambda_i$ and $\lambda_{i+1}$ coalesced at least once
inside the region encircled by $\Gamma$, if 
$i_{2j-1}\le i < i_{2j}$ for some $j=1,\ldots,q$.
\end{thm}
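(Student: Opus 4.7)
The plan is to reduce the statement to a direct parity count by invoking Theorem \ref{PencilAny}. Since we are adopting the convention, stated just before the theorem, that all coalescings inside $\Gamma$ are generic CIs, for each adjacent pair $(\lambda_i, \lambda_{i+1})$ let $d_i \ge 0$ denote the number of generic coalescings of $\lambda_i$ with $\lambda_{i+1}$ encircled by $\Gamma$. Theorem \ref{PencilAny} then tells us that the monodromy matrix $D$ defined by $V_1 = V_0 D$ is diagonal with
\[
D_{11} = (-1)^{d_1},\ \ D_{ii} = (-1)^{d_{i-1}+d_i}\ (2 \le i \le n-1),\ \ D_{nn} = (-1)^{d_{n-1}}.
\]

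Next I would recast this in a way that makes the combinatorics transparent. Set $e_i := d_i \bmod 2$ for $i = 1, \ldots, n-1$ and extend by the ``boundary'' values $e_0 = e_n = 0$. Then all three formulas above collapse into the single expression $D_{ii} = (-1)^{e_{i-1}+e_i}$, so $D_{ii} = -1$ if and only if the parity changes between positions $i-1$ and $i$ in the sequence $e_0, e_1, \ldots, e_n$. Therefore the indices $i_1 < i_2 < \cdots < i_{2q}$ singled out in the statement are precisely the positions at which the parity sequence $(e_i)$ flips. Incidentally, this also explains the footnote: since the sequence begins and ends at $0$, the number of flips must be even, confirming $\det D = +1$.

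Since $e_0 = e_n = 0$, the flips partition $\{0, 1, \ldots, n\}$ into alternating runs on which $e$ is constant: $e_i = 0$ for $0 \le i < i_1$, $e_i = 1$ for $i_1 \le i < i_2$, $e_i = 0$ for $i_2 \le i < i_3$, and so on. Hence $e_i = 1$ holds exactly when $i_{2j-1} \le i < i_{2j}$ for some $j \in \{1, \ldots, q\}$. For any such $i$, $d_i$ is odd, in particular $d_i \ge 1$, which is exactly the assertion that $\lambda_i$ and $\lambda_{i+1}$ coalesce at least once inside $\Gamma$.

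I expect the main obstacle to be cosmetic rather than substantive: one must cleanly fold the boundary conventions $e_0 = e_n = 0$ (encoding the fact that there are no phantom $d_0$ or $d_n$ contributions) into the pattern so that the endpoint formulas for $D_{11}$ and $D_{nn}$ fit the uniform form $D_{ii} = (-1)^{e_{i-1}+e_i}$. Once this accounting is fixed, the proof is pure combinatorics on the parity sequence and the real content of the theorem is entirely supplied by Theorem \ref{PencilAny}.
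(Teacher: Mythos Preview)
Your argument is correct. The paper does not give an explicit proof of Theorem~\ref{SuffCI}; it is presented as an immediate consequence of Theorem~\ref{PencilAny} under the standing convention (stated just before the theorem) that all coalescings are generic CIs, and is followed only by remarks. Your write-up supplies exactly the parity-combinatorics that the paper leaves implicit: from the formula $D_{11}=(-1)^{d_1}$, $D_{ii}=(-1)^{d_{i-1}+d_i}$, $D_{nn}=(-1)^{d_{n-1}}$, introducing $e_i=d_i\bmod 2$ with the boundary convention $e_0=e_n=0$ and reading the $-1$'s of $D$ as flips of the parity sequence is the clean way to extract the conclusion. One small point worth making explicit: the matrix $D$ is independent of which continuous $B$-orthogonal diagonalizer $V$ is chosen along $\Gamma$, since any two differ by a constant sign matrix (Corollary~\ref{UniqueV}) which commutes with $D$; this justifies matching your $D$ with the one produced by Theorem~\ref{PencilAny}.
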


\begin{rems}
Some comments are in order.
\begin{itemize}
\item[(i)]
Relative to generic CIs, suppose that from Theorem \ref{SuffCI} 
we have a $D$ with $D_{11}=-1=D_{44}$, all other $D_{ii}$'s being $1$.  Then,
we expect that inside the region encircled by $\Gamma$, the pairs
$(\lambda_1,\lambda_2)$, $(\lambda_2,\lambda_3)$, and $(\lambda_3,\lambda_4)$,
have coalesced.  Moreover, relative to generic CIs, and in the notation of
Theorem \ref{SuffCI}, we can say that there is an odd number of CIs points for
$\lambda_i$ and $\lambda_{i+1}$ inside the region encircled by $\Gamma$.
\item[(ii)]
Theorem \ref{SuffCI} cannot distinguish whether, inside $\Gamma$, some pair of eigenvalues
coalesced an even number of times or not at all.
\end{itemize}
\end{rems}

%\subsection{Tridiagonal pencils}
A final remark pertains to the case when $A$ and $B$ are both {\bf tridiagonal}.  This case is
quite difficult to handle for our algorithms of Section \ref{algos} that locate coalescing eigenvalues.
The reasons for the difficulties have been
already explained in our work on the symmetric eigenproblem; see the discussion on {\tt Veering} and
{\tt mingap} in \cite[Section 1.2]{DPP2}.  And, because of this,
in \cite[Section 2.3]{DPP2} we devised ad-hoc techniques for the tridiagonal case, techniques based upon 
the fact that for a symmetric tridiagonal matrix 
$A=\smat{a_1 & b_2 & & & \\ b_2 & a_2 & b_3 & & \\ & \sddots & \sddots & \sddots & \\
& & & b_n & a_n}$, a necessary condition to have repeated eigenvalues is that $b_i=0$, for some
$i$.  Unfortunately, in the case of a tridiagonal pencil $(A,B)$, there is no such simple necessary
condition that has to hold for having repeated eigenvalues.  For these reasons, the case of
$A$ and $B$ tridiagonal is left open for future study, and the results in Section \ref{NumExs}
do not include the tridiagonal case.

\vspace{.5cm}

\section{Algorithms to locate coalescing eigenvalues}\label{algos}

The procedure we implemented to locate coalescing  generalized eigenvalues is based on Theorem \ref{SuffCI}, 
and on the smooth generalized eigendecomposition 
$A(t)V(t)=B(t)V(t)\Lambda(t)$ along 1-d paths, as stated in Theorem \ref{Smoothness}.
Our goal is to obtain a sampling of these smooth $V$ and $\Lambda$ at some values of $t$.
Given a 1-parameter pencil $(A(t),B(t))$, for $t\in [0,1]$,  with $A=A^T\in\cont^k([0,1],\Rnxn)$, and
$B=B^T\in\cont^k([0,1],\Rnxn)\succ 0$,
we can assume that the eigenvalues are distinct for all $t\in [0,1]$, and 
$\lambda_1(t)>\lambda_2(t)>\ldots>\lambda_n(t)$. 

To compute $\Lambda={\rm diag}(\lambda_1,\lambda_2,\ldots,\lambda_n)$ and $V$   
we used a continuation procedure of predictor-corrector type, similar to the one 
%we adapted to this generalized eigenproblem case, the codes we
developed in \cite{DPP2} to obtain a sampling of the smooth ordered Schur decomposition
for symmetric 1-d functions.
For completness, we briefly describe here the step from a point $t_j$ to the new point $t_{j+1}$
of the new procedure, further remarking on the differences between the present procedure
and the one in \cite{DPP2}, to which we refer for a discussion of some algorithmic choices.

Given an ordered decomposition at $t_j$: $A(t_{j})V(t_j)= B(t_j) V(t_j) \Lambda(t_j)$
and a stepsize $h$,  we want the decomposition 
at $t_{j+1}=t_j+h$: $A(t_{j+1}) V(t_{j+1}) =B(t_{j+1}) V(t_{j+1}) \Lambda(t_{j+1})$, where
the factors $V(t_{j+1})$ and $\Lambda(t_{j+1})$ lie along the smooth path from $t_j$ to $t_{j+1}$. 
To get $\Lambda(t_{j+1})$ is easy to do with canned software, like {\tt eig} in {\tt Matlab},
since the eigenvalues are distinct, so we will keep them ordered.
Further, a $B$-orthogonal matrix $V_{j+1}$ such that $A(t_{j+1}) V_{j+1} =B(t_{j+1}) V_{j+1} \Lambda(t_{j+1})$ 
can be also obtained by standard linear algebra software, like the {\tt eig Matlab} command, and re-ordering. 
Then, recalling Corollary \ref{UniqueV}, we know that $V(t_{j+1})=V_{j+1}S$, where $S$ is a  
sign matrix, $S=\diag(s_1,\ldots, s_n)$, $s_i=\pm 1, i=1,\ldots,n$,
that is $V(t_{j+1})$ can be recovered by correcting the signs of the columns of $V_{j+1}$.  
Specifically, by enforcing minimum variation with respect to a suitably predicted factor $V^{pred}$, we set $S$
equal to the sign matrix   which minimizes $\| SV_{j+1}^TB(t_{j+1})V^{pred}-I\|_F$. 

Despite the overall simplicity of the basic step we just described,
if the stepsize $h$ is too large with respect to the variation of the factors, predicting the 
correct signs of the eigenvectors to follow the correct path may be a hard task.
This  difficulty is tipically encountered  when there is a pair  
of close eigenvalues, as happens  in presence 
of a {\tt veering} phenomenon. In this case smoothness could be mantained only 
by using very small stepsizes, being the variation of the eigenvectors inversely
proportional to the difference between eigenvalues (see the differential equations \eqref{DEforV}).  
Therefore we proceed in two different ways, depending
on the distance between consecutive eigenvalues. 
We say that a pair of eigenvalues ($\lambda_i$, $\lambda_{i+1}$) 
{\it is  close to veering} at $t_{j}+h$ if the following condition  holds:
\begin{equation*}%\label{vicini}
\frac{|\lambda_{i+1}(t_{j }+h)-\lambda_{i}(t_{j}+h)|}
{|\lambda_{i}(t_{j}+h)|+1}< \mathtt{toldist}\footnote{In our experiments we
have used $\mathtt{toldist}=10^6 \mathtt{eps}\approx 10^{-10}$}
% \,,  \quad
%\frac{|\lambda^{pred}_{i+1}-\lambda^{pred}_{i}|}
%{|\lambda^{pred}_{i}|+1}< \mathtt{toldist};
\end{equation*}
otherwise, the eigenvalues are considered well separated.  At the starting point $t_j$,
eigenvalues are assumed to be well separated.\\[1ex]
{\it Case 1. Some pair of eigenvalues is close to veering at} $t_{j}+h$.\\
In practice during a veering  close eigenvalues may become numerically undistinguishable,
and the corresponding $B$-orthogonal eigenvectors change very rapidly  
within a very small interval, out of which the eigenvalues are again well separated. 
To overcome this critical veering zone, we proceed 
%as in \cite{DGP} and \cite{DPP2}
by  computing a  smooth block-diagonal eigendecomposition (see Theorem \ref{BlockDiag}):
\begin{equation} \label{BlockDiagV}
V_B^T(t)A(t)V_B(t)=\Lambda_B(t)=\diag(\Lambda_1(t),,...,\Lambda_p(t)),
  ~~~~ V_B^T(t)B(t)V_B(t)=I,\;~~~~ t\ge t_j , 
\end{equation}
where  close eigenvalues are grouped into one block, 
so that the eigenvalues of each $\Lambda_i$ are well separated from the others.  
We do not expect, nor consider, the nongeneric case of three
or more close eigenvalues, hence each $\Lambda_i(t)$  is either an eigenvalue or a $ 2\times 2$ block.
Using the Cholesky factorization of $B=LL^T$,  we first re-write \eqref{BlockDiagV} as follows:
\begin{equation*}  
\underbrace{  V_B^T L}_{Q_B^T} \,\underbrace{ L^{-1} A L^{-T}}_{\tilde A}\, \underbrace{ L^T V_B}_{Q_B} =\Lambda_B , 
\quad\quad
   ~~~~ \underbrace{V_B^T L}_{Q_B^T}  \, \underbrace{  L^T V_B}_{Q_B} =I,\; \quad t\ge t_j. 
 \end{equation*}
Then to compute the smooth orthogonal transformation $Q_B$  and block-diagonal $\Lambda_B$,
we  use a procedure for the continuation of invariant subspaces, 
which is based on Riccati transformations (see \cite{DP} and \cite{DPP2} for details of this technique).
Starting at $t_j$, we continue with this standard  block eigendecomposition  
until all eigenvalues are again well separated; this
happens at some value $t_f$, and we set $t_{j+1}=t_f$. Then 
$V_B(t_{j+1})=L^{-T}(t_{j+1}) Q_B(t_{j+1})$.\\
A key issue is how to recover the complete smooth eigendecomposition at $t_{j+1}$. 
Indeed Theorem \ref{BlockDiag} guarantees the existence of  decomposition \eqref{BlockDiagV} but not its uniqueness,
as can be easily verified by rotating the columns of $V_B$ - or $Q_B$ - corresponding to a $2 \times 2$ diagonal block. 
In \cite{DPP2}, to which we refer for the details, we show 
how these subspaces  can be rotated  to obtain an accurate predicted factor $V^{pred}$ 
which allows to correct the signs of $V_{j+1}$'s columns, 
and continue the complete smooth  eigen-decompositon at $t_{j+1}+h$. \\[1ex]
{\it Case 2. All eigenvalues are well separated}.\\
In this case, through our predictor-corrector strategy the stepsize is adapted based on both 
eigenvalues and eigenvectors variations. 
The following variation parameters 
\begin{equation}\label{rholv}\begin{split}
 \rho_{\lambda} & = \max_i   \frac{|\lambda_{i}(t_{j+1})-\lambda_{i}^{pred}|} {|\lambda_{i}(t_{j+1})|+1} \quad {\rm and} \\
\rho_V & =\frac{ \bigl( {\rm{tr}}\left[ (V(t_{j+1})-V^{pred})^TB(t_{j+1})(V(t_{j+1})-V^{pred}) \right] \bigr)^{1/2}}{\sqrt{n}}
\end{split}\end{equation}
are used  both  to update the stepsize and to   accept or reject a step  (see Steps 4 and 5 in 
Algorithm \ref{sfs} below).
Accurate predictors  are hence mandatory for the efficiency of the overall  procedure.  
We obtain them by taking an Euler step: 
$$\Lambda^{pred}=\Lambda(t_j)+h\,  {\dot \Lambda}_j , \qquad   V^{pred}=V(t_j)+h \, {\dot V}_j,$$
 in the differential equations \eqref{DEforV}, where the  derivatives  $\dot \Lambda(t_j)\simeq  \dot \Lambda_j$ 
 and  $\dot V(t_j)\simeq \dot V_j$  are approximated
by replacing ${\dot A}(t_j)$ with $(A(t_{j+1})-A(t_j))/h$  and ${\dot B}(t_j)$ with $(B(t_{j+1})-B(t_j))/h$. 
Setting  $A_V= V(t_j)^T A(t_{j+1}) V(t_j)$ and   $B_V=V(t_j)^T B(t_{j+1}) V(t_j)$ we have
\begin{equation}\label{predittori}
\Lambda^{pred}= \diag(A_V)- \Lambda(t_j) (\diag (B_V) -I), \quad 
V^{pred}=V(t_j)(I+P+H),\end{equation}
 where $P=(I-B_V)/2$, and  $H$ is the skew-symmetric matrix such that
\medskip
$$H_{ik}=-  H_{ki}=\frac{[A_V]_{ik}}{\lambda_i-\lambda_k} - 
\frac{\lambda_i+\lambda_k}{\lambda_i-\lambda_k} \,\frac{ [B_V]_{ik}}{2}, \;\; {\rm for} \; i<k,
\;\;\;\;\;\;H_{ii}=0\;\; {\rm for} \;i=1,\dots,n.$$
We remark that $\rho_V$, $\Lambda^{pred}$ and $V^{pred}$ reduce to the corresponding 
quantities we used in \cite{DPP2} %to control the variation of the eigenvectors 
for the standard symmetric eigenvalue problem, where $B=I$ and $V$  is orthogonal.

Further, after we reached $t_{j+1}$ with a successful step, we compute 
the following  predicted eigenvalues of secant type: 
\begin{equation*} \label{secant}
 \lambda_i^{sec}=\lambda_i(t_{j+1})+h \dot{\lambda}_i^{sec}= \lambda_i(t_{j+1})+
h \frac{ \lambda_i(t_{j+1})-\lambda_i(t_j)} { t_{j+1}- t_j}, \;\;\; i=1,\ldots,n.
\end{equation*}
Step 7 in Algorithm \ref{sfs} uses these secant prediction, and its rationale is
that if $\lambda^{sec}_i<\lambda^{sec}_{i+1}$ for some $i$ (recall that we have to
obtain $\lambda_i>\lambda_{i+1}$), then 
the new step $t_{j+1}+h$ is likely to fail, and therefore $h$ will be safely reduced as in \eqref{reduceh}.

\algo{sfs}{Predictor-Corrector step for well separated eigenvalues} {
Input:  $t_j$,  an ordered  decomposition  $A(t_{j})V(t_j)= B(t_j) V(t_j) \Lambda(t_j)$, a  stepsize $h$.\\[1ex]
Output:  $t_{j+1}$,  the smooth  decomposition  $A(t_{j+1})V(t_{j+1})= B(t_{j+1}) V(t_{j+1})   \Lambda(t_{j+1})$,\\ 
\hspace*{1.4cm}
an updated stepsize $h$.\\[1ex]
1. Set $t_{j+1}=t_j+h$,  and compute ~~$V^{pred}$ ~and~ 
 $\Lambda^{pred}$ by \eqref{predittori};\\
2. Compute an  ordered decomposition 
$A(t_{j+1})V_{j+1}= B(t_{j+1}) V_{j+1}   \Lambda(t_{j+1})$; \\
3.  Find the sign  matrix $S$ to correct eigenvectors, and set $V(t_{j+1}) = V_{j+1} S$;\\
4.   Compute   $\rho_{\lambda}$ and $\rho_V$ from \eqref{rholv}, set $\rho=
 \max \{\rho_{\lambda}, \rho_V \}/{\tt tolstep}$ (in our experiments, we
have used $\mathtt{tolstep}=  10^{-2}$), 
and update $h=h/\rho;$%, where:
\vskip 2pt
5. If $\rho\le 1.5$, accept the step; \\
\hspace*{10pt} otherwise declare {\it failure}, go to step 1,  and retry  with the new (smaller) $h$;\\
6.   Compute   $\displaystyle  \dot{\lambda}_i^{sec} =   \frac{ \lambda_i(t_{j+1})-\lambda_i(t_j)} { t_{j+1}- t_j}\;$
~ and ~
$\;\lambda_i^{sec} = \lambda_i(t_{j+1})+h    \dot{\lambda}_i^{sec},$ ~ for $i=1,\ldots,n;$    \\
7. If $\lambda^{sec}_i<\lambda^{sec}_{i+1}$ for some $i=1,\ldots,n-1$, reduce $h$ as in
\begin{equation}\label{reduceh}
 h=0.9 \, h \min \left \{ 
 \frac{ \lambda_i(t_{j+1})-\lambda_{i+1}(t_{j+1})} {\dot{\lambda}^{sec}_{i+1}-\dot{\lambda}^{sec}_{i}}, ~
{\rm for} ~\lambda^{sec}_i<\lambda^{sec}_{i+1}, ~i=1,\dots, n-1 \right \}.
\end{equation}
}

\begin{rem}\label{WhyNotCholAndQR}
Observe that smooth factors $\Lambda$ and $V$ could be  obtained also
via a smooth Schur decomposition $\tilde A(t)=Q(t)\Lambda(t) Q^T(t)$ of the symmetric matrix
$\tilde A=L^{-1}AL^{-T}$,  with  $B=LL^T$, by using the procedure developed in \cite{DPP2}
and setting $V=L^TQ$.
However, Algorithm \ref{sfs} which is tailored to the original generalized eigenproblem results
much more efficient, for general $A$ and $B$;
e.g., in our experiments in {\tt Matlab} the main cost is clearly in step 2 of the algorithm,
which we resolve with a call to {\tt eig}, and using {\tt eig}$(A,B)$ costs less than half of
the execution time encountered forming $\tilde A$ and using {\tt eig}$(\tilde A)$.
\end{rem}
 
\vspace{.5cm}

\section{Random matrix ensemble and experiments}\label{NumExs}

Making use of the algorithms presented in Section \ref{algos}, we have performed a numerical study on
coalescence of eigenvalues for parametric eigenproblems of interest in computational mechanics. 
In this Section we report on these experiments.

We considered pencils $A-\lambda B$ whose matrices belong to a random matrix ensemble called ``SG$^+$''. 
This ensemble has been introduced in \cite{Soize3} for modelling uncertainties in computational mechanics. 
Matrices in this ensemble are characterized by a property called \emph{dispersion}, which is controlled by 
a dispersion parameter $\delta$ that must satisfy $0<\delta<\sqrt{(n+1)(n+5)^{-1}}$. Moreover we 
contemplated also banded SG$^+$ matrices, i.e. matrices in SG$^+$ ``truncated" so to have bandwidth 
$b=1,\ldots,n-1$, where $b=1$ means tridiagonal and $b=n-1$ means ``full''. Our goal is to investigate 
the effect of bandwidth and dispersion on how the number of conical intersections varies as we increase 
the dimension of the matrices.

We now illustrate in detail how our random matrix functions are defined. 
First, given integers $n\ge 2$ and $b=1,\ldots,n-1$, and dispersion parameter $0<\delta<\sqrt{(n+1)(n+5)^{-1}}$, 
we construct the following $n\times n$ matrices (only the non-zero entries are explicitly defined): 

\medskip
\begin{algorithmic}
\For {\text{all} $i,j$ such that $0<i-j\le b$ and $k=1,2,3,4$}
	\State $(L_{A,k})_{ij}\gets \sigma_n u$, where $\sigma_n=\delta(n+1)^{-1/2}$ and $u\in{N(0,1)}$
	\State $(L_{B,k})_{ij}\gets \sigma_n u$, with $\sigma_n$ and $u$ as above
\EndFor
\For {$i=1,\ldots,n$}
	\State $(D_A)_{ii}\gets \sigma_n\sqrt{2v_i}$, where $\sigma_n=\delta(n+1)^{-1/2}$, $v_i\in\Gamma(a_i,1)$ 
	and $a_i=\frac{n+1}{2\delta^2}+\frac{1-i}{2}$
	\State $(D_B)_{ii}\gets \sigma_n\sqrt{2v_i}$, with $\sigma_n$ and $v_i$ as above
\EndFor
\end{algorithmic}
\medskip

\noindent where $N(0,1)$ is the normal distribution with zero mean and variance 1, while $\Gamma(a_i,1)$ 
is the gamma distribution with shape $a_i$ and rate 1. Then, for all $(x,y)$ in $\R^2$, we define the 
following matrix functions:

\begin{equation*}
\begin{split}
 L_A(x,y)\coloneqq &\,\cos(x)L_{A,1}+\sin(x)L_{A,2}+\cos(y)L_{A,3}+\sin(y)L_{A,4}+D_A,\\
 L_B(x,y)\coloneqq &\,\cos(x)L_{B,1}+\sin(x)L_{B,2}+\cos(y)L_{B,3}+\sin(y)L_{B,4}+D_B, \\
 A(x,y)\coloneqq &\,L_A(x,y) {L_A(x,y)}^T, \\
 B(x,y)\coloneqq &\,L_B(x,y) {L_B(x,y)}^T.
\end{split}
\end{equation*}

We point out that:
\begin{enumerate}[a)]
 \item all matrices $L_{A,k}$ and $L_{B,k}$ are strictly lower triangular and have bandwidth $b$, while 
 $D_A$ and $D_B$ are diagonal; therefore, both $A(x,y)$ and $B(x,y)$ have bandwidth $b$;
\item $A(x,y)=A(x,y)^T$ and $B(x,y)=B(x,y)^T$ are positive definite for all $(x,y)$;
\item the nontrivial entries of all matrices $L_{A,k}$, $L_{B,k}$, $D_A$ and $D_B$ are independent 
random variables; more precisely: $A(x,y)\in$ SG$^+$ for all $(x,y)$, and the same is true for $B(x,y)$; 
 \item the probability density function of the diagonal entries of $D_A$ and $D_B$ matrix depends on 
 their position along the diagonal.
\end{enumerate}

In our experiments, we have fixed five values of the dispersion parameter: 
$\delta=0.05$, $0.25$, $0.45$, $0.65$, $0.85$, and considered four possible bandwidths 
$b=3,4,5,\mathrm{full}$. For each combination of $\delta$ and $b$, and for dimensions 
$n=50, 60, \ldots, 120$, we have constructed 10 realizations of matrix pencils 
$A(x,y)-\lambda B(x,y)$ and performed a search for conical intersections for the pencil 
over the domain $\Omega=[0, \pi]\times [0,2\pi]$.
The detection strategy consisted of subdividing the domain $\Omega$ into $64\times 128$
square boxes and computing a smooth generalized eigendecomposition of the pencil around 
the perimeter of each box. The presence of conical intersections inside each box is betrayed
by sign changes of the columns of the smooth $B$-orthogonal matrix that diagonalizes 
the pencil, see Theorem \ref{SuffCI} and the subsequent remarks.

Our pourpose was to fit the data with a power law
\begin{equation}\label{eq:powerlaw}
 \# \mathrm{\ of\ CIs} = c\, \mathrm{dimension\,}^p,
\end{equation}
averaging the number of conical intersections over the 10 realizations. 
The outcome of the experiments is illustrated in Figure \ref{fig:bestfit} and 
Table \ref{tab:data}. Figure \ref{fig:bestfit} shows the superposition of the 
20 linear regression lines obtained by computing a least squares best fit over the 
logarithm of the data, so that $p$ and $c$ in \eqref{eq:powerlaw} represent, respectively, 
slope and intercept of the lines. The Figure reveals 4 groups of 5 lines each, where lines 
in the same group share the value of the bandwidth. It is evident that the dispersion parameter 
$\delta$ has a negligible effect on the exponent $p$, and mostly also on the factor $c$ 
(with the exception of the full bandwidth case). In contrast, bandwidth has a significant effect 
on the exponent $p$, that increases from $p\approx 2$ of the full bandwidth case to 
$p\approx 2.6$ of the heptadiagonal case. A similar study was conducted in \cite{DPP2} for 
the GOE (Gaussian Orthogonal Ensemble) model. A look at Table 4 in that work shows, for the GOE 
model, a faster growth of the exponent $p$ as the bandwidth is decreased, compared to what we have 
observed here for the SG$^+$ model. For convenience, below we report the values of $p$ obtained for 
the two models (for the SG$^+$ model we average over all values of $\delta$, since variations for 
different values of $\delta$ are negligible):

\begin{center}
\medskip
 \begin{tabular}{c|c|c}
 bandwidth & SG$^+$ & GOE \\ \hline
full & 2.01 & 2.00 \\
5 & 2.46 & 2.55 \\
4 & 2.54 & 2.66 \\
3 & 2.60 & 2.73 \\
 \end{tabular}
 \medskip
\end{center}

Table \ref{tab:data} shows, as an example, a synopsis of the outcome of our experiments for the case $\delta=0.45$. 
The Table reports on the number of conical intersections that have been detected and on the results of the least 
squares best fits.

Finally, in Figure \ref{fig:elaptime} we give an account of the computational time required by our experiments. 
The barplot indicates, for all values of bandwidth and dimension we have considered, the average elapsed time 
of each computation (averaged over all realizations and all values of $\delta$). The data are normalized 
with respect to the computation that required the longest time, which corresponds to the heptadiagonal 
case and largest dimension $n=120$ and was about 12.5 hours. By contrast, the fastest computation 
corresponds to the full bandwidth case and smallest dimension $n=50$ and was about 15 minutes. 
The figure clearly indicates that the computational effort is directly proportional to the number 
of conical intersections. This fact comes with no surprise as, in the vicinity of each conical 
intersection, the eigenvectors exhibit rapid variations that require severe restrictions on the 
stepsize for our continuation algorithm.

We were not able to perform experiments for the tridiagonal and pentadiagonal cases. 
This is due to the fact that, as the bandwidth gets critically small and the dimension 
sufficiently large, a significant amount of sharp variations of the eigenvectors occur 
within intervals of size smaller than machine precision, ruling out our (but, actually, any) 
numerical continuation solver. These difficulties where already encountered (and explained) 
in \cite{DPP2} (see Remark 3.1 therein). See also the considerations at the end of Section 
\ref{CIs} of this work.

\begin{table}
\caption{The table shows (left to right): bandwidth, dimension of the problem, 
number of conical intersections detected (average over 10 realizations from the SG$^+$ ensemble), 
outcome of the log-log linear least squares regression (including the root mean square deviation). 
The data refer to the case $\delta=0.45$.}
\label{tab:data}
\begin{tabular}{|c||c|c||c|}
\cline{1-4}
{\rm bandwidth} & {\rm dimension} & {\rm avg \# CIs} & {\rm power law} \\
\cline{1-4}
\multirow{8}{*}{3}
& 50 & 3340 & \multirow{3}{*}{$p=2.59$} \\
& 60 & 5318 & \\
& 70 & 7882 & \multirow{4}{*}{$c=0.13$} \\
& 80 & 11269 & \\
& 90 & 15155 & \\
& 100 & 20100 & \multirow{3}{*}{rmsd = 4.59$\times 10^{-3}$} \\
& 110 & 25460 & \\
& 120 & 32013 & \\
\cline{1-4}
\multirow{8}{*}{4}
& 50 & 2651 & \multirow{3}{*}{$p=2.55$} \\
& 60 & 4312 & \\
& 70 & 6304 & \multirow{4}{*}{$c=0.12$} \\
& 80 & 8803 & \\
& 90 & 11964 & \\
& 100 & 15350 & \multirow{3}{*}{rmsd = 1.04$\times 10^{-2}$} \\
& 110 & 20010 & \\
& 120 & 25269 & \\
\cline{1-4}
\multirow{8}{*}{5}
& 50 & 2456 & \multirow{3}{*}{$p=2.45$} \\
& 60 & 3785 & \\
& 70 & 5508 & \multirow{4}{*}{$c=0.17$} \\
& 80 & 7679 & \\
& 90 & 10040 & \\
& 100 & 13128 & \multirow{3}{*}{rmsd = 1.07$\times 10^{-2}$} \\
& 110 & 17010 & \\
& 120 & 20797 & \\
\cline{1-4}
\multirow{8}{*}{full}
& 50 & 1916 & \multirow{3}{*}{$p=2.02$} \\
& 60 & 2827 & \\
& 70 & 3799 & \multirow{4}{*}{$c=0.71$} \\
& 80 & 4981 & \\
& 90 & 6422 & \\
& 100 & 7853 & \multirow{3}{*}{rmsd = 7.34$\times 10^{-3}$} \\
& 110 & 9517 & \\
& 120 & 11282 & \\
\cline{1-4}
\end{tabular}
\end{table}

\begin{figure}[h]
\begin{center}
\includegraphics[width=.8\textwidth]{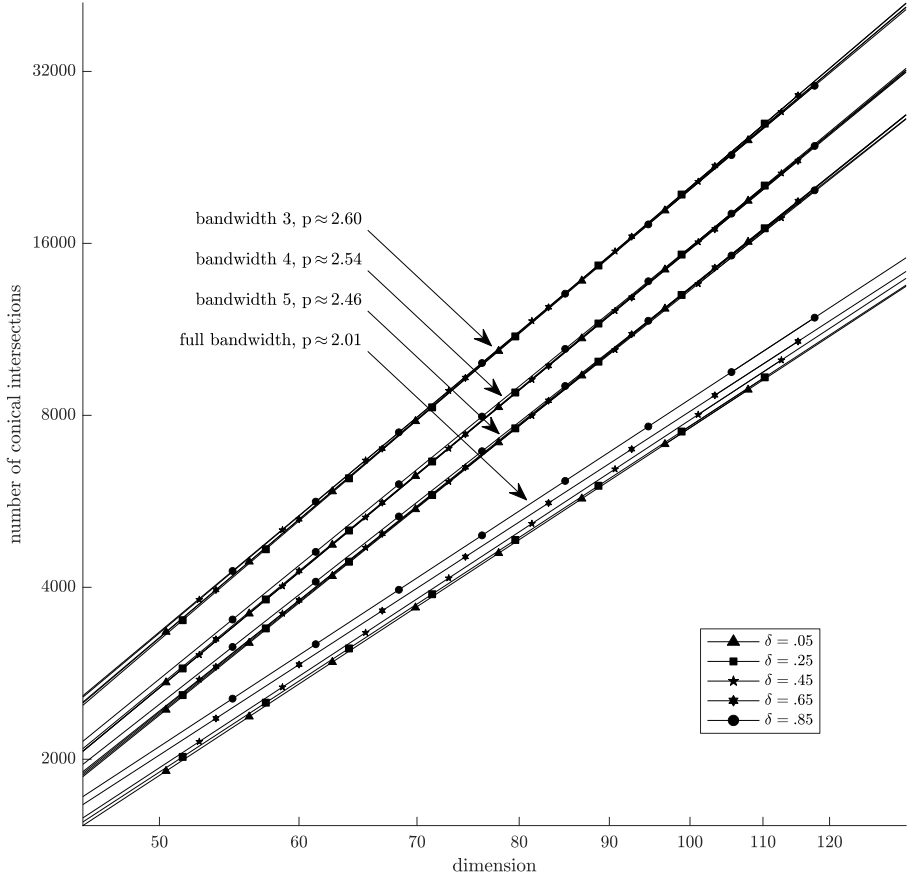}
\caption{For each value of $b=3, 4, 5, \mathrm{full}$ and $\delta=0.05, 0.25, 0.45, 0.65, 0.85$, 
we have performed a log-log linear least squares regression. The Figure shows the best fit lines 
for all combinations of $b$ and $\delta$, and also the average exponent (slope after the log-log 
transformation) $p$ for each value of $b$.}
\label{fig:bestfit}
\end{center}
\end{figure}

\begin{figure}[h]
\begin{center}
\includegraphics[width=.666\textwidth]{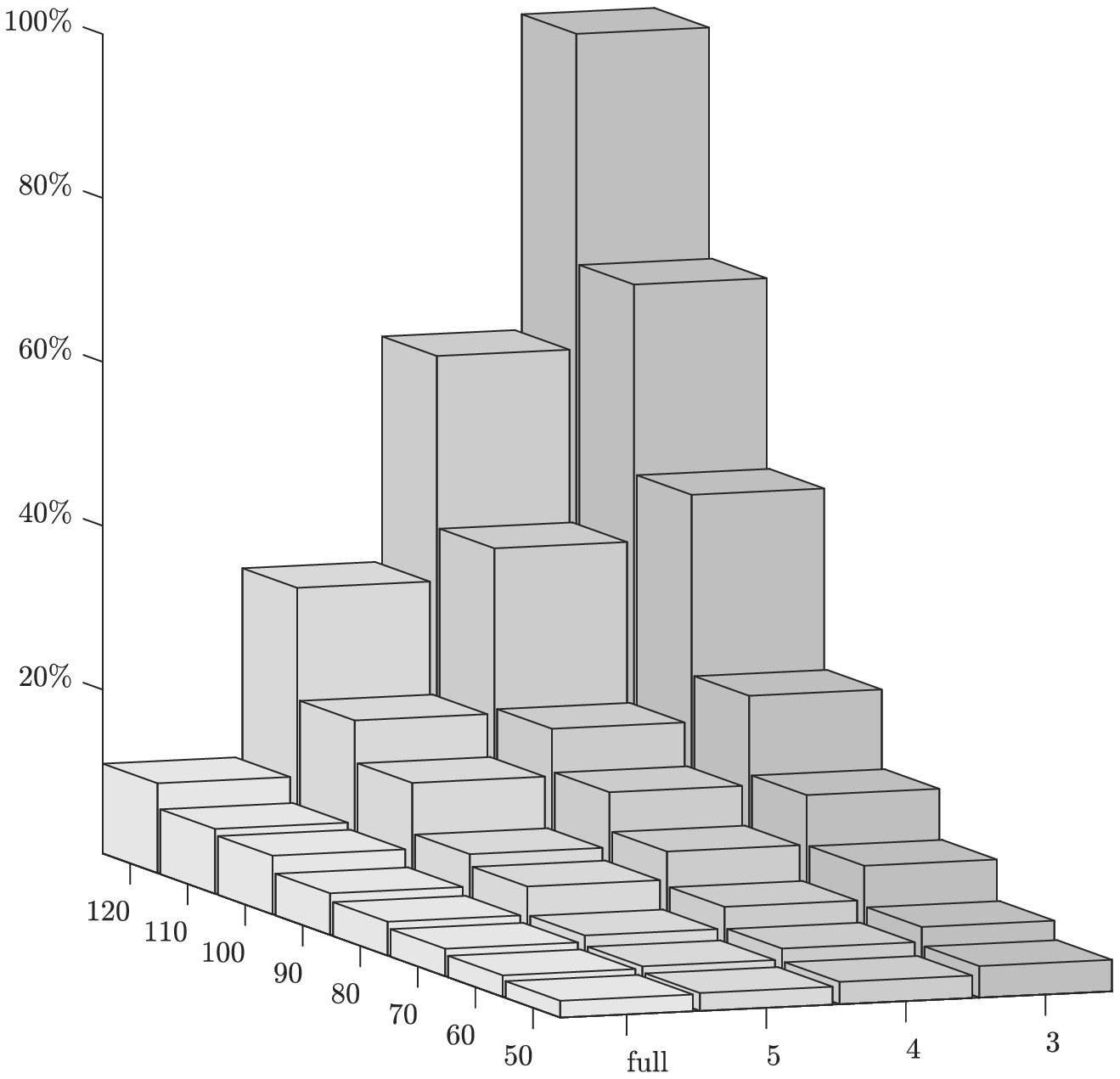}
\caption{default}
\label{fig:elaptime}
\end{center}
\end{figure}

All computations have been performed on the ``Partnership for an Advanced Computing Environment'' 
(PACE), the high performance computing infrastructure at the Georgia Institute of Technology, Atlanta, Georgia, USA.

\vspace{.5cm}

% \section{Experimental results}\label{NumResults}
% 
% 
% \vspace{.5cm}
% 
\section{Conclusions}\label{Conclusions}
In this work we have considered symmetric positive definite pencils, 
$A(x)-\lambda B(x)$,
where $A$ and $B$ are symmetric matrix valued functions in $\Rnxn$,
smoothly depending on parameters $x$, and 
$B$ is also positive definite.  We gave general smoothness results for the
one parameter and two-parameter case, and gave results to characterize the
(generic) case of coalescing eigenvalues in the two-parameter case, the so-called
conical intersections.
We further presented, justified, and implemented, new algorithms to locate
parameter values where there are conical intersections.  These algorithms were
used to perform a statistical study of the number of conical intersections for
pencils of several bandwidths.

Several issues are still requiring a more ad-hoc study.  For example,
the case of both $A$ and $B$ tridiagonal (e.g., see \cite{Wilkinson}) is still
not resolved in a satisfactory way for the generalized eigenvalue problem, and
perhaps the technique of \cite{LLZ} can be adapted to the parameter 
dependent case examined by us.  But also other problems remain to be examined,
especially from the algorithmic point of view, like the case of large number of equal
eigenvalues seen in some structural engineering works (e.g., see \cite{SWF:WavePerLatt}).

\vspace{.5cm}

\end{document}